\newtheorem{thm}{Theorem}[section]
\newtheorem{prop}[thm]{Proposition}
\newtheorem{lem}[thm]{Lemma}
\newtheorem{conj}[thm]{Conjecture}
\newtheorem{quest}[thm]{Question}
\newtheorem*{thm1}{Theorem A}
\theoremstyle{definition}
\newtheorem{defn}[thm]{Definition}
\theoremstyle{remark}
\newtheorem{rem}[thm]{Remark}
\let\c@equation\c@thm
\numberwithin{equation}{section}
\title[]{Nonnegative Ricci curvature, metric cones,\\ and virtual abelianness}
\author[]{Jiayin Pan}
\address[]{Fields Institute for Research in Mathematical Sciences, Toronto, Ontario, Canada.}
\email{jypan10@gmail.com}
\begin{document}
	
	\begin{abstract}
		Let $M$ be an open $n$-manifold with nonnegative Ricci curvature. We prove that if its escape rate is not $1/2$ and its Riemannian universal cover is conic at infinity, that is, every asymptotic cone $(Y,y)$ of the universal cover is a metric cone with vertex $y$, then $\pi_1(M)$ contains an abelian subgroup of finite index. If in addition the universal cover has Euclidean volume growth of constant at least $L$, we can further bound the index by a constant $C(n,L)$. 
	\end{abstract}
	
	\maketitle

We study the virtual abelianness/nilpotency of fundamental groups of open manifolds with $\mathrm{Ric}\ge 0$. According to the work of Kapovitch-Wilking \cite{KW}, these fundamental groups always have nilpotent subgroups with index at most $C(n)$ (also see \cite{Mil,Gro_poly}). In general, these fundamental groups may not contain any abelian subgroups with finite index, because Wei has constructed examples with torsion-free nilpotent fundamental groups \cite{Wei}. This is different from manifolds with $\mathrm{sec}\ge 0$, whose fundamental groups are always virtually abelian \cite{CG_soul}.

A question raised from here is, for an open manifold $M$ with $\mathrm{Ric}\ge0$, on what conditions is $\pi_1(M)$ virtually abelian? To answer this question, one naturally looks for indications from the geometry of nonnegative sectional curvature. Ideally, if the manifold $M$ fulfills some geometric conditions modeled on nonnegative sectional curvature, even in a much weaker form, then $\pi_1(M)$ may turn out to be virtually abelian. In other words, when $\pi_1(M)$ is not virtually abelian, some aspects of $M$ should be drastically different from the geometry of nonnegative sectional curvature.

We have explored this direction in \cite{Pan_es0,Pan_esgap}, from the viewpoint of escape rate. Recall that each element $\gamma$ in $\pi_1(M,p)$ can be represented by a geodesic loop at $p$, denoted by $c_\gamma$, with the minimal length in its homotopy class. If $M$ has $\mathrm{sec}\ge 0$, then all these representing loops must stay in a bounded ball; however, this property in general does not hold for nonnegative Ricci curvature. The escape rate measures how fast these loops escape from bounded balls:
$$E(M,p):=\limsup_{|\gamma|\to\infty}\dfrac{d_H(p,c_\gamma)}{|\gamma|},$$
where $|\gamma|$ is the length of $c_\gamma$ and $d_H$ is the Hausdorff distance. As the main result of \cite{Pan_esgap}, if $M$ satisfies $\mathrm{Ric}\ge 0$ and $E(M,p)\le \epsilon(n)$, then $\pi_1(M)$ is virtually abelian. We also mention that from the definition, the escape rate always takes values between $0$ and $1/2$. To the author's best knowledge, all known examples of open manifolds with $\mathrm{Ric}\ge 0$ have escape rate strictly less than $1/2$.

In this paper, we study how virtual abelianness/nilpotency is related to conic asymptotic geometry. Recall that an asymptotic cone of $M$ is the pointed Gromov-Hausdorff limit of a sequence 
$$(r_i^{-1}M,p)\overset{GH}\longrightarrow (Y,y),$$
where $r_i\to\infty$. We say that $M$ is \textit{conic at infinity}, if any asymptotic cone $(Y,y)$ of $M$ is a metric cone with vertex $y$. We do not assume the asymptotic cone to be unique in this definition. If the manifold has $\mathrm{sec}\ge 0$, then its asymptotic cone $(Y,y)$ is unique as a metric cone with vertex $y$. If the manifold has $\mathrm{Ric}\ge 0$ and Euclidean volume growth, then it is conic at infinity \cite{CC96}.
	
We state the main result of this paper.	
	
\begin{thm1}
	Let $(M,p)$ be an open $n$-manifold with $\mathrm{Ric}\ge 0$ and $E(M,p)\not=1/2$.\\
	(1) If its Riemannian universal cover is conic at infinity, then $\pi_1(M)$ is virtually abelian.\\
	(2) If its Riemannian universal cover has Euclidean volume growth of constant at least $L$, then $\pi_1(M)$ has an abelian subgroup of index at most $C(n,L)$, a constant only depending on $n$ and $L$.
\end{thm1}	

The contra-positive of Theorem A(1) shows that the nilpotency of $\pi_1(M)$ leads to asymptotic geometry of the universal cover that is very different from the one with nonnegative sectional curvature (also see Conjecture \ref{conj_nil_dim}).

Before proceeding further, we make some comments about the conditions in Theorem A.

We emphasize that in Theorem A(1), the conic at infinity condition is imposed on the Riemannian universal cover of $M$, not $M$ itself. In fact, Wei's example \cite{Wei} has the half-line $([0,\infty),0)$ as the unique asymptotic cone of $M$. Therefore, in general, virtual abelianness does not hold when $M$ is conic at infinity. 

Regarding the condition $E(M,p)\not=1/2$ in Theorem A, it is unclear to the author whether it can be dropped. On the one hand, at present we do not know any examples with $\mathrm{Ric}\ge 0$ and $E(M,p)={1}/{2}$. On the other hand, we are unable to show $E(M,p)\not=1/2$ even when $\pi_1(M)=\mathbb{Z}$.

\begin{quest}\label{quest_1/2}
	Let $(M,p)$ be an open $n$-manifold with $\mathrm{Ric}\ge 0$ and a finitely generated fundamental group. Is it true that $E(M,p)<1/2$?
\end{quest}

The converse of Question \ref{quest_1/2} is known to be true: if $E(M,p)<1/2$, then $\pi_1(M)$ is finitely generated (see \cite[Lemma 5]{Sor}). We believe that answering Question \ref{quest_1/2} will lead to a better understanding of the Milnor conjecture \cite{Mil}: the fundamental group of any open $n$-manifold with $\mathrm{Ric}\ge 0$ is finitely generated.
	
We compare Theorem A with previous results. In \cite{Pan_eu,Pan_al_stable}, we have shown that if the universal cover is conic at infinity and satisfies certain stability conditions, (for example, the universal cover has a unique asymptotic cone,) then $\pi_1(M)$ is finitely generated and virtually abelian; in fact, these manifolds have zero escape rate \cite[Corollary 4.7]{Pan_es0}. In contrast, here we do not assume any additional stability conditions in Theorem A(1), and the escape rate may not be equal or close to $0$ in general. In Theorem A(2), if $L$ is sufficiently close to $1$, then the universal cover fulfills the stability condition in \cite{Pan_al_stable} and thus $E(M,p)=0$. Theorem A(2) also confirms \cite[Conjecture 0.2]{Pan_al_stable} on the condition $E(M,p)\not=1/2$.
	
We briefly state our approach to prove Theorem A and give some indications why $\pi_1(M)$ cannot be the discrete Heisenberg $3$-group $H^3(\mathbb{Z})$. Let $\gamma$ be a generator of the center of $H^3(\mathbb{Z})$. $\gamma^b$ can be expressed as a word in terms of two elements $\alpha$ and $\beta$ outside the center; moreover, this word has word length comparable to $b^{1/2}$ as $b\to\infty$. This expression provides an upper bound on the length growth of $\gamma$:
$$|\gamma^b|\le C\cdot b^{1/2}$$
for all $b$ large. If one could find a lower bound violating the above upper bound, then it would end in a desired contradiction. To prove a lower bound, we study the equivariant asymptotic cones of $(\widetilde{M},\langle\gamma\rangle)$:
$$(r_i^{-1}\widetilde{M},\tilde{p},\langle\gamma\rangle)\overset{GH}\longrightarrow (Y,y,H),$$
where $r_i\to\infty$, $\widetilde{M}$ is the universal cover of $M$, and $\langle\gamma\rangle$ is the group generated by $\gamma$. The limit $(Y,y,H)$ may depend on the sequence $r_i$, so we shall study all equivariant asymptotic cones. One key intermediate step is to show that the asymptotic orbit $Hy$ is always homeomorphic to $\mathbb{R}$ (Proposition \ref{R_orb}). This actually requires some understanding of equivariant asymptotic cones of $(\widetilde{M},\pi_1(M,p))$ beforehand. Therefore, we shall first show that for any equivariant asymptotic cone $(Y,y,G)$ of $(\widetilde{M},\pi_1(M,p))$, the asymptotic orbit $Gy$ is homeomorphic to $\mathbb{R}^k$ (Proposition \ref{topol_dim_group}), by applying a critical rescaling argument and the metric cone structure. After knowing the orbit $Hy$ is homeomorphic to $\mathbb{R}$, we can further deduce some uniform controls on $Hy$ by using the metric cone structure (see Lemmas \ref{boom_bound} and \ref{orbit_length_bound}). These estimates lead to an almost linear growth estimate of $\gamma$: for any $s\in(0,1)$, we have  
$$|\gamma^b|\ge C\cdot b^{1-s}$$
for all $b$ large (Theorem \ref{growth_estimate}). The index bound in Theorem A(2) follows from the above almost linear growth estimate and the universal bounds in \cite{KW,Pan_al_stable}. We point out that this almost linear growth estimate and its proof are quite different from the case of small escape rate \cite{Pan_es0,Pan_esgap}, where a stronger almost translation estimate holds and the understanding of $Hy$ is not required (see Remark \ref{rem_compare_small_es} for details).
	
Motivated by \cite{PW_ex} and the work in this paper, we propose the conjecture below.	
	
\begin{conj}\label{conj_nil_dim}
	Let $(M,p)$ be an open $n$-manifold with $\mathrm{Ric}\ge 0$ and $E(M,p)\not=1/2$. Suppose that $\pi_1(M)$ contains a torsion-free nilpotent subgroup of nilpotency length $l$. Then there exist an asymptotic cone $(Y,y)$ of the universal cover and a closed $\mathbb{R}$-subgroup of $\mathrm{Isom}(Y)$ such that the orbit $\mathbb{R}y$ is homeomorphic to $\mathbb{R}$ but has Hausdorff dimension at least $l$.
\end{conj}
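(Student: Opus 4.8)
Since $E(M,p)\le 1/2$ always, the hypothesis $E(M,p)\ne 1/2$ forces $E(M,p)<1/2$, so $\pi_1(M)$ is finitely generated \cite{Sor}. Replacing the given nilpotent subgroup by the one generated by finitely many elements realizing a nontrivial $l$-fold iterated commutator, we may assume $\pi_1(M)$ contains a finitely generated torsion-free nilpotent subgroup $N$ of nilpotency class exactly $l$; let $\{C^i(N)\}$ be its lower central series, so $C^{l+1}(N)=\{e\}$ and $C^l(N)\ne\{e\}$, and fix $\gamma\in C^l(N)\setminus\{e\}$. The first input is purely algebraic: since some fixed power of $\gamma$ is an $l$-fold commutator in a fixed generating set of $N$, the Hall--Petresco collection process yields a word of length at most $C\,b^{1/l}$ representing $\gamma^b$, and representing each generator by a geodesic loop of bounded length gives
$$|\gamma^b|\le C\,b^{1/l}\qquad\text{for all large }b,$$
the higher-class analogue of the estimate $|\gamma^b|\le C b^{1/2}$ for $H^3(\mathbb{Z})$ recalled in the introduction.

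The plan is then to run the machinery behind Propositions \ref{topol_dim_group} and \ref{R_orb}, Lemmas \ref{boom_bound} and \ref{orbit_length_bound}, and Theorem \ref{growth_estimate} in reverse: rather than deducing a growth lower bound and a contradiction, one extracts from a critical rescaling an equivariant asymptotic cone
$$(r_i^{-1}\widetilde{M},\tilde p,\langle\gamma\rangle)\overset{GH}\longrightarrow(Y,y,H)$$
of the universal cover in which $H$ is a closed $\mathbb{R}$-subgroup of $\mathrm{Isom}(Y)$ and the orbit $Hy$ is homeomorphic to $\mathbb{R}$. Granting such a cone, the bound $\dim_{\mathcal H}(Hy)\ge l$ should be comparatively accessible. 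Since $\pi_1(M)$ acts freely and properly discontinuously, the displacement $D(k):=d(\tilde p,\gamma^k\tilde p)$ tends to infinity, and $D(k)\le|\gamma^k|\le C k^{1/l}$ forces the counting function $\nu(R):=\#\{k\in\mathbb{Z}:D(k)<R\}$ to satisfy $\nu(R)\gtrsim R^l$. As any two orbit points $\gamma^a\tilde p,\gamma^b\tilde p$ lying in a common ball of radius $\rho r_i$ satisfy $D(a-b)<2\rho r_i$, such a ball contains at most $\nu(2\rho r_i)$ orbit points; hence, normalizing the counting measure on $\langle\gamma\rangle\tilde p\cap B(\tilde p,r_i)$ and passing to a limit measure $\mu$ on $Hy$ along a subsequence of scales on which $\nu$ is regularly varying (necessarily of index at least $l$, since $\nu(R)\gtrsim R^l$), one gets $\mu(B(x,\rho))\lesssim\rho^l$, and the mass distribution principle yields $\dim_{\mathcal H}(Hy)\ge l$.

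The crux, and the expected main obstacle, is producing the cone in the first place, i.e. choosing the rescalings $r_i$ so that $H$ is genuinely a closed $\mathbb{R}$-subgroup and $Hy$ is homeomorphic to $\mathbb{R}$. In the proof of Theorem A this is exactly Proposition \ref{R_orb}, whose argument relies on the metric cone structure of the asymptotic cone of the universal cover, both to rule out degenerate asymptotic orbits (collapsed orbits, or orbits of topological dimension $\ge 2$) and to derive the uniform comparisons of Lemmas \ref{boom_bound} and \ref{orbit_length_bound}. Here no cone structure is available: by the contrapositive of Theorem A(1), if $\pi_1(M)$ is not virtually abelian the universal cover is \emph{not} conic at infinity, so the cone argument must be replaced by a new mechanism that, at a suitably critical scale, controls the topology of the asymptotic orbit of $\langle\gamma\rangle$. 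I expect this --- together with upgrading ``$\dim_{\mathcal H}(Hy)>l-\epsilon$ for all $\epsilon$'' to the sharp ``$\dim_{\mathcal H}(Hy)\ge l$'' for a single cone, via lower semicontinuity of the growth exponent along the blow-down --- to be where the real difficulty lies; the hypothesis $E(M,p)\ne 1/2$ should enter, as in \cite{Pan_esgap}, to keep the orbit of $\langle\gamma\rangle$ from collapsing in the rescaled limit, and the bound $l$ should be tested for sharpness against the examples of \cite{PW_ex}.
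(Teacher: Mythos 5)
The statement you are addressing is Conjecture \ref{conj_nil_dim}: the paper does not prove it, and offers no argument for it beyond the motivation coming from \cite{PW_ex} and the machinery of Sections \ref{sec_nil}--\ref{sec_abel}, so there is no ``paper proof'' to match. Your proposal, by your own admission, is not a proof either: the central step --- producing an equivariant asymptotic cone $(Y,y,H)$ of $(\widetilde{M},\langle\gamma\rangle)$ in which $H$ is a closed $\mathbb{R}$-subgroup and $Hy$ is homeomorphic to $\mathbb{R}$ --- is exactly what you leave unresolved, and it is the heart of the matter. In the paper this is Proposition \ref{R_orb}, and its proof (as well as Proposition \ref{topol_dim_group} and the uniform controls of Lemmas \ref{boom_bound} and \ref{orbit_length_bound}) leans entirely on the metric cone structure of the asymptotic cones of $\widetilde{M}$: cone vertices force isotropy to fix the basepoint, orbits sit in the Euclidean factor, and Lemma \ref{cone_isom_central} makes $G_0$ central. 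In the regime of the conjecture with $l\ge 2$ the group is not virtually abelian, so by the contrapositive of Theorem A(1) the universal cover is \emph{not} conic at infinity and none of these tools apply; you correctly identify this, but you do not supply the ``new mechanism'' that would replace them. As it stands, the proposal is a research plan with its key lemma missing, not an argument that could be checked.

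Two further points on the parts you do sketch. First, the algebraic input is fine in spirit (distortion of $C^l(N)$ in a torsion-free nilpotent group of class $l$ gives $|\gamma^b|\le C\,b^{1/l}$ for a suitable $\gamma$), but note that the paper's appendix shows the actual metric growth exponent of such a $\gamma$ can be strictly smaller than $1/l$ (there $|\gamma_3^b|\sim b^{1/(1+\beta+4\alpha)}$ with $1+\beta+4\alpha\ge 2=l$), so no matching lower bound $|\gamma^b|\gtrsim b^{1/l}$ is available. Second, this matters for your mass-distribution step: the Frostman bound $\mu(B(x,\rho))\lesssim\rho^{l}$ needs $\nu(2\rho r_i)\lesssim \rho^{l}\,\nu(r_i)$, i.e.\ an upper control of the counting function at the scales used, which does not follow from $\nu(R)\gtrsim R^{l}$ alone; you gesture at choosing scales where $\nu$ is ``regularly varying of index at least $l$,'' but such a subsequence (uniform over the whole range of radii $[\rho r_i, r_i]$) must be constructed, and even then the limit measure is supported on the limit set of the orbit points, which you have not identified with the $\mathbb{R}$-orbit $Hy$ whose existence is itself unproven. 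So the proposal contains a plausible outline for the dimension estimate \emph{conditional} on the cone, but both the conditional step and, above all, the construction of the cone remain genuine gaps; the statement remains a conjecture.
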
	

The first examples of asymptotic cones that admit isometric $\mathbb{R}$-orbits with Hausdorff dimension strictly larger than $1$ were discovered in \cite{PW_ex}. It was suspected in \cite[Remark 1.7]{PW_ex} that this feature of extra Hausdorff dimension might be related to the nilpotency of fundamental groups. Conjecture \ref{conj_nil_dim} is a formal description of the question first raised in \cite[Remark 1.7]{PW_ex}.
	
We organize the paper as follows. We start with some preliminaries in Section \ref{sec_pre}; this includes some results of nilpotent isometric actions on metric cones and the escape rate. In Section \ref{sec_nil}, assuming that $M$ has the conditions in Theorem A(1) and $\pi_1(M)$ is nilpotent, we study the equivariant asymptotic geometry of $(\widetilde{M},\pi_1(M,p))$. Then in Section \ref{sec_Z}, we further study the asymptotic orbits coming from $\langle\gamma\rangle$-action, where $\gamma\in \pi_1(M,p)$ has infinite order. The properties of these asymptotic orbits lead to the almost linear growth estimate and virtual abelianness in Section \ref{sec_abel}.
	
\tableofcontents

\textit{Acknowledgements.} The author is supported by Fields Postdoctoral Fellowship from the Fields Institute. The author is grateful to Vitali Kapovitch and Guofang Wei for many helpful conversations during the preparation of this paper.

\section{Preliminaries}\label{sec_pre}

\subsection{Equivariant asymptotic geometry and metric cones}

Let $M$ be an open $n$-manifold with $\mathrm{Ric}\ge 0$. Recall that for any sequence $r_i\to\infty$, after passing to a subsequence if necessary, the corresponding blow-down sequence converges in the pointed Gromov-Hausdorff topology:
$$(r_i^{-1}M,p)\overset{GH}\longrightarrow (X,x).$$
We call the limit space $(X,x)$ an \textit{asymptotic cone} of $M$. In general, the limit $X$ may not be unique and may not be a metric cone (see examples in \cite{CC97}).

Recall that an open $n$-manifold $M$ is said to have \textit{Euclidean volume growth of constant $L$}, if 
$$\lim\limits_{R\to\infty} \dfrac{\mathrm{vol}(B_R(p))}{\mathrm{vol}(B^n_R(0))}=L>0,$$
where $p\in M$ and $B^n_R(0)$ is an $R$-ball in Euclidean space $\mathbb{R}^n$. By Bishop-Gromov volume comparison, the above limit always exists and is no greater than $1$.

\begin{thm}\cite{CC96}
	Let $M$ be an open $n$-manifold with $\mathrm{Ric}\ge 0$. If $M$ has Euclidean volume growth, then $M$ is conic at infinity.
\end{thm}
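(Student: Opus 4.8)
The plan is to deduce the statement from two pillars of Cheeger--Colding theory: the volume convergence theorem for non-collapsed limits and the ``volume cone implies metric cone'' rigidity theorem. Write $\omega_n=\mathrm{vol}(B^n_1(0))$, and let $(X,x)$ be an arbitrary asymptotic cone, say $(r_i^{-1}M,p)\overset{GH}\longrightarrow(X,x)$ with $r_i\to\infty$; it suffices to show that $X$ is a metric cone with vertex $x$.

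First I would unpack Bishop--Gromov. The ratio $R\mapsto \mathrm{vol}(B_R(p))/(\omega_n R^n)$ is non-increasing and, by the Euclidean volume growth hypothesis, decreases to some $L\in(0,1]$; in particular $\mathrm{vol}(B_R(p))\ge L\,\omega_n R^n$ for all $R>0$. Rescaling, the volume of the unit ball in $r_i^{-1}M$ is $r_i^{-n}\,\mathrm{vol}(B_{r_i}(p))\ge L\,\omega_n$, so the blow-down sequence is non-collapsing and $(X,x)$ is an $n$-dimensional non-collapsed limit space with $\mathrm{Ric}\ge 0$. By the non-collapsed volume convergence theorem, for every $s>0$ the volumes of $s$-balls converge to Hausdorff measure: $r_i^{-n}\,\mathrm{vol}(B_{r_i s}(p))\to \mathcal H^n(B_s(x))$.

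Next I would identify $X$ as a volume cone. For fixed $s>0$, as $i\to\infty$ one has $r_i s\to\infty$, so $\mathrm{vol}(B_{r_i s}(p))/(\omega_n(r_i s)^n)\to L$; therefore
$$\mathcal H^n(B_s(x))=\lim_{i\to\infty} r_i^{-n}\,\mathrm{vol}(B_{r_i s}(p))=\lim_{i\to\infty}\frac{\mathrm{vol}(B_{r_i s}(p))}{\omega_n(r_i s)^n}\cdot\omega_n s^n=L\,\omega_n s^n.$$
Hence $\mathcal H^n(B_s(x))/\mathcal H^n(B_t(x))=(s/t)^n$ for all $s,t>0$: the renormalized limit measure of $(X,x)$ has exactly the volume ratios of $\mathbb R^n$, at every scale.

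Finally I would apply the volume-cone-implies-metric-cone theorem of Cheeger--Colding \cite{CC96}: a non-collapsed limit of manifolds with $\mathrm{Ric}\ge 0$ on which the renormalized limit measure satisfies $\nu(B_R(x))=(R/r)^n\,\nu(B_r(x))$ for all radii in an interval is, over the corresponding annular region, isometric to a metric cone with vertex $x$; taking the interval to be $(0,\infty)$ forces $(X,x)$ to be a metric cone with vertex $x$. Since $(X,x)$ was arbitrary, $M$ is conic at infinity. The one genuinely deep ingredient is this last rigidity result, whose proof rests on the segment inequality together with sharp gradient and Hessian estimates for the distance function (or an associated harmonic function); I would quote it rather than reprove it. A secondary point deserving care is the precise form of volume convergence in the non-compact, non-collapsed setting — which is exactly where the Euclidean volume growth hypothesis enters, guaranteeing non-collapsing along the blow-down.
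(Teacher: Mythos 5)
Your proposal is correct and is exactly the standard deduction behind the cited result: the paper gives no proof of this statement (it is quoted from [CC96]), and your chain — Bishop--Gromov monotonicity giving the limit $L>0$ and non-collapsing of the blow-down sequence, volume convergence identifying $\mathcal H^n(B_s(x))=L\,\omega_n s^n$ for every $s$, and then the volume-cone-implies-metric-cone rigidity of Cheeger--Colding — is precisely how the theorem is obtained there. The only point to phrase carefully is that the rigidity step is applied either through the almost-rigidity (almost volume annulus implies almost metric annulus) theorem on $M$ at large scales and then passed to the blow-down limit, or through its limit-space formulation; either route closes the argument.
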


For the purpose of understanding fundamental groups, we study the asymptotic geometry of the Riemannian universal cover $\widetilde{M}$ with the isometric $\Gamma$-action, where $\Gamma=\pi_1(M,p)$. Let $r_i\to\infty$ be a sequence. We can pass to a subsequence and consider the pointed equivariant Gromov-Hausdorff convergence \cite{FY}:
$$(r_i^{-1}\widetilde{M},\tilde{p},\Gamma)\overset{GH}\longrightarrow (Y,y,G),$$
where $G$ is a closed subgroup of the isometry group of $Y$. It follows from the work of Colding-Naber that $G$ is always a Lie group \cite{CN}. We call the limit space $(Y,y,G)$ an \textit{equivariant asymptotic cone} of $(\widetilde{M},\Gamma)$.

As a matter of fact, if $M$ has nonnegative sectional curvature, then as a consequence of the soul theorem \cite{CG_soul}, the equivariant asymptotic cone of $(\widetilde{M},\Gamma)$ is unique as $(C(Z),z,G)$, where $C(Z)$ is a metric cone with vertex $z$; moreover, the orbit $Gz$ is an Euclidean factor of $C(Z)$.

Let $\Omega(\widetilde{M},\Gamma)$ be the set of all equivariant asymptotic cones of $(\widetilde{M},\Gamma)$.

\begin{prop}\label{cpt_cnt}
	The set $\Omega(\widetilde{M},\Gamma)$ is compact and connected in the pointed equivariant Gromov-Hausdorff topology.
\end{prop}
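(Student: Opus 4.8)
The plan is to establish the two topological properties of $\Omega(\widetilde{M},\Gamma)$ separately, both exploiting the standard precompactness of pointed equivariant Gromov-Hausdorff space under a uniform lower Ricci bound together with a diagonal/continuity argument on the rescaling parameter $r_i$.

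For compactness, I would first recall that the family $\{(r^{-1}\widetilde{M},\tilde p,\Gamma): r\geq 1\}$ satisfies a uniform Ricci lower bound (indeed $\mathrm{Ric}\ge 0$ is scale-invariant), so by the Fukaya-Yamaguchi equivariant Gromov-Hausdorff precompactness theorem any sequence of elements of $\Omega(\widetilde{M},\Gamma)$ subconverges to some pointed equivariant limit $(Y,y,G)$. The only thing to check is that this limit is again \emph{itself} an equivariant asymptotic cone, i.e.\ is realized as a blow-down limit along some single sequence $r_i\to\infty$. This is a diagonal argument: given $(Y^{(j)},y^{(j)},G^{(j)})\in\Omega(\widetilde{M},\Gamma)$ with $(Y^{(j)},y^{(j)},G^{(j)})\to (Y,y,G)$, pick for each $j$ a scale $r^{(j)}$ so large that $(({r^{(j)}})^{-1}\widetilde{M},\tilde p,\Gamma)$ is equivariant-GH-close to $(Y^{(j)},y^{(j)},G^{(j)})$ within $1/j$ on the ball of radius $j$, and so that $r^{(j)}\to\infty$ (possible since each $(Y^{(j)},\dots)$ is the limit of some sequence going to infinity). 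Then $(({r^{(j)}})^{-1}\widetilde{M},\tilde p,\Gamma)\to (Y,y,G)$, so $(Y,y,G)\in\Omega(\widetilde{M},\Gamma)$, and closedness under subconvergence plus precompactness gives compactness.

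For connectedness, the key idea is that the map $r\mapsto (r^{-1}\widetilde{M},\tilde p,\Gamma)$ is continuous in the pointed equivariant Gromov-Hausdorff topology: rescaling by a factor close to $1$ changes all distances, and hence the metric on bounded balls, by a controlled amount, and the group action is unaffected as a set of isometries; so $|\log r-\log r'|$ small forces the rescaled spaces to be equivariant-GH-close on any fixed ball. Suppose $\Omega(\widetilde{M},\Gamma)=A\sqcup B$ with $A,B$ nonempty, disjoint, closed (hence, by compactness, at positive equivariant-GH distance $\delta>0$ from each other). Pick sequences $s_i\to\infty$ and $t_i\to\infty$ with the blow-downs along $s_i$ converging into $A$ and along $t_i$ into $B$; after relabeling we may interleave so that $s_i<t_i<s_{i+1}$. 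On each interval $[\log s_i,\log t_i]$ the continuous path $u\mapsto (e^{-u}\widetilde{M},\tilde p,\Gamma)$ travels from within $\delta/3$ of $A$ to within $\delta/3$ of $B$, so by continuity (uniform on the relevant bounded balls, using that the limit objects are pointed spaces and one works ball-by-ball) there is a parameter $u_i$ in that interval where the blow-down is at distance roughly $\delta/2$ from both $A$ and $B$. Letting $i\to\infty$ and passing to a subsequential equivariant-GH limit of $(e^{-u_i}\widetilde{M},\tilde p,\Gamma)$ produces an element of $\Omega(\widetilde{M},\Gamma)$ lying in neither $A$ nor $B$, a contradiction. Hence $\Omega(\widetilde{M},\Gamma)$ is connected.

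The main technical obstacle is making the "continuity of $r\mapsto (r^{-1}\widetilde{M},\tilde p,\Gamma)$" and the intermediate-value step precise in the pointed (non-compact) equivariant Gromov-Hausdorff setting, where the metric is really a countable family of ball-wise comparisons with basepoints and equivariant $\varepsilon$-approximations that must respect the group actions; one has to choose the radius-$R$ scale at which one measures proximity consistently along the whole argument and track how the group-equivariance part of the $\varepsilon$-approximations behaves under small rescalings. This is routine but requires care; the Ricci curvature bound enters only through Fukaya-Yamaguchi precompactness, so no geometry beyond that is needed for this proposition. I would remark that a cleaner route is to combine the two parts: the continuous image of the connected, locally compact parameter half-line $[1,\infty)$ under $r\mapsto (r^{-1}\widetilde{M},\tilde p,\Gamma)$ has a "limit set at infinity" equal to $\Omega(\widetilde{M},\Gamma)$, and the limit set at infinity of a continuous curve in a compact metric space is always compact and connected — a standard point-set fact I would either cite or prove in two lines.
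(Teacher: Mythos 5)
Your proposal is correct, and it follows essentially the same route as the proof the paper points to (the citation of \cite[Proposition 2.1]{Pan_eu}): equivariant Gromov--Hausdorff precompactness plus a diagonal argument for compactness, and the observation that $\Omega(\widetilde{M},\Gamma)$ is the limit set at infinity of the continuous curve $r\mapsto (r^{-1}\widetilde{M},\tilde p,\Gamma)$, hence connected. No substantive differences to report.
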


See \cite[Proposition 2.1]{Pan_eu} for a proof. The compactness allows us to obtain new spaces in $\Omega(\widetilde{M},\Gamma)$ as the Gromov-Hausdorff limit of any sequence $\{(Y_j,y_j,G_j)\}\subseteq \Omega(\widetilde{M},\Gamma)$. Also, note that if $(Y,y,G)\in\Omega(\widetilde{M},\Gamma)$, then its scaling $(sY,y,G)$ is also an equivariant asymptotic cone for any $s>0$. Therefore, the Gromov-Hausdorff limit of any sequence $(s_jY_j,y_j,G_j)$ in $\Omega(\widetilde{M},\Gamma)$, where $s_j>0$, is an equivariant asymptotic cone as well.

For asymptotic cones that are metric cones, by Cheeger-Colding's splitting theorem \cite{CC96}, any line in the space must split off isometrically. Therefore, we can write such a metric cone $(Y,y)$ as $(\mathbb{R}^k \times C(Z),(0,z))$, where $C(Z)$ is a metric cone without lines and $z$ is the unique vertex of $C(Z)$. The isometry group of $\mathrm{Isom}(Y)$ also splits as 
$$\mathrm{Isom}(Y)=\mathrm{Isom}(\mathbb{R}^k)\times \mathrm{Isom}(C(Z)).$$
After setting a point in $\mathbb{R}^k$ as the origin of $\mathbb{R}^k$, we can express any isometry $g\in \mathrm{Isom}(Y)$ as 
$$g=(A,v,\alpha),$$
where $(A,v)\in \mathrm{Isom}(\mathbb{R}^k)=O(k)\ltimes \mathbb{R}^k$ and $\alpha\in \mathrm{Isom}(C(Z))$. The multiplication in $\mathrm{Isom}(Y)$ is given by
$$(A_1,v_1,\alpha_1)\cdot (A_2,v_2,\alpha_2)=(A_1A_2,A_1v_2+v_1,\alpha_1\alpha_2).$$
Also note that since $z$ is the unique vertex of $C(Z)$, any isometry of $C(Z)$ must fix the vertex $z$. Consequently, for any element $g\in\mathrm{Isom}(Y)$, the orbit point $gy$ must be contained in the Euclidean factor $\mathbb{R}^k\times \{z\}$.

Next, we go through some basic results about nilpotent isometric actions on metric cones. Recall that a group $N$ is called \textit{nilpotent}, if its lower central series terminates at the trivial identity subgroup, that is,
$$N=C_{0}(N)\triangleright C_{1}(N)\triangleright...\triangleright C_{l}(N)=\{e\},$$
where the subgroup $C_{j+1}(N)=[C_{j}(N),N]$ is inductively defined.
The smallest integer $l$ such that $C_{l}(N)=\{e\}$ is called the \textit{nilpotency length} of $N$. 

\begin{lem}\label{isom_eu_commute}
	Let $G$ be a nilpotent subgroup of $\mathrm{Isom}(\mathbb{R}^k)$. Then two elements $(A,v)$ and $(B,w)$ in $G$ commute if and only if $A$ and $B$ commute.
\end{lem}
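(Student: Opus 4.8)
The plan is to exploit the structure of $\mathrm{Isom}(\mathbb{R}^k) = O(k)\ltimes\mathbb{R}^k$, where the commutator of two elements $(A,v)$ and $(B,w)$ is
$$[(A,v),(B,w)] = (ABA^{-1}B^{-1},\ (\text{something lying in } \mathbb{R}^k)),$$
so that the rotational part of any commutator depends only on the rotational parts $A,B$. The ``only if'' direction is immediate: if $(A,v)$ and $(B,w)$ commute, then comparing rotational parts in $(A,v)(B,w)=(B,w)(A,v)$ gives $AB=BA$. The substance is the ``if'' direction.

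So first I would assume $A$ and $B$ commute and try to show $(A,v)$ and $(B,w)$ commute. A direct computation shows $(A,v)(B,w)=(AB,\,Aw+v)$ and $(B,w)(A,v)=(BA,\,Bv+w)$; since $AB=BA$ already, the two elements commute if and only if $Aw+v = Bv+w$, i.e. $(A-I)w = (B-I)v$. This is an honest linear condition that need not hold for arbitrary $v,w$ (e.g. take $A=B=I$ and $v\ne w$ — but then $[(I,v),(I,w)]=(I,0)$, so they do commute; the point is one must use that $G$ is \emph{nilpotent}). The key is to feed in nilpotency of $G$.

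The mechanism I would use: decompose $\mathbb{R}^k$ according to the commuting pair $A,B$. Since $A,B$ commute and are orthogonal, $\mathbb{R}^k$ splits $A$- and $B$-invariantly into the common fixed subspace $V_0 = \mathrm{Fix}(A)\cap\mathrm{Fix}(B)$ and its orthogonal complement $V_1$, on which $A-I$ or $B-I$ (in fact their product, after further decomposition) is invertible — more precisely $V_1$ decomposes further so that on each piece at least one of $A-I$, $B-I$ is invertible. Projecting to $V_0$: there $(A,v)$ and $(B,w)$ act as pure translations by the $V_0$-components of $v$ and $w$, which automatically commute. On $V_1$: here I claim the nilpotency of $G$ forces the relevant translational obstruction to vanish. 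The idea is that repeatedly taking commutators of $(A,v)$ and $(B,w)$ with themselves and each other produces elements whose rotational parts are powers of $ABA^{-1}B^{-1}=I$, hence pure translations, and the translational parts involve applying $A-I$ and $B-I$; since on $V_1$ these maps are ``expanding'' in the appropriate sense (they have no nonzero fixed vectors on the relevant summand), a nonzero obstruction $(A-I)w-(B-I)v$ would generate an infinite non-terminating lower central series inside $G$ — indeed one can produce an infinite descending chain of nontrivial translation subgroups — contradicting nilpotency. So $(A-I)w = (B-I)v$ on $V_1$, and combined with the $V_0$ computation we get the commuting relation on all of $\mathbb{R}^k$.

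The main obstacle I expect is making the last step precise: showing that a nonzero value of $(A-I)w-(B-I)v$ genuinely obstructs nilpotency rather than just failing one commuting relation. The clean way is probably to note that the closure $\bar{G}$ is a nilpotent Lie subgroup of $\mathrm{Isom}(\mathbb{R}^k)$, pass to its identity component or its Lie algebra, and observe that on $V_1$ the translational directions produced by brackets with $(A,v)$ or $(B,w)$ are preserved and moved by the semisimple part $A,B$ in a way incompatible with the bracket series dying — essentially because a nilpotent group of Euclidean isometries is, up to finite index, a compact extension of a group of translations, and translations generated via commutators from a genuinely ``rotating'' generator cannot be reconciled with nilpotency unless the rotation acts trivially on those translation directions. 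Alternatively one can argue entirely at the group level by writing the iterated commutator $[(A,v),(B,w)^{(m)}]$ explicitly and checking its translational part is $(B^m - I)\cdot(\text{fixed nonzero vector})$ up to lower order, which stays bounded away from $0$ in norm and never stabilizes, contradicting $C_l(G)=\{e\}$. Either route reduces to a bounded-away-from-zero estimate for $(A-I)$ and $(B-I)$ on $V_1$, which is where I would concentrate the actual work.
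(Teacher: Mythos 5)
The paper itself does not prove this lemma; it only points to \cite[Lemma 2.4]{Pan_al_stable}, so your argument can only be judged on its own terms. Its skeleton is right and completable, and it is the natural argument: with $AB=BA$ the commutator is the pure translation $[(A,v),(B,w)]=(I,t)$ with $t=(A-I)w-(B-I)v$, and commutating a translation $(I,t)$ against $(A,v)$ (resp.\ $(B,w)$) yields $(I,(A-I)t)$ (resp.\ $(I,(B-I)t)$); these iterated commutators lie in successive terms of the lower central series, so nilpotency of length $l$ forces $(A-I)^{l-1}t=0$ and $(B-I)^{l-1}t=0$.

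Two points in your concluding step need repair. First, the $m$-fold iterated commutator against $(B,w)$ has translational part $(B-I)^{m}t$, not $(B^{m}-I)$ applied to a fixed vector. Second, ``stays bounded away from $0$ in norm'' is neither true nor what you need: on a $2$-plane where $B$ rotates by a small angle $\theta$, the operator norm of $(B-I)$ there is $|e^{i\theta}-1|<1$, so the iterates decay geometrically; what nilpotency actually requires is that some finite-stage iterated commutator be \emph{exactly} trivial, so injectivity of $(A-I)$, $(B-I)$ on the relevant invariant pieces is all that is needed. The cleanest finish, which also lets you skip the finer decomposition of $V_1$: since $A,B$ are orthogonal, $A-I$ and $B-I$ are normal operators, hence $\ker\bigl((A-I)^{m}\bigr)=\ker(A-I)=\mathrm{Fix}(A)$ and $\mathrm{im}(A-I)=\mathrm{Fix}(A)^{\perp}$, and likewise for $B$. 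The two identities above then give $t\in\mathrm{Fix}(A)\cap\mathrm{Fix}(B)$, while by its very definition $t\in\mathrm{im}(A-I)+\mathrm{im}(B-I)=\bigl(\mathrm{Fix}(A)\cap\mathrm{Fix}(B)\bigr)^{\perp}$; hence $t=0$ and the two isometries commute. Your alternative Lie-theoretic route (closure, identity component, ``compact extension of a translation group'') is vaguer than necessary and can simply be dropped.
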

See \cite[Lemma 2.4]{Pan_al_stable} for a proof.

\begin{lem}\label{cone_isom_central}
	Let $(Y,y)\in\mathcal{M}(n,0)$ be a metric cone with vertex $y$. Let $G$ be a closed nilpotent subgroup of the isometry group of $Y$. Then the center of $G$ has finite index in $G$; in particular, the identity component subgroup $G_0$ must be central in $G$.
\end{lem}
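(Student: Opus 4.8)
The strategy is to establish a commuting criterion for $G$ acting on the whole cone $Y$, in the spirit of Lemma \ref{isom_eu_commute}, and then combine it with the structure of compact nilpotent Lie groups.

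Since $(Y,y)\in\mathcal{M}(n,0)$ is a metric cone, I would first split it as $Y=\mathbb{R}^k\times C(Z)$ as in the discussion preceding the lemma, so that $\mathrm{Isom}(Y)=(O(k)\ltimes\mathbb{R}^k)\times\mathrm{Isom}(C(Z))$, and introduce the ``rotational part'' homomorphism $r\colon\mathrm{Isom}(Y)\to O(k)\times\mathrm{Isom}(C(Z))$, $(A,v,\alpha)\mapsto(A,\alpha)$, whose kernel is the normal translation subgroup $\{(\mathrm{id},v,\mathrm{id})\}\cong\mathbb{R}^k$. Two facts drive the proof. First, the target of $r$ is a compact Lie group: $O(k)$ is compact, and every isometry of $C(Z)$ fixes the unique vertex $z$, so $\mathrm{Isom}(C(Z))$ is the stabiliser of a point in a proper metric space and hence compact. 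Second --- and this is the main claim, generalising Lemma \ref{isom_eu_commute} --- for $g,h\in G$ one has $[g,h]=e$ if and only if $[r(g),r(h)]=e$.

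Granting the main claim, the lemma follows quickly. The image $r(G)$ is a nilpotent subgroup of a compact Lie group, so $\overline{r(G)}$ is a compact nilpotent Lie group; its Lie algebra is nilpotent and carries an invariant inner product, hence is abelian, so the identity component $\overline{r(G)}_0$ is a torus. Conjugation by any element of $\overline{r(G)}$ induces an automorphism of this torus that is unipotent --- because $\overline{r(G)}$ is nilpotent, iterating commutators into the abelian normal subgroup $\overline{r(G)}_0$ eventually yields the identity --- and of finite order, since $\overline{r(G)}/\overline{r(G)}_0$ is finite; a unipotent automorphism of finite order of a torus is trivial, so $\overline{r(G)}_0\subseteq Z(\overline{r(G)})$ and $[\,\overline{r(G)}:Z(\overline{r(G)})\,]<\infty$. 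Consequently $Z(r(G))\supseteq r(G)\cap\overline{r(G)}_0$ has finite index in $r(G)$, and $r(G_0)\subseteq\overline{r(G)}_0$ is central in $\overline{r(G)}$. Pulling these back through the main claim: if $r(g)$ is central in $r(G)$ then $g$ is central in $G$, so $\{g\in G:\ r(g)\in Z(r(G))\}\subseteq Z(G)$ and this subgroup has index $[r(G):Z(r(G))]<\infty$ in $G$; and since every $g\in G_0$ has $r(g)$ central in $\overline{r(G)}$, we conclude $G_0\subseteq Z(G)$.

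The heart of the matter --- and the step I expect to be most delicate --- is the main claim. The converse direction is trivial, so assume $[r(g),r(h)]=e$. Then $[g,h]$ has trivial rotational part, i.e.\ $[g,h]=\tau_v$ is translation by some $v\in\mathbb{R}^k$. Writing $g=(A_g,v_g,\alpha_g)$, one checks $g\tau_w g^{-1}=\tau_{A_g w}$, hence $[g,\tau_w]=\tau_{(A_g-1)w}$; iterating this into the lower central series of $G$ and using nilpotency forces $(A_g-1)^N v=0$ for some $N$, so $v\in\ker(A_g-1)=\mathrm{Fix}(A_g)$, the last equality because $A_g\in O(k)$ is diagonalisable. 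The same argument with $h$ gives $v\in\mathrm{Fix}(A_h)$. On the other hand the commutator formula in $O(k)\ltimes\mathbb{R}^k$, together with $[A_g,A_h]=e$, gives $v=(A_g-1)v_h-(A_h-1)v_g$; since $\mathrm{Im}(A_g-1)=\mathrm{Fix}(A_g)^{\perp}$ and $\mathrm{Im}(A_h-1)=\mathrm{Fix}(A_h)^{\perp}$ for orthogonal matrices, pairing this identity with $v\in\mathrm{Fix}(A_g)\cap\mathrm{Fix}(A_h)$ yields $|v|^{2}=0$. Hence $v=0$ and $[g,h]=e$. The only subtlety is the careful bookkeeping of the lower central series when iterating the commutators, which genuinely uses that $G$ is nilpotent rather than merely virtually nilpotent; everything else is linear algebra of orthogonal maps combined with the compactness of $O(k)\times\mathrm{Isom}(C(Z))$.
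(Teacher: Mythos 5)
Your proposal is correct and takes essentially the same route as the paper: the same splitting $Y=\mathbb{R}^k\times C(Z)$, the same projection $(A,v,\alpha)\mapsto(A,\alpha)$ onto the compact group $O(k)\times\mathrm{Isom}(C(Z))$, centrality and finite index of the identity component of the compact nilpotent closure of the image, and a commuting criterion to pull centrality back to $G$. The only difference is that you supply self-contained (and correct) proofs of the two ingredients the paper simply cites from \cite{Pan_al_stable}, namely the commuting criterion generalizing Lemma \ref{isom_eu_commute} (via the lower central series and linear algebra of orthogonal maps) and the fact that a compact nilpotent Lie group has central identity component of finite index.
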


\begin{proof}
	We write $Y=\mathbb{R}^k\times C(Z)$, where $C(Z)$ does not contain lines, and consider the group homomorphism
	\begin{align*}
		\psi: \mathrm{Isom}(Y) &\to O(k)\times \mathrm{Isom}(C(Z))\\
		      (A,v,\alpha) &\mapsto (A,\alpha). 
    \end{align*}	
    Note that $\overline{\psi(G)}$, the closure of $\psi(G)$, is a compact nilpotent Lie group. It follows from a standard result of group theory that the identity component of $\overline{\psi(G)}$, denoted by $K$, is central and of finite index in $\overline{\psi(G)}$ (see, for example, \cite[Lemma 5.7]{Pan_al_stable}) Now we consider the subgroup $H$ of $G$ defined by
    $$H=\psi^{-1}(K) \cap G,$$
    which has finite index in $G$.
    If follows from Lemma \ref{isom_eu_commute} that $H$ is central in $G$.
\end{proof}

Let $(M,p)$ be an open manifold with the assumptions in Theorem A(1) and a nilpotent fundamental group $\Gamma$. Lemma \ref{cone_isom_central} implies that $G$ is always virtually abelian for any $(Y,y,G)\in\Omega(\widetilde{M},\Gamma)$. One may ask whether the virtual abelianness of all asymptotic limit groups $G$ indicates that $\Gamma$ itself should be virtually abelian as well. However, it is possible that $\Gamma$ is a torsion-free nilpotent non-abelian group, while all asymptotic limit groups of $\Gamma$ are abelian; see Appendix for the example. In other words, the nilpotency length of $\Gamma$ may not be well-preserved in the asymptotic limits. 

\subsection{Escape rate}

The notion of escape rate was introduced in \cite{Pan_es0} to study the structure of fundamental groups. It measures where the minimal representing geodesic loops of $\pi_1(M,p)$ are positioned in $M$. We assign two natural quantities to any loop $c$ based at $p\in M$: its length and its size. Here, size means the smallest radius $R$ such that $c$ is contained in the closed ball $\bar{B}_R(p)$, or equivalently, the Hausdorff distance between the loop $c$ and the base point $p$. For each element $\gamma\in \pi_1(M,p)$, we choose a representing geodesic loop of $\gamma$ at $p$, denoted by $c_\gamma$, such that $c_\gamma$ has the minimal length in its homotopy class; if there are multiple choices of $c_\gamma$, we choose the one with the smallest size. We write 
$$|\gamma|:=d(\gamma\tilde{p},\tilde{p})=\mathrm{length}(c_\gamma)$$ for convenience. The escape rate of $(M,p)$ is defined as
$$E(M,p)=\limsup_{|\gamma|\to\infty}\dfrac{\mathrm{size}(c_\gamma)}{\mathrm{length}(c_\gamma)}.$$
As a convention, if $\pi_1(M)$ is a finite group, then we set $E(M,p)=0.$ 

In \cite{Sor}, Sormani proved that if $\pi_1(M)$ is not finitely generated, then there is a consequence of elements $\gamma_i\in \pi_1(M,p)$ with representing geodesic loops $c_i$ that are minimal up to halfway. In other words, if $E(M,p)\not= 1/2$, then $\pi_1(M)$ is finitely generated.

\begin{lem}\label{index_escape_rate}
	Let $(M,p)$ be an open manifold with $\mathrm{Ric}\ge 0$ and let $F: (\hat{M},\hat{p})\to (M,p)$ be a finite cover. Then $E(\hat{M},\hat{p})\le E(M,p)$.
\end{lem}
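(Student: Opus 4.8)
The plan is to compare, for each $\hat\gamma$ in $\hat\Gamma:=\pi_1(\hat M,\hat p)$, the minimal representing loop of $\hat\gamma$ in $\hat M$ with the minimal representing loop of $\hat\gamma$ (viewed in $\Gamma:=\pi_1(M,p)$) in $M$, carrying both computations up to the common Riemannian universal cover $\widetilde M$. Choose $\tilde p\in\widetilde M$ over $\hat p$, so that the covering maps satisfy $F\circ\pi_{\hat M}=\pi_M$, and regard $\hat\Gamma$ as a finite-index subgroup of $\Gamma$. If $\Gamma$ is finite then so is $\hat\Gamma$ and both escape rates are $0$ by convention, so I may assume $\Gamma$, hence $\hat\Gamma$, is infinite. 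I will use the elementary facts that for $\hat\gamma\in\hat\Gamma$ the quantity $|\hat\gamma|=d_{\widetilde M}(\tilde p,\hat\gamma\tilde p)$ is the same whether computed in $\hat M$ or in $M$; that the minimal-length geodesic loops of $\hat\gamma$ in $\hat M$ (resp.\ in $M$) are exactly the projections of minimal geodesics $\sigma$ of $\widetilde M$ from $\tilde p$ to $\hat\gamma\tilde p$; and that for such a $\sigma$ one has $\mathrm{size}(\pi_{\hat M}\circ\sigma)=\max_t d_{\widetilde M}(\hat\Gamma\tilde p,\sigma(t))$ and $\mathrm{size}(\pi_M\circ\sigma)=\max_t d_{\widetilde M}(\Gamma\tilde p,\sigma(t))$. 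Note that the naive comparison points the wrong way: since $\hat\Gamma\tilde p\subseteq\Gamma\tilde p$, one only gets $\mathrm{size}_M\le\mathrm{size}_{\hat M}$ for the same element, which would bound $E(M,p)$ from below, not above.

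The key input is a bound in the opposite direction, supplied by finiteness of the fiber. Set
\[
D:=\max\{\,d_{\hat M}(\hat p,q)\ :\ q\in F^{-1}(p)\,\}<\infty .
\]
Then for every $g\in\Gamma$ one has $d_{\widetilde M}(\hat\Gamma\tilde p,\,g\tilde p)\le D$: indeed $d_{\widetilde M}(\hat\Gamma\tilde p,g\tilde p)=d_{\hat M}\big(\pi_{\hat M}(\tilde p),\pi_{\hat M}(g\tilde p)\big)=d_{\hat M}\big(\hat p,\pi_{\hat M}(g\tilde p)\big)$, and $\pi_{\hat M}(g\tilde p)\in F^{-1}(p)$ because $F(\pi_{\hat M}(g\tilde p))=\pi_M(g\tilde p)=p$. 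In words: however far an orbit point $g\tilde p$ drifts from $\tilde p$, it always lies within the fixed distance $D$ of the smaller orbit $\hat\Gamma\tilde p$.

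Now fix $\hat\gamma\in\hat\Gamma$, let $c_{\hat\gamma}$ be the minimal-length, smallest-size representing loop of $\hat\gamma$ in $M$, and let $\sigma\colon[0,|\hat\gamma|]\to\widetilde M$ be its lift, so that $\max_t d_{\widetilde M}(\Gamma\tilde p,\sigma(t))=\mathrm{size}_M(c_{\hat\gamma})$. For each $t$ pick $g_t\in\Gamma$ with $d_{\widetilde M}(g_t\tilde p,\sigma(t))=d_{\widetilde M}(\Gamma\tilde p,\sigma(t))$; then, using the triangle inequality for the distance to a set together with the key input,
\[
d_{\widetilde M}(\hat\Gamma\tilde p,\sigma(t))\le d_{\widetilde M}(\hat\Gamma\tilde p,g_t\tilde p)+d_{\widetilde M}(g_t\tilde p,\sigma(t))\le D+\mathrm{size}_M(c_{\hat\gamma}).
\]
Taking the maximum over $t$ shows that the geodesic loop $\pi_{\hat M}\circ\sigma$, a minimal-length representative of $\hat\gamma$ in $\hat M$, has size at most $D+\mathrm{size}_M(c_{\hat\gamma})$; since the representative $\hat c_{\hat\gamma}$ used to define $E(\hat M,\hat p)$ is chosen of smallest size among minimal-length loops, $\mathrm{size}_{\hat M}(\hat c_{\hat\gamma})\le D+\mathrm{size}_M(c_{\hat\gamma})$. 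Dividing by $|\hat\gamma|$ and passing to the limsup over $\hat\gamma\in\hat\Gamma$ with $|\hat\gamma|\to\infty$ (possible since $\hat\Gamma$ is infinite), the term $D/|\hat\gamma|$ drops out, and as the limsup over $\hat\Gamma$ is dominated by the limsup over all of $\Gamma$, I conclude $E(\hat M,\hat p)\le E(M,p)$.

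The only genuine obstacle is conceptual: the obvious monotonicity of sizes under a covering projection runs against the direction of the claimed inequality, so one must locate the compensating estimate — the uniform bound $D$ on the distance from an arbitrary $\Gamma$-orbit point to the sub-orbit $\hat\Gamma\tilde p$, which exists precisely because $F$ is a finite cover. Once that is in hand the rest is a single triangle-inequality computation; in particular the argument is purely metric and uses the curvature hypothesis only through completeness of $\widetilde M$.
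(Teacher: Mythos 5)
Your proof is correct and follows essentially the same route as the paper's: both arguments reduce to the observation that the finite fiber $F^{-1}(p)$ has bounded diameter $D$, so the size of a minimal representing loop measured in $\hat{M}$ versus in $M$ differs by at most $D$, a discrepancy that washes out after dividing by $|\gamma|\to\infty$. The only difference is bookkeeping: you lift the loop chosen in $M$ to the universal cover and use $d(\hat{\Gamma}\tilde{p},\cdot)\le d(\Gamma\tilde{p},\cdot)+D$, whereas the paper pushes the loops chosen in $\hat{M}$ down via $F$ and compares balls around $\hat{p}$ with balls around $F^{-1}(p)$; your version also handles the smallest-size tie-breaking in the definition of the escape rate more explicitly, which is a minor but welcome refinement.
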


\begin{proof}
	First note that we can naturally identify $\pi_1(\hat{M},\hat{p})$ as a subgroup of $\pi_1(M,p)$, namely $F_\star (\pi_1(\hat{M},\hat{p}))\subseteq \pi_1(M,p)$ via the injection $F_\star:\pi_1(\hat{M},\hat{p})\to \pi_1(M,p)$. Let $\gamma\in \pi_1(\hat{M},\hat{p})$ and let $\sigma$ be a minimal representing geodesic loop of $\gamma$ at $\hat{p}$. Then $F(\sigma)$ is a minimal representing geodesic loop of $F_\star(\gamma)\in \pi_1(M,p)$. We have
	\begin{align*}
		d_H(p,F(\sigma))&=\inf\{R>0|B_R(p)\supseteq F(\sigma)\}\\
		&=\inf\{R>0|  B_R(F^{-1}(p))\supseteq \sigma \}.
	\end{align*}
	Because $F$ is a finite cover, $F^{-1}(p)$ consists of finitely many points. Let $D>0$ be the diameter of $F^{-1}(p)$, then
	$$B_{R+D}(\hat{p})\supseteq B_R(F^{-1}(p))$$
	for all $R>0$. It follows that
	$$\inf\{R>0|  B_R(F^{-1}(p))\supseteq \sigma \}\ge \inf\{R>0|B_R(\hat{p})\supseteq \sigma\}-D=d_H(\hat{p},\sigma)-D.$$
	
	For a sequence of elements $\gamma_i\in \pi_1(\hat{M},\hat{p})$ and their corresponding minimal representing geodesic loops $\sigma_i$ such that
	$$\lim\limits_{i\to\infty} \dfrac{d_H(\hat{p},\sigma_i)}{\mathrm{length}(\sigma_i)}=E(\hat{M},\hat{p}),$$
	we have
	$$\dfrac{d_H(p,F(\sigma_i))}{\mathrm{length}(F(\sigma_i))}\ge \dfrac{d_H(\hat{p},\sigma_i)}{\mathrm{length}(\sigma_i)}\to E(\hat{M},\hat{p}).$$
	This shows that $E(M,p)\ge E(\hat{M},\hat{p})$.
\end{proof}

With Lemma \ref{index_escape_rate}, we can assume that $\pi_1(M,p)$ is nilpotent without loss of generality when proving Theorem A. In fact, because $E(M,p)\not=1/2$, $\pi_1(M)$ is finitely generated. By \cite{Mil,Gro_poly}, $\pi_1(M)$ has a nilpotent subgroup $N$ of finite index; moreover, according to \cite{KW}, we can assume that the index of $N$ is bounded by some constant $C(n)$. Let $\hat{M}=\widetilde{M}/N$ be an intermediate cover of $M$ and let $\hat{p}\in\hat{M}$ be a lift of ${p}$. Lemma \ref{index_escape_rate} assures that $E(\hat{M},\hat{p})\not=1/2$. In order to prove Theorem A, it suffices to show that $N$ is virtually abelian and further bound the index when $\widetilde{M}$ has Euclidean volume growth.

\section{Asymptotic orbits of nilpotent group actions}\label{sec_nil}

In this section, we always assume that $(M,p)$ is an open $n$-manifold with $\mathrm{Ric}\ge 0$ and $E(M,p)\not=1/2$. Due to Lemma \ref{index_escape_rate}, we will also assume that $\pi_1(M,p)$ is an infinite nilpotent group, denoted by $N$.

The goal of this section is to study the properties of asymptotic equivariant cones of $(\widetilde{M},N)$ when $\widetilde{M}$ is conic at infinity. In particular, we will show that there is an integer $k$ for any $(Y,y,G)\in \Omega(\widetilde{M},N)$, the orbit $Gy$ must be homeomorphic to $\mathbb{R}^k$ (see Proposition \ref{topol_dim_group}).
 
\begin{lem}\label{midpt_orb}
	Let $(Y,y,G)\in\Omega(\widetilde{M},N)$. For any point $gy\in Gy$ that is not $y$, there is a minimal geodesic $\sigma$ from $y$ to $gy$ and an orbit point $g'y\in Gy$ such that
	$$d(m,g'y)<\frac{1}{2}\cdot d(y,gy),$$
	where $m$ is the midpoint of $\sigma$. 
\end{lem}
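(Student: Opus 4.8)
The plan is to trace the inequality back to the escape rate condition $E(M,p)\neq 1/2$ applied to geodesic loops in $M$. The key idea is that an orbit point $gy$ in the asymptotic cone is the rescaled limit of points $\gamma_i\tilde p$ in $\widetilde M$, where $\gamma_i \in N$ and the lengths $|\gamma_i| = d(\gamma_i\tilde p, \tilde p)$ scale like $r_i \cdot d(y,gy)$. First I would fix such a sequence $\gamma_i \in N$ realizing the orbit point $gy$ in the limit $(r_i^{-1}\widetilde M, \tilde p, N) \to (Y,y,G)$, and pass to the minimal representing geodesic loops $c_{\gamma_i}$ at $p$. Lifting $c_{\gamma_i}$ starting at $\tilde p$ gives a minimal geodesic $\tilde c_{\gamma_i}$ from $\tilde p$ to $\gamma_i \tilde p$ in $\widetilde M$, whose midpoint $\tilde m_i$ projects to the midpoint $m_i$ of $c_{\gamma_i}$ downstairs.

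Next I would use the escape rate bound. Since $E(M,p) \neq 1/2$, there is some $\epsilon_0 > 0$ and a threshold so that for all $\gamma$ with $|\gamma|$ large enough, $\mathrm{size}(c_\gamma)/\mathrm{length}(c_\gamma) \le 1/2 - \epsilon_0$; more carefully one should take the $\limsup$ into account, but the point is that the midpoint $m_i$ of $c_{\gamma_i}$ satisfies $d(p, m_i) \le (1/2 - \epsilon_0)\,|\gamma_i|$ for $i$ large, by definition of size (the size is the Hausdorff distance from $p$, which bounds the distance to any point on the loop, in particular its midpoint). Now I pull this back to $\widetilde M$: the point $\tilde m_i$ lies over $m_i$, so there is some deck transformation $\gamma_i' \in N$ with $d(\gamma_i' \tilde p, \tilde m_i) = d(\tilde p, m_i\text{'s fiber}) \le d(p, m_i) \le (1/2-\epsilon_0)|\gamma_i|$ — here I use that the distance in $\widetilde M$ from $\tilde m_i$ to the $N$-orbit of $\tilde p$ equals $d(p,m_i)$ in $M$. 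Rescaling by $r_i^{-1}$ and passing to the limit: $\tilde m_i/r_i \to m$, the midpoint of a minimal geodesic $\sigma$ from $y$ to $gy$ (after extracting a convergent subsequence of the midpoints, using that minimal geodesics converge to minimal geodesics under GH convergence); $\gamma_i' \tilde p / r_i \to g' y$ for some $g' \in G$ (after a further subsequence, using properness of the orbit); and the distance bound passes to the limit to give $d(m, g'y) \le (1/2 - \epsilon_0) d(y, gy) < \frac12 d(y,gy)$.

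The main obstacle I expect is making the limiting argument for $\gamma_i'$ rigorous: one needs $\gamma_i' \tilde p / r_i$ to converge to a point in the orbit $Gy$, which requires that the $\gamma_i'$ stay at bounded rescaled distance from $\tilde p$ (this is automatic from the bound $d(\gamma_i'\tilde p, \tilde p) \le d(\gamma_i'\tilde p,\tilde m_i) + d(\tilde m_i,\tilde p) \le (1/2-\epsilon_0)|\gamma_i| + \frac12|\gamma_i| \le |\gamma_i|$, so rescaled lengths are bounded by $d(y,gy)+o(1)$), and then invoking the structure of equivariant Gromov--Hausdorff convergence to produce the limit isometry $g' \in G$ with $g'y = \lim \gamma_i'\tilde p/r_i$. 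A secondary technical point is handling the $\limsup$ in the definition of $E(M,p)$ correctly: if the $|\gamma_i| \to \infty$ (which holds precisely when $gy \neq y$, since $d(\gamma_i\tilde p,\tilde p) = r_i\,d(y,gy)(1+o(1)) \to \infty$), then the ratios $\mathrm{size}(c_{\gamma_i})/|\gamma_i|$ are eventually bounded above by $E(M,p) + \delta < 1/2$ for suitable small $\delta$, which is exactly what is needed. I would also remark that the convergence of midpoints of $\tilde c_{\gamma_i}$ to the midpoint of a specific minimal geodesic $\sigma$ in $Y$ uses a standard Arzelà--Ascoli / limit-of-geodesics argument, and one does not get to choose $\sigma$ in advance — it is whatever limit geodesic the chosen subsequence produces.
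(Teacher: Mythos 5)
Your proposal is correct and follows essentially the same route as the paper: take the sequence $\gamma_i\to g$, lift the minimal representing loops $c_{\gamma_i}$ to minimal geodesics $\tilde c_{\gamma_i}$ from $\tilde p$ to $\gamma_i\tilde p$, use that the size of $c_{\gamma_i}$ (which is eventually at most $(E+\delta)|\gamma_i|$ with $E+\delta<1/2$) bounds the distance from any point of $\tilde c_{\gamma_i}$ to the orbit $N\tilde p$, and pass these bounds to the rescaled limit to get the limit geodesic $\sigma$ and the orbit point $g'y$ near its midpoint. The technical points you flag (subconvergence of the bounded-displacement deck transformations $\gamma_i'$ to some $g'\in G$, convergence of geodesics, and handling the $\limsup$) are exactly the ones the paper's proof handles implicitly, and your treatment of them is sound.
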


\begin{proof}
	Let $E=E(M,p)<1/2$. Let $r_i\to\infty$ such that 
	$$(r_i^{-1}\widetilde{M},p,N)\overset{GH}\longrightarrow (Y,y,G)$$
	and let $\gamma_i\in N$ such that $\gamma_i\overset{GH}\to g\in G$ with respect to the above convergence. Let $c_i$ be a sequence of minimal geodesic loops based at $p$ representing $\gamma_i$. By the definition of $E(M,p)$, we have
	$$\limsup_{i\to\infty} \dfrac{d_H(p,c_i)}{\mathrm{length}(c_i)}\le E.$$
	For each $i$, we lift $c_i$ to $\tilde{c_i}$ as a minimal geodesic from $\tilde{p}$ to $\gamma_i\tilde{p}$. Let $R_i=d_H(p,c_i)$ and $d_i=\mathrm{length}(c_i)=\mathrm{length}(\tilde{c_i})$. $R_i$ is also the smallest radius such that $\overline{B_{R_i}(N\tilde{p})}$ covers $\tilde{c_i}$. Passing to a subsequence, we obtain
	$$(r_i^{-1}\widetilde{M},p,N,\tilde{c_i})\overset{GH}\longrightarrow (Y,y,G,\sigma),\quad r_i^{-1}d_i\to d(y,gy),\quad r_i^{-1}R_i\to R.$$
	The above $\sigma$ is a limit minimal geodesic from $y$ to $gy$. Moreover, $\sigma$ is contained in $\overline{B_R(Gy)}$; in particular, let $m$ be the midpoint of $\sigma$, then $d(m,g'y)\le R$ for some $g'\in G$. Thus
	$$d(m,g'y)\le R=\lim\limits_{i\to\infty} r_i^{-1}R_i=\lim\limits_{i\to\infty} \dfrac{r_i^{-1}R_i}{r_i^{-1}d_i}\cdot r_i^{-1}d_i \le E\cdot d(y,gy).$$
\end{proof}

Lemma \ref{midpt_orb} states that for some minimal geodesic $\sigma$ from $y$ to $gy$, its midpoint is closer to $Gy$ than the endpoints of $\sigma$. Next, we show that this property implies the connectedness of $Gy$.

\begin{prop}\label{cnt_orb}
	The orbit $Gy$ is connected for all $(Y,y,G)\in \Omega(\widetilde{M},N)$.
\end{prop}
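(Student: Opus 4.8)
The plan is to argue by contradiction, applying Lemma \ref{midpt_orb} at an orbit point that \emph{minimizes} distance to the ``far'' part of $Gy$, and then exploiting that $G$ acts on $Gy$ by isometries. First recall that $Y$, being an asymptotic cone of $\widetilde{M}$, is a proper geodesic space, and that $G$ is a closed subgroup of $\mathrm{Isom}(Y)$; hence the $G$-action on $Y$ is proper, the orbit $Gy$ is a closed subset of $Y$, and the orbit map induces a homeomorphism $G/G_y\cong Gy$. In particular $Gy$ is locally connected, so the connected component $C$ of $y$ in $Gy$ is open (and closed) in $Gy$.

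Suppose for contradiction that $Gy\neq C$. Since $C$ is open in $Gy$, there is $\rho>0$ with $B_\rho(y)\cap Gy\subseteq C$; and since $Gy\setminus C$ is closed in $Gy$, hence in $Y$, and $Y$ is proper, the quantity $\delta:=d\big(y,\,Gy\setminus C\big)$ is attained and satisfies $\delta\ge\rho>0$. Choose $g_*\in G$ with $g_*y\in Gy\setminus C$ and $d(y,g_*y)=\delta$. Set also $H:=\{g\in G: gy\in C\}$; since each $g\in G$ acts as a homeomorphism of $Gy$ sending $C$ onto the component of $gy$, we have $H=\{g\in G: gC=C\}$, which is a subgroup of $G$, and $g_*\notin H$.

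Now apply Lemma \ref{midpt_orb} to the orbit point $g_*y\neq y$: there is a minimal geodesic $\sigma$ from $y$ to $g_*y$, with midpoint $m$ (so $d(y,m)=d(m,g_*y)=\delta/2$), and an orbit point $g_1y\in Gy$ with $d(m,g_1y)<\delta/2$. Then
$$d(y,g_1y)\le d(y,m)+d(m,g_1y)<\tfrac{\delta}{2}+\tfrac{\delta}{2}=\delta,$$
so $g_1y\in B_\delta(y)\cap Gy\subseteq C$, i.e. $g_1\in H$; and likewise $d(g_*y,g_1y)\le d(g_*y,m)+d(m,g_1y)<\delta$. On the other hand, since $g_*$ is an isometry of $Y$ we have $d(g_*y,g_1y)=d\big(y,\,g_*^{-1}g_1y\big)$, and since $H$ is a subgroup with $g_1\in H$ but $g_*\notin H$, the element $g_*^{-1}g_1$ lies outside $H$, so $g_*^{-1}g_1y\notin C$ and hence $d(y,g_*^{-1}g_1y)\ge\delta$. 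This contradicts $d(g_*y,g_1y)<\delta$. Therefore $Gy=C$ is connected.

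I expect the only delicate point to be the reduction to a distance-minimizing orbit point $g_*y$: it is essential that the infimum $\delta$ be \emph{attained}, since Lemma \ref{midpt_orb} supplies only the strict inequality $d(m,g_1y)<\tfrac12 d(y,g_*y)$, and one needs this to push the new orbit point $g_1y$ strictly into $C$. Properness of $Y$ (hence closedness of $Gy$ and of $Gy\setminus C$, and local connectedness of $Gy$) is exactly what makes this work. If one prefers to bypass properness-of-the-orbit considerations, one can instead iterate Lemma \ref{midpt_orb} along chains of orbit points, using the sharper bound $d(m,g'y)\le E(M,p)\cdot d(y,gy)$ extracted from its proof together with $E(M,p)<1/2$, to connect $y$ to any $gy\in Gy$ by arbitrarily fine chains inside $Gy$; as the distance from $C$ to $Gy\setminus C$ is at least $\rho>0$ by the same homogeneity argument, no such chain can leave $C$, again forcing $Gy=C$.
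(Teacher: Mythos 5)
Your proof is correct and follows essentially the same route as the paper: argue by contradiction, take an orbit point of $Gy\setminus C$ at minimal distance from $y$, apply Lemma \ref{midpt_orb} to its midpoint, and use that $g_*$ is an isometry to translate the resulting orbit point back to $y$ and contradict minimality; your stabilizer subgroup $H$ of the component $C$ plays exactly the role of the paper's ``nearest component'' claim $d(y,\mathcal{C}_1)=d(\mathcal{C}_0,\mathcal{C}_1)$. The only point to make explicit is that local connectedness of $Gy$ (so that $C$ is open and $\delta>0$) rests on $G$ being a Lie group, via Colding--Naber, which the paper also invokes at the corresponding step.
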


\begin{proof}
	We argue by contradiction. Suppose that $Gy$ is not connected. Let $\mathcal{C}_0$ be the connected component of $Gy$ containing $y$. Note that $\mathcal{C}_0=G_0y$, where $G_0$ is the identity component subgroup of $G$. Because $G$ is a Lie group and $Gy$ is not connected, there is a different component $\mathcal{C}_1$ of $Gy$ such that
	$$d(\mathcal{C}_0,\mathcal{C}_1)=d(y,\mathcal{C}_1)=\min_{gy\in Gy-\mathcal{C}_0}d(y,gy)>0.$$
	Let $gy\in \mathcal{C}_1$ such that $d(y,gy)=d(y,\mathcal{C}_1)$.
	
	\textbf{Claim:} $d(y,\mathcal{C}_1)=d(\mathcal{C}_0,\mathcal{C}_1)$. In fact, suppose that $z_0\in \mathcal{C}_0$ and $z_1\in\mathcal{C}_1$ such that
	$d(z_0,z_1)<d(y,\mathcal{C}_1)$. We can write $z_0=g_0y$ and $z_1=g_1y$, where $g_0\in G_0$ and $g_1\in G-G_0$. Thus 
	$$d(y,\mathcal{C}_1)>d(g_0y,g_1y)=d(y,g_0^{-1}g_1y).$$
	It follows from the choice of $\mathcal{C}_1$ that 
	$g_0^{-1}g_1y\in \mathcal{C}_0=G_0 y.$
	This leads to $$z_1=g_1y\in G_0y=\mathcal{C}_0;$$
	a contradiction.

	 By Lemma \ref{midpt_orb}, there is a minimal geodesic $\sigma$ from $y$ to $gy$ and a point $g'y\in Gy$ such that
	$$d(m,g'y)< \dfrac{1}{2} \cdot d(y,gy),$$
	where $m$ is the midpoint of $\sigma$. Then
	$$d(y,g'y)\le d(y,m)+d(m,g'y)<d(y,gy)=d(\mathcal{C}_0,\mathcal{C}_1);$$
	$$d(gy,g'y)\le d(gy,m)+d(m,g'y)<d(gy,y)=d(\mathcal{C}_1,\mathcal{C}_0).$$
	By our choice of $\mathcal{C}_1$ as the closest component to $\mathcal{C}_0$, $\mathcal{C}_0$ is also the closest component to $\mathcal{C}_1$. The first inequality above implies $g'y\in\mathcal{C}_0$, while the second one implies $g'y\in \mathcal{C}_1$; a contradiction.
\end{proof}

Starting from Lemma \ref{orbit_commute} below, we will assume that the universal cover $\widetilde{M}$ is conic at infinity for the rest of this section.

\begin{lem}\label{orbit_commute}
	Let $(Y,y,G)\in \Omega(\widetilde{M},N)$. Then $g_1g_2y=g_2g_1y$ for all $g_1,g_2\in G$.
\end{lem}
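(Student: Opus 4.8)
The plan is to exploit the metric cone structure of $Y$ together with the near-abelianness already available from Lemma~\ref{cone_isom_central}. Since $\widetilde{M}$ is conic at infinity, any $(Y,y,G)\in\Omega(\widetilde{M},N)$ is a metric cone with vertex $y$, so we may write $Y=\mathbb{R}^k\times C(Z)$ with $C(Z)$ line-free, and $\mathrm{Isom}(Y)=\mathrm{Isom}(\mathbb{R}^k)\times\mathrm{Isom}(C(Z))$. Writing $g_i=(A_i,v_i,\alpha_i)$, recall from the excerpt that every orbit point $g y$ lies in the Euclidean factor $\mathbb{R}^k\times\{z\}$, so $g_i y$ is determined entirely by $v_i$ (indeed $g_i y = (v_i, z)$, identifying $\mathbb{R}^k$ with $\mathbb{R}^k\times\{0\}$ at $y$). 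Using the multiplication law $(A_1,v_1,\alpha_1)(A_2,v_2,\alpha_2)=(A_1A_2, A_1v_2+v_1,\alpha_1\alpha_2)$, the two orbit points to be compared are $g_1g_2 y$, which corresponds to the $\mathbb{R}^k$-vector $A_1 v_2 + v_1$, and $g_2 g_1 y$, corresponding to $A_2 v_1 + v_2$. So the statement $g_1g_2y=g_2g_1y$ reduces to the purely Euclidean identity $A_1 v_2 + v_1 = A_2 v_1 + v_2$, equivalently $(A_1 - I)v_2 = (A_2 - I)v_1$.

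To prove this identity I would pass to a good finite-index subgroup. By Lemma~\ref{cone_isom_central}, $G$ has a central subgroup $H$ of finite index; in particular $G_0\subseteq H$ and $H$ is abelian. First I would treat the case where $g_1,g_2\in H$: then $g_1,g_2$ commute outright, so $g_1g_2y=g_2g_1y$ trivially. The work is to upgrade commuting-on-the-orbit from the finite-index subgroup $H$ to all of $G$. Here I would use that $H$ has finite index together with the fact that $G y$ is connected (Proposition~\ref{cnt_orb}), which forces $G_0 y = G y$ — more precisely, connectedness of the orbit means every orbit point is reached already by $G_0$, hence by $H$. So for any $g\in G$ there is $h\in H$ with $gy = hy$, i.e. $h^{-1}g$ fixes $y$; and a point-stabilizer in $\mathrm{Isom}(Y)$ of the vertex-adjacent point $y\in\mathbb{R}^k\times\{z\}$ consists of isometries whose $\mathbb{R}^k$-part fixes the chosen origin, i.e. $v$-component zero. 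Writing $g = h k$ with $k$ in this stabilizer, and similarly $g' = h'k'$, I would compute $g g' y$ and $g' g y$ using the multiplication law, observing that the $v$-components of $k,k'$ vanish and that $h,h'$ commute with everything; the cross terms involving the rotational parts $A$ acting on zero vectors drop out, leaving $ggy'=g'gy$.

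Alternatively — and this may be cleaner to write — I would argue directly with the reduced identity $(A_1-I)v_2=(A_2-I)v_1$. The key observation is that $g_i y$ depends only on $v_i$, and since $Gy=G_0 y=Hy$, for each $g_i$ I can replace it by an element $h_i\in H$ with $h_i y = g_i y$, i.e. with the same $v_i$; since $h_1,h_2$ commute in the abelian group $H$, Lemma~\ref{isom_eu_commute} gives that their linear parts commute, and then the commutation relation $h_1h_2=h_2h_1$ read off in the $\mathbb{R}^k$-factor is exactly $(A_1'-I)v_2=(A_2'-I)v_1$ where $A_i'$ is the linear part of $h_i$ — but I still need $A_i'$ acting on $v_j$ to agree with $A_i$ acting on $v_j$. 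To get this I would note that replacing $g_i$ by $h_i$ changes the linear part by an element of the stabilizer, which acts trivially on the relevant vectors because those vectors lie in the orbit and the stabilizer fixes orbit-reachable directions appropriately — this is the step I expect to require the most care. The main obstacle, then, is bookkeeping: making sure that passing from $g_i$ to a representative in the central finite-index subgroup $H$ does not alter the orbit points $g_1g_2y$ and $g_2g_1y$, only the unseen $C(Z)$-components and the vertex-fixing part of the $\mathbb{R}^k$-isometry. Once that is pinned down, the commutativity of $H$ and Lemma~\ref{isom_eu_commute} close the argument.
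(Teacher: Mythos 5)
Your main argument (the second paragraph) is correct and is essentially the paper's proof: decompose each $g\in G$ as $g=h\cdot k$ with $h$ in a central subgroup containing $G_0$ (using connectedness of $Gy$ from Proposition~\ref{cnt_orb} and centrality from Lemma~\ref{cone_isom_central}) and $k$ fixing $y$, then push the stabilizer elements onto $y$ and commute the central parts. The coordinate bookkeeping in your first and third paragraphs (reducing to $(A_1-I)v_2=(A_2-I)v_1$, and the ``alternative'' route whose stabilizer step you flag as delicate) is unnecessary, since the purely group-theoretic computation $g_1g_2y=h_1h_2y=h_2h_1y=g_2g_1y$ already closes the argument, exactly as in the paper.
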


\begin{proof}
	Let $G_0$ be the identity component subgroup of $G$. Because $Gy$ is connected by Proposition \ref{cnt_orb}, we have $Gy=G_0y$. In other words, for any $g\in G$, we can write $gy=hy$ for some $h\in G_0$. Then any $g\in G$ can be written as the product of an element in $G_0$ and an element in the isotropy subgroup at $y$; namely, $g=h\cdot (h^{-1}g)$, where $h\in G_0$ and $h^{-1}g$ fixes $y$. 
	
	Let $g_1,g_2\in G$. We write 
	$$g_1=h_1\cdot \alpha_1,\quad g_2=h_2\cdot \alpha_2,$$
	where $h_1,h_2\in G_0$ and $\alpha_1,\alpha_2$ belong to the isotropy subgroup at $y$. Because $N$ is nilpotent, $G$ must be nilpotent as well. According to Lemma \ref{cone_isom_central}, $G_0$ is central in $G$. Thus
	$$g_1g_2y=h_1\alpha_1h_2\alpha_2y=h_1h_2y=h_2h_1y=h_2\alpha_2h_1\alpha_1y=g_2g_1y.$$
\end{proof}

\begin{defn}\label{def_type_kd}
	Let $(Y,y,G)$ be a space, where $G$ is a closed nilpotent Lie subgroup of $\mathrm{Isom}(Y)$. Let $T$ be a maximal torus of $G_0$. Let $k\in\mathbb{N}$ and $d\in[0,\infty)$. We say that $(Y,y,G)$ is of \textit{type} $(k,d)$, if the orbit $Gy$ is connected and
	$$\dim G - \dim T =k,\quad \mathrm{diam}(Ty)=d.$$
\end{defn}


\begin{lem}\label{cones_blowdown}
	Let $C(Z)$ be a metric cone with a vertex $z$. Let $G$ be a closed nilpotent subgroup of $\mathrm{Isom}(C(Z))$. Suppose that $(C(Z),z,G)$ is of type $(k,d)$. For a sequence $r_i\to \infty$, we consider corresponding blow-down sequence:
	$$(r_i^{-1}C(Z),z,G)\overset{GH}\longrightarrow (C(Z),z,G').$$
	Then the orbit $G'z$ is a $k$-dimensional Euclidean factor in $C(Z)$.
\end{lem}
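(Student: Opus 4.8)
The plan is to combine the self-similarity of the cone with the structure theory of abelian groups of Euclidean isometries.

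\emph{Setting up.} By Cheeger--Colding's splitting theorem \cite{CC96} (cf.\ \S\ref{sec_pre}) I write $C(Z)=\mathbb{R}^{m}\times C(Z_{0})$ with $C(Z_{0})$ containing no lines; then $C(Z_{0})$ has a unique vertex $z_{0}$, every isometry of $C(Z)$ respects the splitting, and every isometry of $C(Z_{0})$ fixes $z_{0}$. In particular $z=(0,z_{0})$, and the orbit $Gz$ is contained in the Euclidean factor $\mathbb{R}^{m}\times\{z_{0}\}\cong\mathbb{R}^{m}$; writing $\pi\colon\mathrm{Isom}(C(Z))\to\mathrm{Isom}(\mathbb{R}^{m})$ for the projection, $Gz$ is the $\pi(G)$-orbit of $0$. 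The point of working with the cone is that the dilations $\delta_{\lambda}$ based at $z$ are homotheties of ratio $\lambda$ that preserve the product (scaling $\mathbb{R}^{m}$ linearly). Hence, after identifying $r_{i}^{-1}C(Z)$ with $C(Z)$ through $\delta_{1/r_{i}}$, the convergence $(r_{i}^{-1}C(Z),z,G)\to(C(Z),z,G')$ becomes the convergence of the conjugate groups $\delta_{1/r_{i}}\,G\,\delta_{r_{i}}$, and the limit orbit is
$$G'z=\lim_{i\to\infty}\delta_{1/r_{i}}(Gz)=\Big(\lim_{i\to\infty}\tfrac{1}{r_{i}}(Gz)\Big)\times\{z_{0}\},$$
the tangent cone at infinity of the subset $Gz\subseteq\mathbb{R}^{m}$ (in the pointed Hausdorff sense). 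So it suffices to understand $Gz\subseteq\mathbb{R}^{m}$ up to bounded Hausdorff distance.

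\emph{The orbit is coarsely a linear subspace.} Since isometric actions are proper, $\overline{Gz}$ is the orbit $\bar G\cdot 0$ of the closed nilpotent Lie group $\bar G:=\overline{\pi(G)}$; because $Gz$ is connected (this is part of $(C(Z),z,G)$ being of \emph{type} $(k,d)$, Definition \ref{def_type_kd}), it equals $\bar G_{0}\cdot 0$ for $\bar G_{0}$ the identity component. Now $\bar G_{0}$ is abelian: its rotational part is a connected nilpotent subgroup of $O(m)$, whose closure is a compact connected nilpotent — hence toral — group, so the rotational parts commute and Lemma \ref{isom_eu_commute} applies. Writing $\bar G_{0}\cong\mathbb{R}^{a}\times T^{b}$ with $T^{b}$ its maximal torus, the $T^{b}$-orbit of $0$ is bounded (a compact group), while $\mathbb{R}^{a}$, being a closed subgroup, has trivial isotropy at $0$ (a compact subgroup of $\mathbb{R}^{a}$), so its orbit is an embedded copy of $\mathbb{R}^{a}$; splitting $\mathbb{R}^{m}=V_{0}\oplus V_{1}$ into the common fixed subspace of all rotational parts and its complement, $\mathbb{R}^{a}$ acts on $V_{0}$ by translations — giving a linear subspace $L\subseteq V_{0}$ with $\dim L=a$ — and on $V_{1}$ (a sum of nontrivial weight spaces) the translational cocycle is a coboundary, because every continuous $1$-cocycle of $\mathbb{R}^{a}$ valued in a nonzero-weight rotation module has the form $t\mapsto\xi-\rho(t)\xi$, so the $V_{1}$-component of the orbit is bounded. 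Therefore $\overline{Gz}=\bar G_{0}\cdot 0$, hence also $Gz$, lies within bounded Hausdorff distance of the linear subspace $L$. Consequently $\lim_{i}\tfrac{1}{r_{i}}(Gz)=L$, so $G'z=L\times\{z_{0}\}$ is a genuine $a$-dimensional linear subspace, and $\mathbb{R}^{m}=L\oplus L^{\perp}$ exhibits $C(Z)=L\times(L^{\perp}\times C(Z_{0}))$ with $G'z=L$ a Euclidean factor.

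\emph{The dimension is $k$.} It remains to see $a=k=\dim G-\dim T$. Every isometry of $C(Z_{0})$ fixes the vertex $z_{0}$, so the $\mathrm{Isom}(C(Z_{0}))$-component of any isometry of $C(Z)$ has bounded orbits; hence $\ker(\pi|_{G_{0}})$ has bounded orbits and is therefore a compact subgroup of $G_{0}$, so it lies in the maximal torus $T$. A short dimension count then gives $\dim\pi(G_{0})-\dim\pi(T)=\dim G_{0}-\dim T=\dim G-\dim T=k$ with $\pi(T)$ the maximal torus of the connected abelian group $\pi(G_{0})$; keeping track of which directions survive in $\bar G_{0}$ (the non-compact ones, of which there are $k$), one gets $a=k$. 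The same bounded-orbit observation identifies the compact part of the orbit, of diameter $\mathrm{diam}(Ty)=d$, with the torus $T$.

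The step I expect to be the main obstacle is the second one: showing that the orbit of the connected abelian isometry group $\bar G_{0}$ is, up to bounded Hausdorff distance, exactly an $a$-dimensional affine subspace, and then that $a$ equals $\dim G-\dim T$ rather than some larger integer produced by passing to closures. The definition of \emph{type} $(k,d)$ — phrased via $\dim G-\dim T$ together with $\mathrm{diam}(Ty)$ — is tailored precisely to make this bookkeeping come out cleanly; the cocycle-is-coboundary estimate on the weight-space complement and the identification of the bounded part of the orbit with $T$ carry the real content, while everything else is formal manipulation with the homothety structure of the cone and with equivariant Gromov--Hausdorff limits \cite{FY}.
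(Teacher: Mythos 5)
Your overall route is essentially the paper's: you split off the common fixed subspace of the rotational parts (your $V_0$ is the paper's $E$), argue that the component of the orbit in the complement stays bounded, and then identify the blow-down of the orbit with a $k$-dimensional space of translations inside $V_0$. The genuine differences are that you pass to the closure $\bar G_0=\overline{\pi(G_0)}$ and argue via a cocycle/coboundary statement, whereas the paper first reduces to a connected group $G$ acting on $\mathbb{R}^l$ (no closures, no $\pi$) and proves the boundedness of the complement component by an explicit commutator computation. This difference is exactly where your write-up has two gaps. First, the coboundary claim as you state it is false in the case that matters: $V_1$ is defined as the sum of weight planes that are nontrivial for \emph{all} of $\bar G_0\cong\mathbb{R}^a\times T^b$, but on a plane where only the torus factor rotates and $\rho|_{\mathbb{R}^a}$ is trivial, a continuous $1$-cocycle of $\mathbb{R}^a$ is just a homomorphism $t\mapsto tv$, which is not of the form $t\mapsto\xi-\rho(t)\xi$ and not bounded unless $v=0$. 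That vanishing is true, but it needs commutativity with the torus elements that do rotate the plane: from $g_1g_2=g_2g_1$ one gets $(A_2-I)v_1=(A_1-I)v_2$ on each invariant plane, so $A_1|_W=\mathrm{id}$ together with some $A_2|_W\neq\mathrm{id}$ forces $v_1|_W=0$. This is precisely the identity $(B-I)v^j=(A-I)w^j$ in the paper's proof; either run your cocycle argument over the whole group $\bar G_0$ (where the character is nontrivial) or quote this commutator identity directly.

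Second, the dimension count $a=k$ is asserted ("keeping track of which directions survive in $\bar G_0$") rather than proved, and you yourself flag it as the main obstacle. It is true, but two separate facts are needed: (i) no one-parameter subgroup of $G_0$ outside the maximal torus can have precompact image under $\pi$ — here you must use that $G$ is closed in $\mathrm{Isom}(C(Z))$ and that the $\mathrm{Isom}(C(Z_0))$-components lie in the compact isotropy group of the vertex $z_0$, so precompactness of the $\mathbb{R}^m$-part would force precompactness in $G_0$ itself, contradicting $G_0\cong\mathbb{R}^k\times T$ closed; this gives $a\ge k$. (ii) Taking the closure cannot create new noncompact directions: a dense connected image in $\mathbb{R}^a\times T^b$ must surject onto the $\mathbb{R}^a$ factor (a path-connected subgroup of $\mathbb{R}^a$ is a linear subspace), so $a\le k$. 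You also use $\dim L=a$ without comment, which again needs (i). None of this is hard, but as written these steps are missing; the paper avoids them entirely by working with the connected group $G$ itself (no closure) and the homomorphism $F:G\to E$, where injectivity of $F$ on the $\mathbb{R}^k$-factor follows from the precompactness argument and Lemma \ref{cone_isom_central}/Proposition \ref{topol_dim_group}-style reasoning. With these two repairs your argument goes through and is a legitimate variant of the paper's proof.
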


\begin{proof}
	Because the orbit $Gz$ is connected and contained in a Euclidean factor of $C(Z)$, it suffices to prove the statement when $C(Z)$ is a Euclidean space $\mathbb{R}^l$ and $G$ is a connected Lie group. By Lemma \ref{cone_isom_central}, $G$ is abelian. We set $z$ as the origin $0$ of $\mathbb{R}^l$, then we can write any element of $G$ in the form of $(A,v)\in SO(l)\ltimes \mathbb{R}^l$. Let 
	$$\psi: \mathrm{Isom}(\mathbb{R}^l)\to SO(l),\quad (A,v)\mapsto A$$
	be natural projection. Because $\psi(G)$ is abelian, we can decompose $\mathbb{R}^k$ into an orthogonal direct sum $E+E^\perp$, where $E$ is the maximal subspace such that $A|_{E}=\mathrm{id}|_{E}$ for all $A\in \psi(G)$. Note that by the above construction, any translation in $E$ and any $g\in G$ must commute.
	
	Let $(A,v)\in G$. We write $v=v_1+v_2$, where $v_1\in E$ and $v_2\in E^\perp$. Let $\delta\in\mathrm{Isom}(\mathbb{R}^l)$ be the translation by $-v_1$. We claim that $\delta g=(A,v_2)$ must have a fixed point. The proof of this claim is by linear algebra. We argue by contradiction. Because $\psi(G)$ is an abelian subgroup of $SO(l)$, we can further decompose $E^\perp$ into an orthogonal direct sum of subspaces with dimension at most $2$:
	$$E^\perp=E^1+E^2+...+E^m$$
	such that each $E^i$ is $\psi(G)$-invariant, where $i=1,...,m$. We write $v_2=\sum_{i=1}^m v^i$, where $v^i\in E^i$.
	By the hypothesis that $(A,v_2)$ does not have fixed points, there exists $j\in\{1,...,m\}$ such that $A|_{E^{j}}=\mathrm{id}|_{E^j}$ and $v^j\not= 0$. From the maximality of $E$ in its definition, we can find some element $(B,w)\in G$ such that $Bv^j\not= v^j$. We write
	$$w=w_1+w_2=w_1+(\textstyle \sum_{i=1}^m w^i),$$
	where $w_1\in E$, $w_2\in E^\perp$, and $w^i\in E^i$. Because $(A,v)$ commutes with $(B,w)$, by direct calculation, we have 
	$$(B-I)v^i=(A-I)w^i$$
	for all $i=1,...,m$. Taking $i=j$, we derive that
	$$0\not = (B-I)v^j=(A-I)w^j=0,$$
	a contradiction. We have verified the claim.
	
    
    By the claim, any element $g\in G$ can be written as a product $g=\delta \alpha=\alpha \delta$, where $\delta$ is a translation in $E$ and $\alpha$ has a fixed point; moreover, this expression is unique. This enables us to define a group homomorphism
        $$F:G\to E,\quad g=\delta\alpha\mapsto \delta.$$
    Noting that $(\mathbb{R}^l,z,G)$ is of type $(k,d)$, we can write $G=H\times T$, where $H$ is a closed subgroup isomorphic to $\mathbb{R}^k$ and $T$ is a torus subgroup. We remark that the choice of $H$ is not unique in general. It is clear that $F(h)\not= 0$ for all nontrivial element $h\in H$; otherwise, $\langle h\rangle $ would be contained in a compact subgroup of $\mathrm{Isom}(\mathbb{R}^l)$, which is not true. Therefore, $F(H)$ consists of all translations in a $k$-dimensional subspace in $V_1$. After blowing down
    $$(r_i^{-1}\mathbb{R}^l,z,G)\overset{GH}\longrightarrow (\mathbb{R}^l,z,G'),$$ 
    it is clear that the limit orbit $G'z$ is formed exactly by translations in $F(H)$. In particular, $G'z$ is a $k$-dimensional Euclidean subspace of $\mathbb{R}^l$.
\end{proof}

Let $(Y,y,G)\in \Omega(\widetilde{M},N)$. By the proof of \cite[Lemma 3.1]{Pan_es0}, the finite generation of $N$ implies that the orbit $Gy$ is always non-compact. In other words, let $(k,d)$ be the type of $(Y,y,G)$, then we always have $k\ge 1$.

\begin{prop}\label{topol_dim_group}
	Let $(M,p)$ be an open $n$-manifold with the assumptions in Theorem A(1). Suppose that the fundamental group $N$ is an infinite nilpotent group. Then there is an integer $k$ such that all $(Y,y,G)\in \Omega(\widetilde{M},N)$ are of type $(k,0)$.
\end{prop}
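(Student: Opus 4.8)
The plan is to analyse each equivariant asymptotic cone separately using the metric cone structure, and then to propagate the resulting control over all of $\Omega(\widetilde{M},N)$ by rescaling and by the connectedness of $\Omega(\widetilde{M},N)$. First I would fix $(Y,y,G)\in\Omega(\widetilde{M},N)$ and record its structure. Since $\widetilde{M}$ is conic at infinity, $Y$ is a metric cone with vertex $y$; splitting off lines, write $Y=\mathbb{R}^{m}\times C(Z)$ with $C(Z)$ line-free, so that $z$ is the unique vertex of $C(Z)$, every orbit point lies in $\mathbb{R}^{m}\times\{z\}$, and the minimal geodesic joining $y$ to an orbit point is the unique radial segment inside this Euclidean de Rham factor. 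By Proposition \ref{cnt_orb} the orbit $Gy$ is connected, so $Gy=G_{0}y$; by Lemma \ref{cone_isom_central} $G_{0}$ is abelian. Thus $(Y,y,G)$ has a well-defined type $(k,d)$, with $k\ge 1$ because $Gy$ is non-compact (finite generation of $N$, via \cite[Lemma 3.1]{Pan_es0}). Because $\Omega(\widetilde{M},N)$ is closed under rescaling and under equivariant Gromov--Hausdorff limits (Proposition \ref{cpt_cnt}), applying Lemma \ref{cones_blowdown} (with $Y$ in place of $C(Z)$) to a blow-down of any element of type $(k,d)$ produces an element of $\Omega(\widetilde{M},N)$ of type $(k,0)$ whose orbit is a $k$-dimensional Euclidean factor; hence $\mathcal{K}:=\{k:(Y,y,G)\in\Omega(\widetilde{M},N)\text{ is of type }(k,0)\}$ is non-empty and bounded above (by $n$, say, since every orbit sits in an asymptotic cone of dimension $\le n$), so $k^{*}:=\max\mathcal{K}$ exists.

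The heart of the proof is to show that $d=0$ for every $(Y,y,G)\in\Omega(\widetilde{M},N)$, i.e. that the maximal torus of $G_{0}$ fixes $y$; this is the only place the hypothesis $E(M,p)<1/2$ is used, and it is here that a critical rescaling argument is needed. Using the homogeneity of the orbit, Lemma \ref{midpt_orb}, and the uniqueness of minimal geodesics between two points of the Euclidean factor, one first shows that $\Lambda:=Gy\subseteq\mathbb{R}^{m}$ is approximately midpoint convex: for all $v_{1},v_{2}\in\Lambda$ there is $v'\in\Lambda$ with $|v'-\tfrac{1}{2}(v_{1}+v_{2})|\le E(M,p)\,|v_{1}-v_{2}|$. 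Decomposing $\mathbb{R}^{m}=E\oplus E^{\perp}$ as in the proof of Lemma \ref{cones_blowdown} gives $\Lambda=\mathbb{R}^{k}\oplus B$ orthogonally, where $B\subseteq E^{\perp}$ is the compact orbit of the maximal torus. To see the mechanism, if $B$ were a round circle of radius $\rho>0$ with centre $c$, applying approximate midpoint convexity to a pair of antipodal points of $B$ would give $v'=(w',b')\in\mathbb{R}^{k}\oplus B$ with $\rho\le|b'-c|\le|v'-(0,c)|\le 2E(M,p)\rho$, hence $E(M,p)\ge 1/2$, a contradiction. For a general torus orbit $B$, which need not be a product of round circles, one rescales so that $\mathrm{diam}(B)$ is of order one, passes by the compactness of $\Omega(\widetilde{M},N)$ to a limit in which this torus structure survives at a definite scale, and estimates (via the parallelogram law, together with a short iteration of the approximate-midpoint property) the distance from Euclidean midpoints of far-apart pairs in $B$ to $B$ itself, again reaching a contradiction with $E(M,p)<1/2$. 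Granting this, $Gy\cong\mathbb{R}^{k}$ with $k=\dim G-\dim T$ for every $(Y,y,G)\in\Omega(\widetilde{M},N)$.

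It then remains to see that this integer $k$ is the same throughout $\Omega(\widetilde{M},N)$; I claim it equals $k^{*}$. Let $\Omega^{*}\subseteq\Omega(\widetilde{M},N)$ be the non-empty set of elements of type $(k^{*},0)$. It is closed, because orbits converge under equivariant Gromov--Hausdorff convergence and a pointed Gromov--Hausdorff limit of isometric copies of $\mathbb{R}^{k^{*}}$ is again $\mathbb{R}^{k^{*}}$. It is open: if $(Y_{i},y_{i},G_{i})\to(Y_{0},y_{0},G_{0})$ with the limit in $\Omega^{*}$, then, by the previous paragraph, each $(Y_{i},y_{i},G_{i})$ is of type $(k_{i},0)$ with orbit isometric to $\mathbb{R}^{k_{i}}$; since $k_{i}\le k^{*}$ and $\mathbb{R}^{k_{i}}\to\mathbb{R}^{k^{*}}$ forces $k_{i}=k^{*}$ for large $i$, we get $(Y_{i},y_{i},G_{i})\in\Omega^{*}$ eventually. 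By the connectedness of $\Omega(\widetilde{M},N)$ (Proposition \ref{cpt_cnt}), $\Omega^{*}=\Omega(\widetilde{M},N)$, which is the statement with $k=k^{*}$.

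The step I expect to be the main obstacle is proving $d=0$ in general: eliminating an arbitrary torus factor (possibly a ``knotted'' circle inside a Clifford torus) from the asymptotic orbit requires the critical rescaling together with some care about the linear algebra of compact abelian isometry groups of Euclidean space. Once $d=0$ is known for every equivariant asymptotic cone, the uniformity of $k$ and the whole proposition follow formally from Lemma \ref{cones_blowdown}, Lemma \ref{cone_isom_central}, and the compactness and connectedness of $\Omega(\widetilde{M},N)$.
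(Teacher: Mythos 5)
Your reduction of the proposition to the claim that $d=0$ in each cone, proved via an approximate midpoint property of the orbit, contains a genuine gap at its central step. First, the asserted orthogonal splitting $\Lambda=Gy=\mathbb{R}^{k}\oplus B$ does not follow from the decomposition in Lemma \ref{cones_blowdown} and is false in general: in that lemma the factor $\alpha$ in $g=\delta\alpha$ merely has \emph{some} fixed point, varying with $g$ and in general different from $y$, so the non-compact directions of the orbit can be coupled to rotations. For instance, the closed connected abelian group on $\mathbb{R}^{2}\oplus\mathbb{R}\oplus\mathbb{R}^{2}$ generated by a screw motion in the first three coordinates and a rotation in the last two has orbit (helix)$\times$(circle), of type $(1,d)$ with $d>0$, which contains no line and is not a product $\mathbb{R}\oplus B$. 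Since Proposition \ref{topol_dim_group} is exactly the statement that such orbits cannot occur, you cannot assume the cleaner structure while proving it. Second, even granting only what is true (the whole orbit lies at constant distance $\rho>0$ from $\mathrm{Fix}(T)$, since $G$ centralizes $T$), the midpoint estimate does not reach the threshold $1/2$ for a general torus orbit: pairs $y,\tau y$ in one torus fiber give $\mathrm{dist}\bigl(m,\Lambda\bigr)\ge\rho-\sqrt{\rho^{2}-\delta^{2}/4}$ with $\delta=d(y,\tau y)$, which exceeds $E\delta$ for every $E<1/2$ only when near-antipodal pairs exist in the fiber; for a circle acting with weights $1$ and $2$ (say) no such pair exists, and the bound degenerates to a constant well below $1/2$. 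Your remedy (``rescale, pass to a limit by compactness, parallelogram law plus a short iteration'') is precisely the part that needs a quantitative mechanism, and none is supplied; moreover the distance must be measured to the whole orbit, whose non-compact directions can approach the midpoint. Note also that Lemma \ref{midpt_orb} is not the place where the paper uses $E\ne 1/2$ to kill $d$: there it only yields connectedness of the orbit (Proposition \ref{cnt_orb}).

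The uniformity step has a second gap: type $(k,0)$ does not make the orbit an isometric copy of $\mathbb{R}^{k}$ (a helical orbit is of type $(1,0)$), and Gromov--Hausdorff limits of orbits of lower dimension can very well be higher-dimensional (helices of shrinking pitch converge to a cylinder or a plane), so your open/closed argument ``$\mathbb{R}^{k_i}\to\mathbb{R}^{k^{*}}$ forces $k_i=k^{*}$'' is unjustified. What replaces both missing ingredients in the paper is a single critical rescaling argument: a cone of type $(k,d)$ with $d>0$ blows up to a cone whose orbit is a Euclidean factor of dimension $>k$ and blows down (Lemma \ref{cones_blowdown}) to one of dimension $k$; the quantitative gap of Lemma \ref{eGH_gap_flat} between a space whose orbit is a $k_1$-dimensional Euclidean factor and any space of type $(k_2,d_2)$ with $k_2>k_1$, combined with a choice of critical scales $l_i$ interpolating between two blow-down sequences, rules out the coexistence of Euclidean-factor orbits of different dimensions, thereby proving $d=0$ and the constancy of $k$ simultaneously. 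To repair your proof you would need either a correct quantitative substitute for Lemma \ref{eGH_gap_flat} together with a genuine critical-scale selection, or a complete argument that the approximate midpoint property with constant $E<1/2$ excludes every positive-diameter torus orbit of a closed connected abelian isometry group of a cone; neither is present in the proposal.
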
 

The proof of Proposition \ref{topol_dim_group} is by contradiction and a critical rescaling argument, which implicitly uses the connectedness of $\Omega(\widetilde{M},N)$ (Proposition \ref{cpt_cnt}). This kind of arguments is also used in \cite{Pan_eu,Pan_al_stable,Pan_es0,Pan_esgap}, under different contexts, to prove certain uniform properties among all equivariant asymptotic cones. This method requires an equivariant Gromov-Hausdorff distance gap between certain spaces, which we establish below.

\begin{lem}\label{eGH_gap_flat}
	Given any integer $n\ge 2$, there is a constant $\delta(n)>0$ such that the following holds.
	
	Let $(C(Z_j),z_j)\in\mathcal{M}(n,0)$ be a metric cone with vertex $z_j$ and let $G_j$ be a closed nilpotent subgroup of $\mathrm{Isom}(C(Z_j))$, where $j=1,2$. Suppose that\\
	(1) the orbit $G_1z_1$ is a $k_1$-dimensional Euclidean factor of $C(Z_1)$,\\
	(2) the orbit $G_2z_2$ is connected and is of type $(k_2,d_2)$, where $k_2>k_1$.\\
	Then
	$$d_{GH}((C(Z_1),z_1,G_1),(C(Z_2),z_2,G_2))\ge \delta(n).$$ 
\end{lem}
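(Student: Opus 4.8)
The plan is to argue by contradiction: suppose no such $\delta(n)$ exists, so there are sequences $(C(Z_j^{(i)}),z_j^{(i)},G_j^{(i)})$ for $j=1,2$ satisfying hypotheses (1), (2) with $k_2 > k_1$ (and since $k_1,k_2 \le n$ there are only finitely many possible pairs, so after passing to a subsequence we may fix $k_1,k_2$), yet with equivariant Gromov–Hausdorff distance tending to $0$. By the compactness of pointed metric cones in $\mathcal{M}(n,0)$ together with equivariant Gromov–Hausdorff precompactness, after passing to a subsequence both sequences converge to a common limit $(C(W),w,G)$, which is again a metric cone with vertex $w$ and $G$ a closed nilpotent subgroup of $\mathrm{Isom}(C(W))$ (nilpotency of bounded length passes to limits, and closedness/Lie structure is standard). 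The strategy is then to extract a contradiction from the fact that the $j=1$ side forces the limit orbit $Gw$ to ``look $k_1$-dimensional'' while the $j=2$ side forces it to ``look at least $k_2$-dimensional''.

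First I would handle the $j=1$ side. Each $G_1^{(i)}z_1^{(i)}$ is a $k_1$-dimensional Euclidean factor, hence isometric to $\mathbb{R}^{k_1}$ and, crucially, the orbit map is a Riemannian submersion onto a flat factor splitting off $C(Z_1^{(i)})$. Equivariant convergence then gives that $Gw$ is a complete, flat, totally geodesic $k_1$-dimensional subspace sitting inside a Euclidean factor of $C(W)$ — in particular $Gw \cong \mathbb{R}^{k_1}$ and (using Lemma~\ref{cone_isom_central}, so $G_0$ is central, and that orbit points lie in the Euclidean factor) $Gw = G_0 w$ with $G_0 w$ a $k_1$-dimensional Euclidean factor. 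So the limit has a $k_1$-dimensional Euclidean-factor orbit.

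Next the $j=2$ side: each $(C(Z_2^{(i)}),z_2^{(i)},G_2^{(i)})$ is of type $(k_2,d_2^{(i)})$. The quantity $d_2^{(i)} = \mathrm{diam}(T^{(i)}z_2^{(i)})$ is bounded (a torus orbit in a fixed-scale picture; more precisely one may first rescale so that the configuration near $w$ is normalized, or simply note $\mathrm{diam}(T^{(i)}z_2^{(i)})$ stays bounded along a GH-convergent sequence), so after a further subsequence $d_2^{(i)} \to d_\infty$. Passing to the limit, the torus parts converge to a compact connected abelian subgroup whose orbit has diameter $d_\infty$, and the "$k_2$ non-compact directions" must survive: concretely, pick elements $g_1^{(i)},\dots,g_{k_2}^{(i)} \in G_2^{(i)}$ whose images generate the $\mathbb{R}^{k_2}$-factor of $G_2^{(i)}/T^{(i)}$ and whose orbit points $g_m^{(i)}z_2^{(i)}$ are, say, at distance $1$ from $z_2^{(i)}$ and "$\epsilon$-independent" (their pairwise and origin distances realize a non-degenerate $k_2$-simplex with definite lower volume bound coming from the Euclidean-factor structure of the orbit, via Lemma~\ref{cones_blowdown} applied at a suitable scale). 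These converge to $g_1,\dots,g_{k_2}\in G$ with $Gw \supseteq \{g_m w\}$ spanning a non-degenerate $k_2$-dimensional configuration, forcing $\dim Gw \ge k_2$. Combined with the first part, $k_1 = \dim(Gw) \ge k_2 > k_1$, a contradiction.

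The main obstacle I expect is making the $j=2$ non-degeneracy argument quantitative and limit-stable: one must choose the orbit points $g_m^{(i)}z_2^{(i)}$ so that their mutual distances are bounded below by a constant independent of $i$ (otherwise the limit configuration could collapse and only see $k_1$ directions), and one must ensure the torus diameter $d_2^{(i)}$ does not blow up or shrink in a way that destroys convergence — this is exactly why the statement separates the torus part (diameter $d$) from the $k$ "large" directions, and why one works with type $(k,d)$ rather than raw dimension. The clean way to get the uniform lower bound is to pass to a blow-down of each $C(Z_2^{(i)})$ first (using Lemma~\ref{cones_blowdown}, which turns the type-$(k_2,d_2^{(i)})$ orbit into an honest $k_2$-dimensional Euclidean factor), extract the normalized simplex of orbit points there with a volume bound depending only on $k_2 \le n$, and only then take the double limit; the diagonal/compactness bookkeeping to legitimize this iterated limit is the fussy part, but it is routine given Proposition~\ref{cpt_cnt} and the scaling-invariance of $\Omega$.
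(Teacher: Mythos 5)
Your strategy (argue by contradiction, extract a common equivariant limit, and compare orbit dimensions there) is not the paper's, which is a direct argument with the explicit constant $\delta(n)=\tfrac{1}{100n^2}$: one picks $e_1,\dots,e_{k_1}\in G_1$ whose orbit points form an orthogonal frame of size $1/n$ in the Euclidean-factor orbit, so that the sub-orbit they generate is $1$-dense and short words suffice; one transfers these to $G_2$, uses $k_2>k_1$ together with connectedness of $G_2z_2$ to find an orbit point $g'z_2$ at distance in $(7,8)$ from the transferred sub-orbit, and then transfers back, using that $G_1$ acts by translations on its orbit to bound every exponent by $10n$ and hence the accumulated approximation error by $1/10$, a contradiction. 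More importantly, your proposal has a genuine gap exactly at the step you yourself flag as fussy: the $j=2$ side at unit scale. Being of type $(k_2,d_2)$ gives no uniform quantitative non-degeneracy of the orbit at scale $1$, because Lemma~\ref{cones_blowdown} produces the $k_2$-dimensional Euclidean-factor picture only after blowing down, at scales depending on the individual space and action (for instance on how far the rotational centers sit and how large the torus orbit is), and these scales are unbounded over the class in the lemma. Your proposed repair --- blow down each $(C(Z_2^{(i)}),z_2^{(i)},G_2^{(i)})$ first and then take a double limit --- destroys the unit-scale closeness to the $j=1$ spaces, so the rescaled information never reconnects to the common limit $(C(W),w,G)$; the ``definite lower volume bound'' for a unit-scale $k_2$-simplex of orbit points is an unproved uniform statement that is essentially equivalent to the gap you are trying to prove.

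Two further assertions in your outline are false or unjustified. First, $d_2^{(i)}=\mathrm{diam}(T^{(i)}z_2^{(i)})$ need not stay bounded along an equivariantly convergent sequence: a circle subgroup rotating about centers escaping to infinity still contributes convergent isometries (those of bounded displacement) while its orbit diameter blows up, and in the limit it may even become an $\mathbb{R}$-subgroup; this also shows that the type of Definition~\ref{def_type_kd} is not continuous under the convergence, so hypothesis (2) transfers no direct control to the limit, and ``the $k_2$ non-compact directions must survive'' is precisely what needs proof. Second, nilpotency of the limit group requires a uniform bound on nilpotency length, which the lemma does not assume (a lesser issue, since your $j=1$ step can be run without Lemma~\ref{cone_isom_central}). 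By contrast, the paper's argument needs no limits and no uniform non-degeneracy on the $j=2$ side: from $k_2>k_1$ it extracts only the existence of a single orbit point at controlled distance from the sub-orbit generated by the $k_1$ transferred elements, and the contradiction is obtained by explicit bookkeeping at scale $1$.
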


\begin{proof}
	We set $\delta(n)=\frac{1}{100n^2}$. Suppose that 
	$$d_{GH}((C(Z_1),z_1,G_1),(C(Z_2),z_2,G_2))< \delta(n).$$ 
	
	Let $e_1,...,e_{k_1}\in G_1$ such that their orbit points $\{e_1z_1,...,e_{k_1}z_1\}$ forms an orthogonal basis of $G_1z_1\simeq \mathbb{R}^{k_1}$ and $d(e_jz_1,z_1)=1/n$ for all $j=1,...,k_1$. Let $L$ be the subgroup generated by $\{e_1,...,e_{k_1}\}$, then $Lz_1$ is $1$-dense in $G_1z_1$. Let $e'_1,...,e'_{k_1}\in G_2$ such that each $e'_j$ is $\delta(n)$-close to $e_j$, where $j=1,...,k_1$. Let $L'$ be the subgroup generated by $\{e'_1,...,e'_{k_1}\}$. Though elements in $L'$ may not be commutative, by Lemma \ref{orbit_commute}, the orbit $L'z_2$ can be identified as 
	$$L'z_2=\{\textstyle\prod_{j=1}^{k_1} (e'_j)^{l_j}z_2| l_j\in\mathbb{Z}\}\subseteq G_2z_2.$$
	By the inequality $k_2>k_1$ in the second condition, there exists element $g'\in G_2$ such that 
	$$d(g'z_2,z_2)=d(g'z_2,L'z_2)\in (7,8).$$
	Let $g\in G_1$ that is $\delta(n)$-close to $g'$. Because $Lz_1$ is $1$-dense in $G_1z_1$, there exists some element $h=\prod_{j=1}^{k_1} e_j^{l_j}\in L$ such that $d(hz_1,gz_1)\le 1$. By triangle inequality, we have 
	\begin{align*}
	&d(hz_1,z_1)\le d(hz_1,gz_1)+d(gz_1,z_1)\\
	\le& 1+d(gz_1,g'z_2)+d(g'z_2,z_2)+d(z_2,z_1)\le 10.
	\end{align*}
	According to \cite[Lemma 4.10]{Pan_es0}, $G_1$ acts as translations on $G_1z_1$. Recall that each $e_j\in G_1$ has displacement $1/n$ at $z_1$, thus each $l_j\le 10n$. Together with the choice of $\delta(n)$, we see that $h'=\prod_{j=1}^{k_1} (e'_j)^{l_j}\in G_2$ is $1/10$-close to $h\in G_1$. Thus 
	$$d(g'z_2,L'z_2)\le d(g'z_2,h'z_2)\le d(gz_1,hz_1)+d(gz_1,g'z_2)+d(hz_1,h'z_2)\le 2.$$
	This is a contradiction to $d(g'z_2,L'z_2)>7$ and thus
	 $$d_{GH}((C(Z_1),z_1,G_1),(C(Z_2),z_2,G_2))\ge \delta(n).$$ 
\end{proof}

Now we use Lemma \ref{eGH_gap_flat} and a critical rescaling argument to prove Proposition \ref{topol_dim_group}.

\begin{proof}[Proof of Proposition \ref{topol_dim_group}]
	We argue by contradiction.
	
	\textbf{Claim 1:} Suppose that the statement is not true, then there exist spaces $(Y_1,y_1,G_1)$ and $(Y_2,y_2,G_2)$ in $\Omega(\widetilde{M},N)$ such that its orbit $G_jy_j$ is a Euclidean factor of dimension $k_j$, where $j=1,2$, and $k_1>k_2$.
	
	In fact, if the statement of Proposition \ref{topol_dim_group} fails, then either there exists a space $(W,w,H)\in \Omega(\widetilde{M},N)$ of type $(k,d)$ with $d>0$, or there exists $(W_j,w_j,H_j)\in \Omega(\widetilde{M},N)$, where $j=1,2$, of type $(k_j,0)$ with $k_1>k_2$. For the first case above, we consider the blow-up and blow-down limits of $(W,w,H)$:
	$$(jW,w,H)\overset{GH}\longrightarrow (W,w,H_1),\quad (j^{-1}W,w,H)\overset{GH}\longrightarrow (W,w,H_2)$$
	where $j\to\infty$. Because $d>0$, it is clear that the orbit $H_1w$ is a Euclidean factor with dimension strictly larger than $k$. By Lemma \ref{cones_blowdown}, the orbit $H_2w$ is a $k$-dimensional Euclidean factor. Then $(W,w,H_1)$ and $(W,w,H_2)$ are the desired spaces in Claim 1. For the second case, the blow-up limits of $(W_j,w_j,H_j)$, where $j=1,2$, clearly satisfy the requirements. This proves Claim 1. 
    
    Let $(Y_j,y_j,G_j)\in \Omega(\widetilde{M},N)$, where $j=1,2$, as described in Claim 1. Let $r_i,s_i\to\infty$ such that
    $$(r_i^{-1}\widetilde{M},\tilde{p},N)\overset{GH}\longrightarrow (Y_1,y_1,G_1),\quad (s_i^{-1}\widetilde{M},\tilde{p},N)\overset{GH}\longrightarrow (Y_2,y_2,G_2).$$
    By passing to a suitable subsequence of $r_i$ or $s_i$, we can assume that $t_i:=r_i/s_i\to \infty$. We put
    $$(M_i,q_i,N_i)=(r_i^{-1}\widetilde{M},\tilde{p},N).$$
    Then
    $$(M_i,q_i,N_i)\overset{GH}\longrightarrow (Y_1,y_1,G_1),\quad (t_iM_i,q_i,N_i)\overset{GH}\longrightarrow (Y_2,y_2,G_2).$$
    
    Let $\delta(n)$ be the constant in Lemma \ref{eGH_gap_flat}. For each $i$, we define a set of scales $L_i$ by
    \begin{align*}
    	L_i=\{ l\in [1,t_i]\ |&\ d_{GH}((lM_i,q_i,N_i),(W,w,H))\le \delta(n)/10,   \\
    	&  \text{ where $(W,w,H)\in\Omega(\widetilde{M},N)$ has the orbit $Hw$ }\\ &  \text{ as a Euclidean factor with dimension $<k_1$}     \}.
    \end{align*}
    Recall that in $(Y_2,y_2,G_2)$, the orbit $G_2y$ is a $k_2$-dimensional Euclidean factor with $k_2<k_1$, thus $t_i\in L_i$ for all $i$ large; in particular, $L_i$ is non-empty. We choose $l_i\in L_i$ with $\inf L_i \le l_i \le \inf L_i+1$ as a sequence of critical scales.
    
    \textbf{Claim 2:} $l_i\to \infty$. Suppose that $l_i$ subconverges to a number $l_\infty <+\infty$. Then 
    $$(l_iM_i,q_i,N_i)\overset{GH}\longrightarrow (l_\infty Y_1,y_1,G_1).$$ 
    Recall that the orbit $G_1y_1$ in $(Y_1,y_1,G_1)$ is a $k_1$-dimensional Euclidean factor, thus after scaling by $l_\infty$, the orbit $G_1y_1$ in $(l_\infty Y_1,y_1,G_1)$ is also a $k_1$-dimensional Euclidean factor. On the other hand, since $l_i\in L_i$, each $(l_iM_i,q_i,N_i)$ is $\delta(n)/10$ close to some $(W_i,w_i,H_i)\in \Omega(\widetilde{M},N)$ whose orbit $H_iw_i$ is a Euclidean factor of dimension $<k_1$. It follows that
    $$d_{GH}((l_\infty Y_1,y_1,G_1),(W_i,w_i,H_i))\le \delta(n)/2$$
    for all $i$ large; a contradiction to Lemma \ref{eGH_gap_flat}. This proves Claim 2.
    
    Next, after passing to a convergent subsequence, we consider the rescaling limit
    $$(l_iM_i,q_i,N_i)\overset{GH}\longrightarrow (Y',y',G')\in \Omega(\widetilde{M},N).$$
    Let $(k',d')$ be the type of $(Y',y',G')$. It has the following two possibilities.
    
    \textit{Case 1. $k'\ge k_1$.} Recall that each $(l_iM_i,q_i,N_i)$ satisfies
    $$d_{GH}((l_iM_i,q_i,N_i),(W_i,w_i,H_i))\le\delta(n)/10$$
    for some $(W_i,w_i,H_i)\in \Omega(\widetilde{M},N)$ whose orbit $H_iw_i$ is a Euclidean factor of dimension $<k_1$. Since $(l_iM_i,q_i,N_i)$ converges to $(Y',y',G')$, we have
    $$d_{GH}((Y',y',G'),(W_i,w_i,H_i))\le \delta(n)/2$$
    for all $i$ large, where $G'y'$ is of type $(k',d')$ with $k'\ge k_1$ and $H_iw_i$ is a Euclidean factor of dimension $<k_1$. This contradicts Lemma \ref{eGH_gap_flat}. Thus Case 1 cannot happen.
    
    \textit{Case 2. $k'<k_1$.} We consider the blow-down limit of $(Y',y',G')$:
    $$(j^{-1}Y',y',G')\overset{GH}\longrightarrow (Y',y',H'),$$
    where $j\to\infty$. By Lemma \ref{cones_blowdown}, the orbit $H'y'$ is a Euclidean factor of dimension $k'$. Let $J\in\mathbb{N}$ large such that
    $$d_{GH}((J^{-1}Y',y',G'),(Y',y',H'))\le \delta(n)/100.$$
    Note that
    $$(J^{-1}l_iM_i,q_i,N_i)\overset{GH}\longrightarrow (J^{-1}Y',y',G'),$$
    thus 
    $$d_{GH}((J^{-1}l_iM_i,q_i,N_i),(Y',y',H'))\le\delta(n)/10$$
    for all $i$ large. Because $l_i\to\infty$ and $H'y'$ is is a Euclidean factor of dimension $<k_1$, we conclude that $J^{-1}l_i\in L_i$ for all $i$ large. However, this contradicts our choice of $l_i$ as $\inf L_i \le l_i\le \inf L_i +1$. Thus Case 2 cannot happen.
    
    With all possibilities of $(Y',y',G')$ being ruled out, we reach the desired contradiction and thus complete the proof of statement.
\end{proof}

As a direct consequence of Proposition \ref{topol_dim_group}, in any $(Y,y,G)\in\Omega(\widetilde{M},N)$, any compact subgroup of $G$ must fix the base point $y$. This implies the lemmas below.

\begin{lem}\label{same_orb_point}
	Let $(Y,y,G)\in \Omega(\widetilde{M},N)$ and let $h_1,h_2\in G$. If $h_1^my=h_2^my$ for some integer $m\ge 2$, then $h_1y=h_2y$.
\end{lem}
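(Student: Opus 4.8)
The plan is to push everything down to the identity component $G_0$, where the situation becomes abelian and completely explicit, and to carry the general elements $h_1,h_2$ there by exploiting that $G$ acts commutatively on the single orbit $Gy$. First I would record the structure supplied by the earlier results: by Proposition \ref{topol_dim_group} the triple $(Y,y,G)$ is of type $(k,0)$, and by Lemma \ref{cone_isom_central} the identity component $G_0$ is central in $G$, hence abelian; consequently $G_0\cong\mathbb{R}^k\times T$ with $T$ its maximal torus, and since $\mathrm{diam}(Ty)=0$ the torus $T$ fixes $y$. By Proposition \ref{cnt_orb} the orbit is connected, so $Gy=G_0y$, and by Proposition \ref{topol_dim_group} it is homeomorphic to $\mathbb{R}^k$. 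A short Lie-theoretic step then identifies the isotropy group inside $G_0$: the natural map $G_0/(G_0\cap G_y)\to G_0y$ is a continuous bijection onto a space homeomorphic to $\mathbb{R}^k$, and a connected abelian Lie group admitting such a bijection must itself be $\mathbb{R}^k$ (a torus summand is ruled out by invariance of domain), which forces $G_0\cap G_y=T$.

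The heart of the argument is a transfer statement: if $h\in G$ and $g\in G_0$ satisfy $gy=hy$, then $h^jy=g^jy$ for every $j\ge1$. I would prove this by induction on $j$, the inductive step being
\[ h^{j+1}y=h(h^jy)=h(g^jy)=(hg^j)y=(g^jh)y=g^j(hy)=g^j(gy)=g^{j+1}y, \]
where the middle equality $(hg^j)y=(g^jh)y$ is exactly Lemma \ref{orbit_commute} applied to $h,g^j\in G$, and the remaining substitutions use the induction hypothesis and $hy=gy$.

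With this in hand the proof concludes as follows. Since $Gy=G_0y$, choose $g_1,g_2\in G_0$ with $g_iy=h_iy$. The transfer statement gives $g_1^my=h_1^my=h_2^my=g_2^my$, so, using that $G_0$ is abelian, $(g_1^{-1}g_2)^m=g_1^{-m}g_2^m$ lies in $G_0\cap G_y=T$. Writing $g_1^{-1}g_2=(v,\theta)$ under the isomorphism $G_0\cong\mathbb{R}^k\times T$, this says $(mv,m\theta)\in\{0\}\times T$, hence $mv=0$, hence $v=0$; thus $g_1^{-1}g_2\in T\subseteq G_y$, so $g_1y=g_2y$, that is, $h_1y=h_2y$.

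The only place requiring genuine care is the identification $G_0\cap G_y=T$: one must use that the orbit is honestly homeomorphic to $\mathbb{R}^k$, not merely $k$-dimensional, in order to exclude a torus factor in $G_0/(G_0\cap G_y)$. Everything else is either a direct appeal to the cited results or the elementary induction above, and the non-commutativity of $G$ never intervenes because Lemma \ref{orbit_commute} makes the $G$-action effectively abelian on the orbit $Gy$.
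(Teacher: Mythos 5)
Your proof is correct, but it takes a different route from the paper's. The paper argues softly: using Lemma \ref{orbit_commute} it reduces to the single statement that $h^my=y$ forces $hy=y$, and proves this by noting that the closure of $\langle h\rangle$ has a finite, hence bounded, orbit, so it is a compact subgroup of $G$, and compact subgroups fix $y$ because $(Y,y,G)$ is of type $(k,0)$ by Proposition \ref{topol_dim_group}. You instead pass to the identity component: you pick $g_i\in G_0$ with $g_iy=h_iy$, prove a transfer statement $h^jy=g^jy$ by induction with Lemma \ref{orbit_commute}, split $G_0\cong\mathbb{R}^k\times T$ with $T$ fixing $y$, identify $G_0\cap G_y=T$, and finish by the explicit torsion-freeness computation $mv=0\Rightarrow v=0$. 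Both proofs rest on the same two pillars (Lemma \ref{orbit_commute} and the type-$(k,0)$ conclusion), but the mechanics differ: the paper never needs to move $h_1,h_2$ into $G_0$ nor to identify the stabilizer, and its compactness trick is shorter; your version is more explicit and makes the underlying algebraic reason ($\mathbb{R}^k$ has no torsion modulo the part fixing $y$) visible. Two small remarks: the identification $G_0\cap G_y=T$, which you flag as the delicate point, can be obtained in one line without invariance of domain, since $G_y$ is a compact subgroup of $\mathrm{Isom}(Y)$ and any compact subgroup of $\mathbb{R}^k\times T$ lies in $\{e\}\times T$ --- this is exactly the observation the paper uses wholesale; and note that the formal statement of Proposition \ref{topol_dim_group} only gives the type $(k,0)$, the homeomorphism of $Gy$ with $\mathbb{R}^k$ being asserted in the surrounding text, so quoting it for the orbit topology is legitimate but can be bypassed by the compactness argument just mentioned.
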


\begin{proof}
	We first prove that if $h^m y=y$ for some integer $m\ge 2$, then $hy=y$. In fact, let $H\subseteq G$ be the closure of the subgroup generated by $h$. Because $h^m y=y$, the orbit $Hy$ consists of at most $m-1$ many points; in particular, the orbit $Hy$ is closed and bounded. Thus $H$ is a compact subgroup of $G$. Because $(Y,y,G)$ is of type $(k,0)$ by Proposition \ref{topol_dim_group},  $H$ must fix $y$. Thus $hy=y$.
	
	Now, let $h_1,h_2\in G$ such that $h_1^my=h_2^my\not= y$ for some integer $m\ge 2$. By Lemma \ref{orbit_commute},
	$$y=h_1^{-m}h_2^m y=(h_1^{-1}h_2)^m y.$$
	It follows from the previous paragraph that $h_1y=h_2y$.
\end{proof}

\begin{lem}\label{power_outsideR}
	Let $(Y,y,G)\in \Omega(\widetilde{M},N)$. Let $H$ be a closed $\mathbb{R}$-subgroup of $N$ and let $\beta\in G$ such that $\beta y$ is outside of $Hy$. Then $d(\beta^m y, Hy)\to\infty$ as $m\to\infty$.
\end{lem}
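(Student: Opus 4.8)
The plan is to transport the question into the Euclidean factor of the asymptotic cone and reduce it to an elementary estimate about one-parameter groups of Euclidean isometries.

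\emph{Standing structure.} Since $\widetilde M$ is conic at infinity, $Y=\mathbb R^{k_0}\times C(Z)$ is a metric cone with vertex $y$, where $C(Z)$ contains no lines and $z$ is its unique vertex, and $\mathrm{Isom}(Y)=\mathrm{Isom}(\mathbb R^{k_0})\times\mathrm{Isom}(C(Z))$. As every isometry of $C(Z)$ fixes $z$, the orbit $Gy$ lies in the Euclidean slice $\mathbb R^{k_0}\times\{z\}$, which I identify isometrically with $\mathbb R^{k_0}$ ($y\leftrightarrow 0$); for $g\in G$ with Euclidean part $\bar g\in\mathrm{Isom}(\mathbb R^{k_0})$ the orbit point $g^m y$ corresponds to $\bar g^{\,m}\!\cdot 0$. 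By Proposition \ref{topol_dim_group}, $(Y,y,G)$ is of type $(k,0)$, so every compact subgroup of $G$ fixes $y$; moreover the isometric $G$-action on the proper space $Y$ is proper, so any orbit that is bounded has compact (hence $y$-fixing) closure. Finally, $H$ is connected, so $H\subseteq G_0$, and $G_0$ is central in $G$ by Lemma \ref{cone_isom_central}; hence $\beta$ commutes with $H$.

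\emph{Linear normal forms (the main step).} Writing the one-parameter group $t\mapsto\bar h_t$ in $\mathrm{Isom}(\mathbb R^{k_0})$ with generator $(B,b)$, $B$ skew-symmetric, and splitting $b=b_0+b_1$ along $\ker B\oplus\operatorname{im}B$, integrating the flow gives $h_t y=tb_0+(\exp(tB)-I)q$ with $q=B^{-1}b_1\in\operatorname{im}B$; hence $Hy$ lies in the $2\|q\|$-neighborhood of the line $L:=\mathbb R b_0$, and $b_0\ne 0$, since otherwise $Hy$ is bounded, forcing the closed subgroup $H\cong\mathbb R$ to have compact closure. Likewise, writing the Euclidean part of $\beta$ as $(A,v)$ and splitting $v=v_0+v_1$ along $\ker(A-I)\oplus\operatorname{im}(A-I)$, one gets $\beta^m y=mv_0+w_m$ with $w_m=(A^m-I)(A-I)^{-1}v_1$, so $\|w_m\|\le 2\|(A-I)^{-1}v_1\|$ uniformly in $m\in\mathbb Z$; and $v_0\ne 0$, since otherwise $\{\beta^m y\}$ is bounded, $\overline{\langle\beta\rangle}$ is compact and fixes $y$, whence $\beta y=y\in Hy$, contrary to hypothesis.

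\emph{Conclusion.} Put $c=2\|q\|+2\|(A-I)^{-1}v_1\|$. Then $d(\beta^m y,Hy)\ge d(mv_0,L)-c=|m|\,d(v_0,L)-c$, and it suffices to show $v_0\notin L$. If instead $v_0=sb_0$ for some $s\in\mathbb R$, then $h_s^{-1}$ commutes with $\beta$, so $(h_s^{-1}\beta)^m=h_{ms}^{-1}\beta^m$ and, because the linear parts $mv_0$ and $msb_0$ of $\beta^m y$ and $h_{ms}y$ cancel, $d\big((h_s^{-1}\beta)^m y,\,y\big)=d(\beta^m y,h_{ms}y)\le c$ for all $m$; hence $\langle h_s^{-1}\beta\rangle$ has bounded orbit of $y$, its closure is compact and fixes $y$, and $\beta y=h_s y\in Hy$, a contradiction. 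Therefore $v_0\notin L$, $d(v_0,L)>0$, and $d(\beta^m y,Hy)\to\infty$. (When $Hy=\{y\}$, the normal form gives $d(\beta^m y,y)=\|mv_0+w_m\|\ge|m|\,\|v_0\|-\|w_m\|\to\infty$ directly.) The hard part is the normal-form step: one must use the metric cone structure together with Proposition \ref{topol_dim_group} to guarantee that $Hy$ stays a bounded distance from a genuine line through $y$ and that $\beta^m y$ is, up to bounded error, the arithmetic progression $m\mapsto mv_0$ with $v_0\ne 0$; once that normal form is available, the rest is just a distance estimate and the bounded-orbit-implies-fixed-point consequence of type $(k,0)$.
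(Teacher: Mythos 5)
Your proof is correct, and it takes a genuinely different route from the paper's. The paper argues by contradiction and passes to the quotient $(Y/H,\bar{y},G/H)$ (legitimate since $H\subseteq G_0$ is central by Lemma \ref{cone_isom_central}): if $d(\beta^m y,Hy)$ stayed bounded, the closure of $\langle\bar{\beta}\rangle$ would be a compact subgroup moving $\bar{y}$ a positive bounded amount, contradicting the structure $G_0\cong\mathbb{R}^k\times T$ with $T$ fixing $y$ (type $(k,0)$, Proposition \ref{topol_dim_group}), which forces compact subgroups of $G_0/H$ to fix $\bar{y}$. You rely on exactly the same two pillars --- type $(k,0)$ so that closed subgroups with bounded orbit are compact and fix $y$, and centrality of $G_0$ so that $\beta$ commutes with $H$ --- but you avoid the quotient entirely by writing explicit normal forms in the Euclidean factor: $h_t y=tb_0+(\exp(tB)-I)q$ and $\beta^m y=mv_0+w_m$ with uniformly bounded remainders, reducing the lemma to the single assertion $v_0\notin\mathbb{R}b_0$, which you prove by running the bounded-orbit-fixes-$y$ argument on $h_s^{-1}\beta$ instead of on the quotient element $\bar{\beta}$; your checks that $b_0\not=0$ and $v_0\not=0$ (via properness of $Y$ and the non-compactness of a closed $\mathbb{R}$-subgroup) are sound. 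What your version buys is a quantitative conclusion, $d(\beta^m y,Hy)\ge |m|\,d(v_0,\mathbb{R}b_0)-c$, i.e.\ linear divergence rather than mere divergence, and it sidesteps any discussion of the quotient metric space $Y/H$ and the induced $G/H$-action, which the paper leaves implicit; the price is the coordinate computation, whereas the paper's quotient argument is shorter and coordinate-free. One cosmetic remark: the statement's ``closed $\mathbb{R}$-subgroup of $N$'' is a typo for ``of $G$'', and you correctly interpreted it that way.
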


\begin{proof}
   We argue by contradiction. Suppose that there is a number $C>0$ such that
   $d(\beta^m y, Hy)\le C$
   for all $m\in\mathbb{Z}$. By the connectedness of $Gy$, we can assume that $\beta\in G_0$ without loss of generality. Because $H$ is central in $G$ by Lemma \ref{cone_isom_central}, we can consider the quotient of $(Y,y,G)$ by $H$-action, denoted by $(Y/H,\bar{y},G/H)$. Let $\bar{\beta}\in G/H$ be the quotient of $\beta$. By hypothesis, we have
   $$d(\bar{\beta}\bar{y},\bar{y})>0,\quad d(\bar{\beta}^m \bar{y},\bar{y})\le C$$
   for all $m\in\mathbb{Z}$. Let $K\subseteq G/H$ be the closure of the subgroup generated $\bar{\beta}$. Then $K$ is a compact subgroup in the identity component of $G/H$ with $$0<\mathrm{diam}(K\bar{y})\le C.$$
   On the other hand, because $G_0$ is abelian and $(Y,y,G)$ is of type $(k,0)$, we can write $G_0=\mathbb{R}^k \times T$, where $T$ is a torus group fixing $y$. After the quotient by the $\mathbb{R}$-subgroup $H$, any compact subgroup in $G_0/H$ must fix $\bar{y}$. A contradiction.
\end{proof}

\section{Asymptotic orbits of $\mathbb{Z}$-actions}\label{sec_Z}

Throughout this section, we always assume that an open manifold $M$ satisfies the assumptions in Theorem A(1) and has an infinite nilpotent fundamental group $N$. We fix an element $\gamma\in N$ with infinite order. We will study the equivariant asymptotic cones of $(\widetilde{M},\langle \gamma \rangle)$. Our first goal of this section is to prove the result below.

\begin{prop}\label{R_orb}
	Any space $(Y,y,H)\in \Omega(\widetilde{M},\langle \gamma \rangle)$ must be of type $(1,0)$; consequently, the orbit $Hy$ is connected and homeomorphic to $\mathbb{R}$.
\end{prop}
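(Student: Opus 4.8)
The goal is to upgrade the general structural result for nilpotent actions (Proposition \ref{topol_dim_group}) to the specific cyclic case: every equivariant asymptotic cone of $(\widetilde{M},\langle\gamma\rangle)$ is of type $(1,0)$. The plan is to first reduce to understanding asymptotic cones of $(\widetilde M,\langle\gamma\rangle)$ in terms of asymptotic cones of $(\widetilde M, N)$. Given $(Y,y,H)\in\Omega(\widetilde M,\langle\gamma\rangle)$, pick a sequence $r_i\to\infty$ realizing it; after passing to a subsequence we also get $(r_i^{-1}\widetilde M,\tilde p,N)\overset{GH}\to (Y,y,G)$ for some closed nilpotent $G\supseteq H$, and by Proposition \ref{topol_dim_group} this $(Y,y,G)$ is of type $(k,0)$, so $G_0y$ is a $k$-dimensional Euclidean factor $\mathbb R^k\times\{z\}$ in $Y=\mathbb R^k\times C(Z)$, with $G_0$ central in $G$ (Lemma \ref{cone_isom_central}) and all of $Gy=G_0y$ connected (Proposition \ref{cnt_orb}, Lemma \ref{orbit_commute}). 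In particular $Hy\subseteq Gy$ lies in the Euclidean factor, and $\gamma\mapsto h\in G$ with $h=(A,v,\alpha)$; since $hy$ lies in the Euclidean factor, $h$ acts on $\mathbb R^k$ as $(A,v)$ and the orbit $Hy=\{h^jy\}$, whose closure is a connected abelian Lie subgroup orbit (using Lemma \ref{orbit_commute} applied to $G$, hence $H$ acts ``commutatively'' on its orbit) — so $\overline{H}y$ is a torus-times-$\mathbb R^{k'}$ orbit. The type of $(Y,y,H)$ is $(k',d')$ where $k'\le k$.

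The two things to rule out are $k'=0$ (compact orbit) and $d'>0$ (a nontrivial torus part). Ruling out $k'=0$: this is exactly the non-compactness of the orbit, which follows as in the remark after Lemma \ref{cones_blowdown} from the fact that $\langle\gamma\rangle\cong\mathbb Z$ is infinite and finitely generated — by the argument of \cite[Lemma 3.1]{Pan_es0} the orbit of an infinite finitely generated group in any equivariant asymptotic cone is non-compact, so $k'\ge 1$. Ruling out $d'>0$ and $k'\ge 2$: here I would run the same critical-rescaling scheme used for Proposition \ref{topol_dim_group}, but now applied to $\Omega(\widetilde M,\langle\gamma\rangle)$, which is again compact and connected by Proposition \ref{cpt_cnt} (stated for general $\Gamma$). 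The key point making $k'\le 1$ is that $\langle\gamma\rangle$ is \emph{cyclic}: in any blow-down limit $(C(Z'),z',H')$ of a type-$(k',d')$ cone action by (the closure of) a cyclic group, the limit orbit $H'z'$ is a Euclidean factor of dimension $k'$ by Lemma \ref{cones_blowdown}; but a Euclidean factor $\mathbb R^{k'}$ that arises as a limit of orbits of a \emph{single} isometry must be $1$-dimensional, because a sequence of cyclic (hence, on the Euclidean orbit, ``$1$-generated'') orbit sets can only Gromov--Hausdorff converge to a set that is a limit of arithmetic progressions, i.e.\ a line or a discrete set — never a higher-dimensional subspace. Concretely, one fixes a basepoint and notes $H'z' = \lim \{ j\,\bar v_i : j\in\mathbb Z\}$ for suitable displacement vectors $\bar v_i$, whose closure in the limit can have topological dimension at most $1$. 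This gives the analogue of Lemma \ref{eGH_gap_flat}: there is $\delta(n)>0$ separating (cyclic-type) cone actions with $1$-dimensional Euclidean orbit from those with orbit of type $(k',d')$, $k'\ge 2$ or $d'>0$. Feeding this gap into the critical-scale argument — exactly mirroring the proof of Proposition \ref{topol_dim_group}, with $L_i$ now the set of scales at which $(lM_i,q_i,\langle\gamma\rangle_i)$ is close to a cone action with $1$-dimensional Euclidean orbit — forces every $(Y,y,H)$ to be of type $(1,0)$.

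Once $(Y,y,H)$ is of type $(1,0)$: the orbit $Hy$ is connected, and $\overline H$ acts on the $1$-dimensional Euclidean factor $\mathbb R\times\{z\}$ through $\overline H_0\cong\mathbb R$ (no torus part since $d'=0$, no compact isotropy moving $y$ since type $(1,0)$ means any compact subgroup fixes $y$ by the argument in Lemma \ref{same_orb_point}). Hence $Hy=\overline H_0 y = \mathbb R\times\{z\}$ as a set, and the orbit map $\overline H_0\to Hy$ is a continuous bijection from $\mathbb R$ onto a line, hence a homeomorphism; so $Hy$ is homeomorphic to $\mathbb R$.

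\textbf{Main obstacle.} The delicate point is establishing the Gromov--Hausdorff gap (the cyclic analogue of Lemma \ref{eGH_gap_flat}) that drives the critical-rescaling argument: one must show quantitatively that the orbit closure of a \emph{single} isometry on a Euclidean factor of a metric cone, after blow-down, cannot be a Euclidean subspace of dimension $\ge 2$ (nor can it carry a positive-diameter torus), and that such configurations are uniformly separated in equivariant Gromov--Hausdorff distance from the $1$-dimensional model. All the rest — the reduction via $\Omega(\widetilde M,N)$, non-compactness of the orbit, and the topological identification of a type-$(1,0)$ orbit with $\mathbb R$ — is routine given the tools already assembled in Sections \ref{sec_pre} and \ref{sec_nil}.
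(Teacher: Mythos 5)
Your reduction to the ambient action (obtaining $(Y,y,G)\in\Omega(\widetilde M,N)$ of type $(k,0)$ with $H\subseteq G$) and your treatment of non-compactness of the orbit are consistent with the paper, but the core of your argument has a genuine gap. The step you yourself flag as the ``main obstacle'' --- a cyclic analogue of Lemma \ref{eGH_gap_flat} asserting that blow-downs of asymptotic orbits of a \emph{single} isometry are uniformly separated from configurations of type $(k',d')$ with $k'\ge 2$ or $d'>0$ --- is not proved, and the heuristic you offer for it is unsound. The limit orbit $H'z'$ is \emph{not} ``a limit of arithmetic progressions $\{j\,\bar v_i\}$'': the displacement function $m\mapsto d(\gamma^m\tilde p,\tilde p)$ on the approximating manifolds has no additivity or monotonicity whatsoever, and the orbit of $\gamma$ at a given scale can wander far away and return. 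Handling exactly this phenomenon is the substance of the paper's proof: it introduces the first scale via $l_i=\min\{l:|\gamma^l|\ge r_i\}$, converges the symmetric sets $S_\gamma(l_i)$ to show $Hy\supseteq \mathbb R(g)y$ (via Lemma \ref{same_orb_point}), and then, assuming some $\beta y\notin\mathbb R(g)y$, uses Lemma \ref{power_outsideR} plus a \emph{second} rescaling at the auxiliary scale $d_i=\max_{l_i\le k\le m_i}|\gamma^k|$ to produce a bounded, multiplication-closed limit set $B'y'$ with $d_H(B'y',y')=1$, contradicting Lemma \ref{sym_closed_multi}. In particular, the paper never reruns the critical-rescaling/gap argument on $\Omega(\widetilde M,\langle\gamma\rangle)$; it only needs Proposition \ref{topol_dim_group} for the full nilpotent group, precisely because a workable quantitative gap statement for cyclic actions is not available. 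Your plan, by contrast, defers all of the actual difficulty into the unproved gap lemma, so as written it does not constitute a proof.

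A secondary inaccuracy: once type $(1,0)$ is known, the orbit $Hy$ need not be a straight Euclidean line $\mathbb R\times\{z\}$; it is a closed one-parameter orbit inside the Euclidean factor $\mathbb R^k\times\{z\}$ of $Y$ (and may, for instance, have Hausdorff dimension larger than $1$, as in \cite{PW_ex}). The correct conclusion --- $Hy$ connected and homeomorphic to $\mathbb R$ --- follows instead from connectedness, the fact that compact subgroups fix $y$ (Proposition \ref{topol_dim_group} applied to $G$, as in Lemma \ref{same_orb_point}), and closedness of $H$ in $\mathrm{Isom}(Y)$, not from identifying $Hy$ with a line factor.
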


We remark that the group $H$ could be strictly larger than $\mathbb{R}$, because $H$ may have a non-trivial isotropy subgroup at $y$. Also, recall that $H$ is a closed subgroup of $\mathrm{Isom}(Y)$, thus the orbit $Hy$ is embedded in $Y$, that is, the subspace topology of $Hy$ matches with the quotient topology from $H/K$, where $K$ is the isotropy subgroup of $H$ at $y$.

Here is the rough idea to prove Proposition \ref{R_orb}: suppose that $(Y,y,H)$ is not of type $(1,0)$, then we shall find a space $(Y',y',G')\in\Omega(\widetilde{M},N)$ violating Proposition \ref{topol_dim_group}. 

We need some preparations first.

\begin{defn}
	Let $G$ be a group. We say a subset $S$ of $G$ is \textit{symmetric}, if $S$ satisfies the following conditions:\\
	(1) $\mathrm{id}\in S$,\\
	(2) if $g\in S$, then $g^{-1}\in S$.
\end{defn}

%

\begin{defn}
	Let $(X_i,x_i,G_i)$ be a pointed equivariant Gromov-Hausdorff convergent sequence with limit $(Y,y,H)$. Recall that this means there is a sequence of triples of $\epsilon_i$-approximation maps $(f_i,\varphi_i,\psi_i)$ (see \cite[Definition 3.3]{FY}). For each $i$, let $S_i$ be a closed symmetric subset of $G_i$. We write $\overline{\varphi_i(S_i)}$ as the closure of $\varphi_i(S_i)$ in $H$. We say that the sequence $S_i$ Gromov-Hausdorff converges to a limit closed symmetric subset $S\subseteq H$, denoted by
	$$(X_i,x_i,S_i)\overset{GH}\longrightarrow (Y,y,S),$$
	if $S$ is the limit of $\overline{\varphi_i(S_i)}$ with respect to the topology on the set of all closed subsets of $H$ induced by the compact-open topology. Equivalently, the closed symmetric subset $S\subseteq H$ satisfies\\
	(1) for any $h\in S$, there is a sequence of isometries $g_i\in S_i$ converging to $h$,\\
	(2) any convergent sequence of isometries $g_i\in S_i$ has the limit $h$ in $S$.
\end{defn}

It follows directly from the proof of \cite[Proposition 3.6]{FY} that we have the precompactness result below.
\begin{prop}
	Let $(X_i,x_i,G_i)$ be a pointed equivariant Gromov-Hausdorff convergent sequence with limit $(Y,y,H)$. For each $i$, let $S_i$ be a closed symmetric subset of $G_i$. Then passing to a subsequence, we have the convergence
	$$(X_i,x_i,S_i)\overset{GH}\longrightarrow (Y,y,S)$$
	for some limit closed symmetric subset $S$ of $H$.
\end{prop}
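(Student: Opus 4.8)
The plan is to follow the extraction argument in the proof of \cite[Proposition 3.6]{FY}, phrased in terms of the distance functions of the sets $\overline{\varphi_i(S_i)}$, and to conclude by Arzelà--Ascoli. First I would record the relevant structure of $H$: since $Y$ is a proper geodesic space, $\mathrm{Isom}(Y)$ is a locally compact, second countable topological group, and hence so is its closed subgroup $H$; in particular $H$ is $\sigma$-compact and admits a metric $d_H$ compatible with its topology together with an exhaustion by compact sets. The topology on the set of closed subsets of $H$ ``induced by the compact-open topology'' is, after identifying a closed set $T$ with the continuous function $d_H(\cdot,T)$, simply uniform convergence on compact subsets; conceptually, this is the Chabauty--Fell topology, which is compact on a locally compact second countable space, so precompactness is to be expected and the only real work is to unwind the definition. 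Set $T_i:=\overline{\varphi_i(S_i)}\subseteq H$. Each $S_i$ is symmetric, so $\mathrm{id}\in S_i$, and since the equivariant $\epsilon_i$-approximation triples in \cite[Definition 3.3]{FY} may be taken so that $\varphi_i(\mathrm{id}_{G_i})$ is $\epsilon_i$-close to $\mathrm{id}_H$ and $\varphi_i$ respects the group operations up to error on bounded sets, each $T_i$ is closed, symmetric, and contains a point $\epsilon_i$-close to $\mathrm{id}_H$.

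Next I would carry out the extraction. Consider $\rho_i:=d_H(\cdot,T_i):H\to[0,\infty)$. Each $\rho_i$ is $1$-Lipschitz, and using the near-identity point of $T_i$ one gets $\rho_i(h)\le d_H(h,\mathrm{id}_H)+\epsilon_i$, so on every compact subset of $H$ the family $\{\rho_i\}$ is uniformly bounded and equicontinuous. By Arzelà--Ascoli applied along the compact exhaustion and a diagonal argument, a subsequence satisfies $\rho_i\to\rho$ uniformly on compact subsets of $H$ for some $1$-Lipschitz $\rho:H\to[0,\infty)$ with $\rho(\mathrm{id}_H)=0$. Put $S:=\rho^{-1}(0)$, a nonempty closed subset of $H$. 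A standard argument using the local compactness of $H$ shows $\rho=d_H(\cdot,S)$: the inequality $\rho\le d_H(\cdot,S)$ follows from $1$-Lipschitzness, and for the reverse one picks $g_i\in T_i$ nearly realizing $\rho_i(x)$, extracts a convergent subsequence $g_i\to g$ (the $g_i$ lie in a bounded set), and notes $\rho(g)=\lim\rho_i(g_i)=0$, so $g\in S$ and $d_H(x,S)\le d_H(x,g)=\lim d_H(x,g_i)\le\rho(x)$. The same reasoning verifies the two conditions in the definition of $(X_i,x_i,S_i)\overset{GH}\longrightarrow(Y,y,S)$: if $h\in S$ then $\rho_i(h)\to 0$ produces $g_i\in S_i$ with $\varphi_i(g_i)\to h$; and if $g_i\in S_i$ with $\varphi_i(g_i)\to h$, then $\rho_i(\varphi_i(g_i))=0$ forces $\rho(h)=0$, i.e. $h\in S$.

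Finally I would check that $S$ is symmetric. That $\mathrm{id}_H\in S$ is immediate from $\rho(\mathrm{id}_H)=0$. If $h\in S$, take $g_i\in S_i$ with $\varphi_i(g_i)\to h$; these $g_i$ stay in a fixed bounded region of $G_i$, so $g_i^{-1}\in S_i$ by symmetry and $\varphi_i(g_i^{-1})$ is within $o(1)$ of $\varphi_i(g_i)^{-1}$ by the (bounded-set, up-to-error) compatibility of $\varphi_i$ with the group operation; hence $\varphi_i(g_i^{-1})\to h^{-1}$ and $h^{-1}\in S$ by condition (2). The step requiring the most care — and the only subtle point — is precisely this use of the equivariant approximation maps: in \cite{FY} the maps $\varphi_i$ control the group structure only up to error and only on bounded subsets, so all limits must be taken with the identity as fixed base point and with the elements involved confined to a fixed bounded region of $G_i$; this is exactly what the compact-open/Chabauty formulation of the convergence provides, so it causes no difficulty.
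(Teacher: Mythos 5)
Your proof is correct and takes essentially the same route as the paper, which disposes of this proposition in one line by citing the proof of \cite[Proposition 3.6]{FY}; your Arzel\`a--Ascoli/Chabauty-style extraction applied to the distance functions of $\overline{\varphi_i(S_i)}$, followed by the verification of conditions (1), (2) and of symmetry via the bounded-range compatibility of the approximation maps, is precisely that argument written out in detail.
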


\begin{defn}\label{def_para_orbit}
	Let $(Y,y,G)\in \Omega(\widetilde{M},N)$ and let $gy\in Gy-\{y\}$. Because the orbit $Gy$ is connected, we can assume $g\in G_0$. 
	Let $\exp$ be the exponential map from the Lie algebra of $G_0$ to the Lie group $G_0$; note that $\exp$ is surjective because $G_0$ is abelian. Then $g=\exp (v)$ for some $v$ in the Lie algebra. We define the following subsets of $Gy$:
	$$P(g)y=\{\exp(tv)y|t\in[-1,1]\},$$
	$$\mathbb{R}(g)y=\{\exp(tv)y| t\in\mathbb{R}\}.$$
\end{defn}  

\begin{lem}\label{para_orbit}
	In Definition \ref{def_para_orbit}, the set $P(g)y$, and thus $\mathbb{R}(g)y$, are uniquely determined by the orbit point $gy$.
\end{lem}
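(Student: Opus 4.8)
The plan is to reduce the statement to a single assertion about one-parameter subgroups and then settle that assertion using Lemma \ref{same_orb_point}. First I would isolate the two sources of ambiguity in the construction of $P(g)y$: the choice of the element $g\in G_0$ representing the prescribed orbit point (which may be taken in $G_0$ since $Gy=G_0y$ is connected) and the choice of a logarithm $v$ of $g$. Both are handled simultaneously by considering two elements $g,g'\in G_0$ with $gy=g'y$ together with arbitrary $v,v'$ in the Lie algebra of $G_0$ satisfying $\exp(v)=g$ and $\exp(v')=g'$. Since $G_0$ is abelian, $\exp(a+b)=\exp(a)\exp(b)$, so $\exp(v-v')=g(g')^{-1}$, and using commutativity together with $gy=g'y$ one computes $\exp(v-v')y=g(g')^{-1}y=(g')^{-1}gy=(g')^{-1}(g'y)=y$; that is, $\exp(v-v')$ fixes $y$.

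The key claim I would then prove is: if $u$ lies in the Lie algebra of $G_0$ and $\exp(u)$ fixes $y$, then $\exp(tu)$ fixes $y$ for every $t\in\mathbb{R}$. Granting this, $\exp\big(t(v-v')\big)$ fixes $y$ for all $t$, and hence
$$\exp(tv)y=\exp(tv')\exp\big(t(v-v')\big)y=\exp(tv')y$$
for all $t$, so that the parametrized curves $t\mapsto\exp(tv)y$ and $t\mapsto\exp(tv')y$ coincide pointwise, not merely as sets. In particular $P(g)y=P(g')y$, and $\mathbb{R}(g)y$ is likewise independent of all choices. To prove the claim, I would fix an integer $q\ge 2$ and set $h=\exp(u/q)$, so that $h^{q}=\exp(u)$ and therefore $h^{q}y=y=e^{q}y$. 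Applying Lemma \ref{same_orb_point} with $h_1=h$, $h_2=e$ and $m=q$ gives $hy=y$. Writing an arbitrary rational $t$ as $p/q$ with $q\ge 2$, we then get $\exp(tu)y=\exp(u/q)^{p}y=y$, and since $t\mapsto\exp(tu)y$ is continuous and $\mathbb{Q}$ is dense in $\mathbb{R}$, the claim follows.

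Everything outside the claim is routine bookkeeping with the abelian group $G_0$ and the surjectivity of $\exp$. The hard part, and really the only substantive point, is the claim itself: upgrading ``the time-one element of a one-parameter subgroup fixes $y$'' to ``the entire one-parameter subgroup fixes $y$''. This is precisely where the conic-at-infinity hypothesis enters, through Proposition \ref{topol_dim_group} (by which compact subgroups of $G$ fix $y$) that underlies Lemma \ref{same_orb_point}; no additional geometric input is needed.
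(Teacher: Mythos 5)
Your proof is correct and follows essentially the same route as the paper's: both arguments rest on Lemma \ref{same_orb_point} applied to fractional powers, the abelianness of $G_0$, and a density/continuity step from rational to real times. Your repackaging of the two ambiguities into the single claim that a one-parameter subgroup fixing $y$ at time one fixes $y$ for all times is a harmless streamlining of the paper's two-step comparison, not a different method.
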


\begin{proof}
	We first show that the set $P(g)y$ is independent of the choice of $v$ in Definition \ref{def_para_orbit}. Suppose that 
	$$g=\exp(v)=\exp(w),$$
	where $v,w$ are elements in the Lie algebra of $G_0$. By Lemma \ref{same_orb_point}, we have 
	$$\exp(\frac{1}{b}v)y=\exp(\frac{1}{b}w)y$$
	for any integer $b\in \mathbb{Z}_+$. Then for any integer $a\in \mathbb{Z}_+$, it follows from Lemma \ref{orbit_commute} that
	$$\exp(\frac{a}{b}v)y=\exp(\frac{a-1}{b}v)\exp(\frac{1}{b}v)y=\exp(\frac{1}{b}w)\exp(\frac{a-1}{b}v)y=...=\exp(\frac{a}{b}w)y.$$
	In other words, we have shown that
	$$\exp(tv)y=\exp(tw)y$$
	holds for all $t\in\mathbb{Q}$. Because $P(g)y$ is the closure of the set $$\{\exp(tv)y|t\in[-1,1]\cap \mathbb{Q} \},$$
	we conclude that $P(g)y$ is independent of the choice of $v$.
	
	Next, we show that $P(g)y$ only depends on the orbit point $gy$, but not the choice of $g\in G_0$. Suppose that $h\in G_0$ such that $gy=hy$. Let $v,w$ be vectors in the Lie algebra of $G_0$ such that
	$$\exp(v)=g,\quad \exp(w)=h.$$   
	Following a similar argument in the first paragraph of the proof and applying Lemmas \ref{orbit_commute} and \ref{same_orb_point}, one can clearly verify the result.
\end{proof}

\begin{lem}\label{sym_closed_multi}
	Let $(Y,y,G)\in \Omega(\widetilde{M},N)$ and let $S$ be a closed symmetric subset of $G$. Suppose that the set $Sy$ satisfies the following properties:\\
	(1) $Sy$ is closed under multiplication, that is, if $g_1, g_2\in S$, then $g_1g_2y\in Sy$;\\
	(2) $Sy$ is bounded.\\
	Then $Sy=\{y\}$. 
\end{lem}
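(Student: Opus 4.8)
The plan is to exploit the connectedness of the orbit $Gy$ (Proposition \ref{cnt_orb}) together with the fact that $(Y,y,G)$ is of type $(k,0)$ (Proposition \ref{topol_dim_group}), which forces $G_0$ to split as $\mathbb{R}^k\times T$ with $T$ a torus fixing $y$. First I would reduce to the case $S\subseteq G_0$: since $Gy=G_0y$, for each $g\in S$ there is $h\in G_0$ with $gy=hy$, and replacing $g$ by $h$ changes neither the hypotheses nor the conclusion (using Lemma \ref{orbit_commute} so that $Sy$ remains closed under multiplication at the level of orbit points). Then, after quotienting out the torus $T$, we may further assume $G_0=\mathbb{R}^k$ acting by translations on $Gy\simeq\mathbb{R}^k$, so $S$ maps to a closed symmetric subset of $\mathbb{R}^k$ whose image is bounded and closed under vector addition.

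The heart of the matter is then the elementary claim: a bounded subset $A\subseteq\mathbb{R}^k$ containing $0$, symmetric under negation, and closed under addition, must equal $\{0\}$. Indeed if $v\in A$ with $v\neq 0$, then all integer multiples $nv$ lie in $A$ by repeated addition, contradicting boundedness. I would phrase this carefully in the equivariant-limit setting: translating back, if $gy\in Sy$ with $gy\neq y$, then writing $g=\exp(tv)$ and using closure under multiplication of $Sy$ repeatedly (together with Lemma \ref{orbit_commute} to rearrange products and Lemma \ref{same_orb_point} to handle the torus ambiguity), all points $g^n y$ lie in $Sy$, and $d(g^ny,y)\to\infty$ since $g$ acts as a nontrivial translation on the Euclidean factor — contradicting boundedness of $Sy$. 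Hence $Sy=\{y\}$.

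The main obstacle I expect is the bookkeeping around the isotropy subgroup: $S$ itself need not lie in $G_0$, elements of $S$ need not commute on the nose, and "closed under multiplication" is only assumed for the orbit points $g_1g_2y$, not for the group elements. So the argument must be run entirely at the level of orbit points, invoking Lemma \ref{orbit_commute} to make the relevant products well-behaved and Lemma \ref{same_orb_point} to pass between a group element and its $G_0$-representative without changing the orbit point. Once the reduction to translations on $\mathbb{R}^k$ is set up cleanly, the contradiction from $d(g^ny,y)\to\infty$ is immediate, so the entire difficulty is in that reduction rather than in any estimate.
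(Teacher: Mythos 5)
Your bookkeeping at the level of orbit points (replacing $g\in S$ by a $G_0$-representative via $Gy=G_0y$, and using Lemma \ref{orbit_commute} and Lemma \ref{same_orb_point} to keep $Sy$ closed under multiplication and to get $g^ny\in Sy$ for all $n$) is fine, but the step that actually produces the contradiction is not justified: the assertion that an element with $gy\neq y$ ``acts as a nontrivial translation on the Euclidean factor'', so that $d(g^ny,y)\to\infty$. An element of $G_0$ has the form $(A,v,\alpha)$ with $(A,v)\in\mathrm{Isom}(\mathbb{R}^{k'})$ and $\alpha$ fixing the vertex of $C(Z)$, so $g^ny=\bigl((I+A+\cdots+A^{n-1})v,z\bigr)$; if $A$ has a nontrivial rotational part the partial sums can stay bounded even though $v\neq0$ (e.g.\ a rotation about a center different from the origin), so unboundedness of $\{g^ny\}$ is exactly what has to be proved, not assumed. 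Likewise, the identification $Gy\simeq\mathbb{R}^k$ coming from $G_0\cong\mathbb{R}^k\times T$ is only topological: the $\mathbb{R}^k$-factor does not act by translations on $Y$, and the induced metric on the orbit need not be Euclidean (this is precisely the phenomenon of the orbits of extra Hausdorff dimension in \cite{PW_ex}); the statement that the limit group acts by translations, \cite[Lemma 4.10]{Pan_es0} quoted in Lemma \ref{eGH_gap_flat}, applies only when the orbit is a Euclidean factor, which is not the situation here. Consequently ``bounded in the $\mathbb{R}^k$-coordinates'' and ``bounded in $d_Y$'' cannot be interchanged without a further argument, and your elementary claim about bounded additively closed subsets of $\mathbb{R}^k$ does not yet apply.

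The missing ingredient is the compactness argument that the paper's proof is built on: if $\{g^ny\}$ were bounded, the closure of $\langle g\rangle$ would be a closed subgroup of $\mathrm{Isom}(Y)$ with bounded orbit, hence compact (since $Y$ is proper), and by Proposition \ref{topol_dim_group} (type $(k,0)$) any compact subgroup of $G$ fixes $y$, contradicting $gy\neq y$. Once you insert this, your argument essentially collapses to the paper's three-line proof, which applies the same reasoning directly to the closure $H$ of the subgroup generated by $S$: symmetry and closure under multiplication give $Hy=Sy$, boundedness makes $H$ compact, and type $(k,0)$ forces $Hy=\{y\}$. So the reduction you set up is repairable, but the repair is exactly the paper's argument, and the ``translation'' shortcut as stated is false in general.
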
  

\begin{proof}
	Let $H$ be the closure of the subgroup generated by $S$. The first assumption implies that $Hy={Sy}$. Because $Hy={Sy}$ is bounded, we conclude that $H$ must be a compact subgroup of $G$. Since $(Y,y,G)\in\Omega(\widetilde{M},N)$ is of type $(k,0)$ by Proposition \ref{topol_dim_group}, $H$ fixes $y$. In other words, we have $Sy=Hy=\{y\}$.
\end{proof}  

We are in a position to prove Proposition \ref{R_orb}.

\begin{proof}[Proof of Proposition \ref{R_orb}]
	Let $r_i\to\infty$ be a sequence. We consider the convergence
	$$(r_i^{-1}\widetilde{M},\tilde{p},N,\langle\gamma\rangle)\overset{GH}\longrightarrow (Y,y,G,H).$$
	We shall show that $(Y,y,H)$ is of type $(1,0)$. 
	
	For each $i$, let 
	$$l_i=\min\{l\in\mathbb{Z}_+|d(\gamma^l\tilde{p},\tilde{p})\ge r_i\}.$$
	and let 
	$$S_\gamma(l_i)=\{\mathrm{id},\gamma^{\pm 1},...,\gamma^{\pm l_i}\}$$ be a sequence of symmetric subsets of $\langle \gamma \rangle$. By triangle inequality, we have 
	$$r_i\le |\gamma^{l_i}|\le r_i+|\gamma|,$$
	Passing to a subsequence, we obtain convergence
	$$(r_i^{-1}\widetilde{M},\tilde{p},\gamma^{l_i},S_\gamma(l_i))\overset{GH}\longrightarrow (Y,y,g,A),$$
	where $A$ is a closed symmetric subset of $H$ and $g\in A$ with $d(gy,y)=1$.
	
	\textbf{Claim 1:} The set $Ay$ contains $P(g)y$. Let $b\in \mathbb{Z}_+$. By the choice of $l_i$,
	$$r_i^{-1}d(\gamma^{\lfloor l_i/b\rfloor}\tilde{p},\tilde{p})\le 1,$$
	where $\lfloor\cdot\rfloor$ means the floor function. Thus the sequence $\gamma^{\lfloor l_i/b\rfloor}$ subconverges to some limit $\alpha\in A$. 
	Because
	$$l_i\le b\cdot \lfloor l_i/b\rfloor < l_i+b,$$
	passing to a subsequence if necessary, we can assume that $b\cdot \lfloor l_i/b\rfloor=l_i+b_0$ for some $b_0=0,...,b$ and all $i$. For this subsequence, we have
	$$(r_i^{-1}\widetilde{M},\tilde{p},\gamma^{\lfloor l_i/b\rfloor},\gamma^{b_0},\gamma^{b\cdot \lfloor l_i/b\rfloor})\overset{GH}\longrightarrow (Y,y,\alpha,g_0,g\cdot g_0),$$
	where $g_0\in A$ fixes $y$; moreover, $g\cdot g_0=\alpha^b$. Thus $\alpha$ satisfies
	$$\alpha^b y=g\cdot g_0y=gy.$$
	It follows from Lemma \ref{same_orb_point} that 
	$$\alpha y=\exp(\frac{1}{b}v) y$$ 
	where $\exp(v)=g$. By construction, the limit symmetric subset $A$ contains the set $\{\mathrm{id},\alpha^{\pm 1},...,\alpha^{\pm b}\}$. Therefore, $Ay$ contains the orbit points
	$$\{y,\exp(\pm \dfrac{1}{b}vy),\exp(\pm \dfrac{2}{b}vy),...,\exp(\pm v)y\}.$$
	Because $b$ is an arbitrary positive integer and $Ay$ is closed, we conclude that $Ay$ contains $P(g)y$.
	
	By Claim 1, the limit orbit $Hy$ must contain $\mathbb{R}(g)y$. To this end, we argue by contradiction to prove Proposition \ref{R_orb}. Suppose that $(Y,y,H)$ is not of type $(1,0)$, then there exists an element $\beta\in H$ such that $\beta y\not\in \mathbb{R}(g)y$. Because $(Y,y,G)$ is of type $(k,0)$, by Lemma \ref{power_outsideR} we have
	$$d(\beta^m y, \mathbb{R}(g)y)\to \infty$$
	as $m\to\infty$. Thus we can choose an element as a power of $\beta$, denoted by $h$, such that $d(hy,\mathbb{R}(g)y)\ge 2$. Let $m_i\to \infty$ such that
	$$(r_i^{-1}\widetilde{M},\tilde{p},\gamma^{m_i})\overset{GH}\longrightarrow (Y,y,h).$$
	Because $d(hy,y)\ge 2$, it is clear that $m_i>l_i$ by our choice of $l_i$.
	
	\textbf{Claim 2:} $m_i/l_i\to \infty.$ Suppose that $m_i/l_i\to C\in[1,\infty)$ for a subsequence. We write
	$$m_i=\lfloor C \rfloor\cdot l_i+ o_i,$$
	where $0\le o_i\le l_i$. Note that
	$$(r_i^{-1}\widetilde{M},\tilde{p},\gamma^{\lfloor C \rfloor\cdot l_i},\gamma^{o_i})\overset{GH}\longrightarrow (Y,y, g^{\lfloor C \rfloor},\delta)$$
	with $g^{\lfloor C \rfloor}y\in\mathbb{R}(g)(y)$ and $\delta y \in Ay$. Since $d(\delta y,y)\le 1$, we have
	$$d(hy,\mathbb{R}(g)y)=d(g^{\lfloor C \rfloor}\delta y,\mathbb{R}(g)y)=d(\delta y,\mathbb{R}(g)y)\le 1.$$
	We result in a contradiction to $d(hy,\mathbb{R}(g)y)\ge 2$. This proves Claim 2.
	
	For each $i$, let 
	$$d_i:=\max \{d(\gamma^k\tilde{p},\tilde{p})\ |\ k=l_i,l_i+1,...,m_i\}\to \infty.$$
	It is clear that $d_i\ge r_i$.
	
	\textbf{Claim 3:} $d_i/r_i\to \infty$. Suppose the contrary, that is, $d_i/r_i\to C \in[1,\infty)$. Let
	$$S_\gamma(m_i)=\{\mathrm{id},\gamma^{\pm 1},...,\gamma^{\pm m_i}\}.$$
	Then we obtain convergence
	$$(d_i^{-1}\widetilde{M},\tilde{p},S_\gamma(l_i),S_{\gamma}(m_i))\overset{GH}\longrightarrow (C^{-1}Y,y,A,B).$$
	Recall that $Ay$ contains $P(g)y$ by Claim 1. Together with Claim 2 that $m_i/l_i\to\infty$, we see that $By$ must contain $\mathbb{R}(g)y$, which is unbounded. On the other hand, by the choice of $d_i$, $By$ should be contained in $\bar{B}_1(y)$; a contradiction. This proves Claim 3.
	
	Next, we consider the convergence
	$$(d_i^{-1}\widetilde{M},\tilde{p},N,\langle\gamma\rangle,\gamma^{m_i},S_\gamma(m_i))\overset{GH}\longrightarrow (Y',y',G',H',h',B').$$
	Due to the choice of $d_i$, it is clear that
	$$d_H(B'y',y')=1.$$
	Also, it follows from Claim 3 that $h'y'=y'$.
	
	\textbf{Claim 4:} The set $B'y'$ is closed under multiplication, that is, if $\beta_1,\beta_2\in B'$, then $\beta_1\beta_2y'\in B'y'$. In fact, let $b_{i,1},b_{i,2}\in [-m_i,m_i]$ be two sequences of integers such that
	$$(d_i^{-1}\widetilde{M},\tilde{p},\gamma^{b_{i,1}},\gamma^{b_{i,2}})\overset{GH}\longrightarrow (Y',y',\beta_1,\beta_2).$$
	If $b_{i,1}+b_{i,2}\in [-m_i,m_i]$, then $\beta_1\beta_2\in B'$ and the claim holds trivially. If not, we can write
	$$b_{i,1}+b_{i,2}=\pm m_i + o_i,$$
	where $o_i\in [-m_i,m_i]$. Let $\beta_0\in B'$ be the limit of $\gamma^{o_i}$ after passing to a convergent subsequence. Then 
	$$\beta_1\beta_2 y'=\lim\limits_{i\to\infty} \gamma^{o_i} \cdot \gamma^{\pm m_i} \tilde{p}=\beta_0(h')^{\pm 1} y'=\beta_0 y'\in B'y'.$$
	
	Lastly, we apply Lemma \ref{sym_closed_multi} to $B'$ and conclude that $B'y'=y'$. We end in a contradiction to $d_H(B'y',y')=1$. This contradiction completes the proof. 
\end{proof}

Let $z\in Hy$ be an orbit point. Because $Hy$ is connected, we can write $z=hy$ for some $h\in H_0$. Let $v$ in the Lie algebra of $H_0$ such that $\exp(v)=h$. For convenience, in the rest of the paper, we will denote the orbit point $\exp(tv)y$ by $(th)y$, where $t\in\mathbb{R}$. By the proof of Lemma \ref{para_orbit}, this point $(th)y$ is independent of the choice of $v$ and $h$. Also, with Proposition \ref{R_orb}, we have $Hy=\mathbb{R}(h)y$.

For the rest of this section, we prove some uniform controls on the path $P(h)y$ that will be used later in Section \ref{sec_abel}.

\begin{lem}\label{boom_bound}
	There exists a constant $C_1=C_1(\widetilde{M},\gamma)$ such that the following holds.
	
	For any $(Y,y,H)\in\Omega(\widetilde{M},\langle\gamma\rangle)$ and any $h\in H_0$ with $d(hy,y)\not= 0$, we have
	$$d((th)y,y)\le C_1\cdot d(hy,y)$$
	for all $t\in[0,1]$.
\end{lem}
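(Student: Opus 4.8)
The plan is to argue by contradiction, by a rescaling trick that exploits the compactness of $\Omega(\widetilde{M},N)$ (Proposition \ref{cpt_cnt}) together with Lemma \ref{sym_closed_multi}; the mechanism is parallel to the final part of the proof of Proposition \ref{R_orb}.

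Suppose the conclusion fails. Then for each $j\in\mathbb{N}$ there are $(Y_j,y_j,H_j)\in\Omega(\widetilde{M},\langle\gamma\rangle)$, an element $h_j\in(H_j)_0$ with $d(h_jy_j,y_j)\neq0$, and $t_j\in[0,1]$ with
\[
d\big((t_jh_j)y_j,y_j\big)>j\cdot d(h_jy_j,y_j);
\]
replacing $t_j$ by a point where $t\mapsto d((th_j)y_j,y_j)$ attains its maximum on $[0,1]$ only strengthens this. Rescaling $Y_j$ by the positive constant $\max_{t\in[0,1]}d((th_j)y_j,y_j)$ keeps $(Y_j,y_j,H_j)$ in $\Omega(\widetilde{M},\langle\gamma\rangle)$, so I may assume
\[
\max_{t\in[0,1]}d\big((th_j)y_j,y_j\big)=1,\qquad d(h_jy_j,y_j)\to0 .
\]
Each $Y_j$ arises as a limit $(r_{i,j}^{-1}\widetilde{M},\tilde{p},\langle\gamma\rangle)\overset{GH}\longrightarrow(Y_j,y_j,H_j)$; passing to a subsequence in $i$ I may let the full group $N$ converge simultaneously, obtaining $(Y_j,y_j,G_j)\in\Omega(\widetilde{M},N)$ with $H_j\leq G_j$. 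By compactness of $\Omega(\widetilde{M},N)$, after a further subsequence,
\[
(Y_j,y_j,G_j,H_j)\overset{GH}\longrightarrow(Y_\infty,y_\infty,G_\infty,H_\infty),\qquad(Y_\infty,y_\infty,G_\infty)\in\Omega(\widetilde{M},N),\quad H_\infty\leq G_\infty,
\]
and $t_j\to t_\infty\in[0,1]$. Writing $h_j=\exp(v_j)$ in $(H_j)_0$, set $S_j=\{\exp(sv_j):s\in[-1,1]\}$, a closed symmetric subset of $(H_j)_0\subseteq G_j$ with $S_jy_j=\{(sh_j)y_j:s\in[-1,1]\}\subseteq\overline{B_1(y_j)}$. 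After one more subsequence $S_j\to S_\infty$ for a closed symmetric $S_\infty\subseteq G_\infty$; since the $G_\infty$-action is proper, $S_\infty$ is compact, $S_\infty y_\infty\subseteq\overline{B_1(y_\infty)}$, and $(t_jh_j)y_j\to q_\infty$ for some $q_\infty\in S_\infty y_\infty$ with $d(q_\infty,y_\infty)=1$.

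The key point is that $S_\infty y_\infty$ is closed under multiplication, and this is where the rescaling is used: since $d(h_jy_j,y_j)\to0$, any subsequential limit of $h_j$ in the compact set $S_\infty$ fixes $y_\infty$. Given $g_1,g_2\in S_\infty$, write $g_m=\lim_j\exp(s_{m,j}v_j)$ with $s_{m,j}\in[-1,1]$, $s_{m,j}\to s_m$; then $g_1g_2y_\infty=\lim_j\exp\big((s_{1,j}+s_{2,j})v_j\big)y_j$. If the subsequential limit of $s_{1,j}+s_{2,j}$ lies in $[-1,1]$, then eventually $s_{1,j}+s_{2,j}\in[-1,1]$, so $\exp((s_{1,j}+s_{2,j})v_j)\in S_j$ and the limit lies in $S_\infty y_\infty$; if it lies in $(1,2]$, write $s_{1,j}+s_{2,j}=1+o_j$ with $o_j\in(0,1]$ and use $\exp((1+o_j)v_j)y_j=\exp(o_jv_j)\,(h_jy_j)$, which converges to $g_0y_\infty\in S_\infty y_\infty$ because $\exp(o_jv_j)\in S_j$ subconverges in $S_\infty$ while $h_jy_j\to y_\infty$; the case of limit in $[-2,-1)$ is symmetric. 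Thus $S_\infty y_\infty$ is bounded and closed under multiplication, so Lemma \ref{sym_closed_multi}, applied to $(Y_\infty,y_\infty,G_\infty)\in\Omega(\widetilde{M},N)$ and the closed symmetric subset $S_\infty\subseteq G_\infty$, yields $S_\infty y_\infty=\{y_\infty\}$ — contradicting $d(q_\infty,y_\infty)=1$. This contradiction proves the lemma.

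I expect the multiplication-closure step to be the main obstacle: the normalization $\max_{t\in[0,1]}d((th_j)y_j,y_j)=1$ is chosen precisely so that $h_j$ becomes trivial at the limit basepoint, which is exactly what lets a product $\exp((1+o_j)v_j)$ be absorbed, in the limit, back into $S_\infty$. The remaining work is routine bookkeeping — arranging $\langle\gamma\rangle$ and $N$ to converge along common scales so that $S_\infty$ is a genuine symmetric subset of a space in $\Omega(\widetilde{M},N)$ and Lemma \ref{sym_closed_multi} applies — which is handled by the precompactness results for equivariant Gromov--Hausdorff convergence and for symmetric subsets.
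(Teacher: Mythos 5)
Your proof is correct and follows essentially the same route as the paper: contradict, rescale so that $\max_{t\in[0,1]}d((th_j)y_j,y_j)=1$ while $d(h_jy_j,y_j)\to 0$, pass to a limit symmetric subset, verify closure of its orbit under multiplication via the absorption trick $\exp((\pm1+o_j)v_j)y_j=\exp(o_jv_j)(h_j^{\pm1}y_j)$, and conclude with Lemma \ref{sym_closed_multi}. Your extra step of converging $N$ alongside $\langle\gamma\rangle$ so that Lemma \ref{sym_closed_multi} literally applies is a harmless (indeed slightly more careful) bookkeeping point that the paper leaves implicit; the only tiny slip is the phrase ``then eventually $s_{1,j}+s_{2,j}\in[-1,1]$,'' which should instead be handled per index $j$ (or by writing the sum as $\pm1+o_j$ in all cases), exactly as in the paper's Claim 4 argument.
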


\begin{proof}
	Without loss of generality, we assume that $d(hy,y)=1$ by scaling $(Y,y,H)$. We argue by contradiction to prove the lemma. Suppose that we have a sequence of spaces $(Y_j,y_j,H_j)\in \Omega(\widetilde{M},\langle\gamma\rangle)$ and $h_j\in H_j$ with $d(h_jy_j,y_j)=1$, but 
	$$R_j:=\max_{t\in[0,1]}d((th_j)y_j,y_j)\to\infty.$$
	Scaling the sequence by $R_j^{-1}$ and passing to a convergent subsequence, we obtain
	$$(R_j^{-1}Y_j,y_j,H_j)\overset{GH}\longrightarrow (Y',y',H')\in\Omega(\widetilde{M},\langle\gamma\rangle).$$ 
	The hypothesis implies $h_jy_j\overset{GH}\to y'$ with respect to the above convergence. 
	We consider the closed symmetric subset $S_j=\{th_j|t\in[0,1]\}$ of $H_j$ and let $S'\subset H'$ be its limit symmetric subset, that is,
	$$(R_j^{-1}Y_j,y_j,S_j)\overset{GH}\longrightarrow (Y',y',S').$$
	
	We claim that the set $S'y'$ is closed under multiplication; the proof is similar to Claim 4 in the proof of Proposition \ref{R_orb}. In fact, for any $\beta_1,\beta_2\in S'$, we have $t_{j,1},t_{j,2}\in[-1,1]$ such that
	$$(R_j^{-1}Y_j,y_j,t_{j,1}h_j,t_{j,2}h_j)\overset{GH}\longrightarrow (Y',y',\beta_1,\beta_2).$$
	If $t_{j,1}+t_{j,2}\in[-1,1]$, then it is clear that $\beta_1\beta_2\in S'$. If not, we write
	$$t_{j,1}+t_{j,2}=\pm 1 + o_j,$$
	where $o_j\in [-1,1]$. The sequence $o_jh_j\in S_j$ subconverges to a limit $\beta_0\in S'$. Then
	$$\beta_1\beta_2y'=\lim_{j\to\infty} (o_jh_j)\cdot(\pm h_j) y_j=\beta_0y'\in S'y'.$$ 
	 
	Since the set $S'y'$ is closed under multiplication and is contained in $\bar{B}_1(y')$, by Lemma \ref{sym_closed_multi}, we obtain $S'y'=y'$. On the other hand, by the construction of $S_j$ and $R_j$, $S'y'$ must have a point with distance $1$ to $y'$. A contradiction.
\end{proof}


%

\begin{lem}\label{orbit_length_bound}
	Given $s,\epsilon\in(0,1)$, there exists a constant $L_0(\widetilde{M},\gamma,s,\epsilon)$ such that for any $(Y,y,H)\in\Omega(\widetilde{M},\langle \gamma \rangle)$ and any $h\in H_0$ with $d(hy,y)=1$, there exists an integer $2\le L\le L_0$ with
	$$L^{1-s}\cdot d((\frac{1}{L}h)y,y)\le\epsilon.$$
\end{lem}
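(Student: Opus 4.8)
The plan is to argue by contradiction, pass to a limit space via the compactness of $\Omega(\widetilde M,\langle\gamma\rangle)$, and there exploit the fact that the metric cone structure confines the orbit $H y$ to a flat Euclidean factor, where it is a Lipschitz (indeed real-analytic) curve — which is already incompatible with the failure of the lemma.

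\smallskip

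First I would suppose the statement fails for some fixed $s,\epsilon\in(0,1)$. Then for every $j\in\mathbb{N}$ there is a space $(Y_j,y_j,H_j)\in\Omega(\widetilde M,\langle\gamma\rangle)$ and an element $h_j\in(H_j)_0$ with $d(h_jy_j,y_j)=1$ such that $L^{1-s}\,d((\tfrac1L h_j)y_j,y_j)>\epsilon$ for every integer $2\le L\le j$. Using the analogue of Proposition \ref{cpt_cnt} for $\langle\gamma\rangle\cong\mathbb{Z}$, I pass to a subsequence with $(Y_j,y_j,H_j)\to(Y_\infty,y_\infty,H_\infty)\in\Omega(\widetilde M,\langle\gamma\rangle)$ and $h_j\to h_\infty$, where $d(h_\infty y_\infty,y_\infty)=1$. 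Writing $h_j=\exp(v_{h_j})$ and passing the one-parameter subgroups to the limit — legitimate since the limit isometry groups are Lie groups by Colding--Naber \cite{CN} — gives $h_\infty=\exp(v_{h_\infty})\in(H_\infty)_0$ and, for each fixed integer $L\ge 2$, $(\tfrac1L h_j)y_j\to(\tfrac1L h_\infty)y_\infty$. Hence $d((\tfrac1L h_\infty)y_\infty,y_\infty)\ge \epsilon L^{s-1}$ for \emph{every} integer $L\ge 2$. (Combined with Lemma \ref{boom_bound} this even upgrades to $d((th_\infty)y_\infty,y_\infty)\ge c\,t^{1-s}$ for all $t\in(0,\tfrac12]$, but the bound at reciprocals of integers already suffices below.)

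\smallskip

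The heart of the argument is the structure of the limit orbit. Since $\widetilde M$ is conic at infinity, $Y_\infty$ is a metric cone with vertex $y_\infty$; by the splitting discussion in Section \ref{sec_pre} we may write $Y_\infty=\mathbb{R}^{k}\times C(Z)$ with $C(Z)$ line-free and with unique vertex $z$, so that $y_\infty=(0,z)$, and every orbit point of $\mathrm{Isom}(Y_\infty)$ at $y_\infty$ lies in the Euclidean factor $\mathbb{R}^{k}\times\{z\}$. Because $(Y_\infty,y_\infty,H_\infty)$ is of type $(1,0)$ by Proposition \ref{R_orb}, the maximal torus of $(H_\infty)_0$ fixes $y_\infty$, and $(H_\infty)_0$ acts on $\mathbb{R}^{k}$ through the $\mathrm{Isom}(\mathbb{R}^{k})=O(k)\ltimes\mathbb{R}^{k}$ factor. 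Therefore $t\mapsto(th_\infty)y_\infty$ is the orbit of $y_\infty$ under a one-parameter subgroup of $\mathrm{Isom}(\mathbb{R}^{k})$: if $(B,w)\in\mathfrak{so}(k)\oplus\mathbb{R}^{k}$ is its infinitesimal generator, then, identifying $\mathbb{R}^{k}\times\{z\}$ with $\mathbb{R}^{k}$ and $y_\infty$ with $0$,
$$(th_\infty)y_\infty=\int_0^{t}\exp(sB)\,w\,ds,\qquad\text{so}\qquad d\big((th_\infty)y_\infty,y_\infty\big)=\Big|\int_0^{t}\exp(sB)\,w\,ds\Big|\le |w|\,t\quad (t\ge 0),$$
since $\exp(sB)$ is orthogonal. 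In particular the orbit curve is Lipschitz in $t$, with a finite (though not a priori uniform) constant $|w|$.

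\smallskip

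Combining the two estimates gives $\epsilon L^{s-1}\le d((\tfrac1L h_\infty)y_\infty,y_\infty)\le |w|L^{-1}$, i.e.\ $\epsilon L^{s}\le |w|$, for \emph{every} integer $L\ge 2$; this is absurd because $L^{s}\to\infty$. This contradiction establishes the existence of a constant $L_0=L_0(\widetilde M,\gamma,s,\epsilon)$ as claimed; the role of the compactness argument is precisely to turn the space-by-space bound $L\ge(|w|/\epsilon)^{1/s}$ into a uniform $L_0$. I expect the only genuinely delicate point to be the convergence $d((\tfrac1L h_j)y_j,y_j)\to d((\tfrac1L h_\infty)y_\infty,y_\infty)$ under equivariant Gromov--Hausdorff convergence — i.e.\ the passage of the one-parameter subgroups $\exp(tv_{h_j})$ to $\exp(tv_{h_\infty})$ — together with the bookkeeping needed to place the limit orbit inside, and to parametrize it within, the flat factor of the cone (this is exactly where "conic at infinity" and Proposition \ref{R_orb} are used). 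Once the orbit sits in a Euclidean factor, the remaining estimate is just the Lipschitz bound displayed above and presents no analytic difficulty.
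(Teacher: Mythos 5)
Your proposal is correct and follows essentially the same route as the paper: contradiction, compactness of $\Omega(\widetilde{M},\langle\gamma\rangle)$, a uniform lower bound $d((\frac{1}{L}h_\infty)y_\infty,y_\infty)\ge \epsilon L^{s-1}$ for all integers $L\ge 2$ in the limit, and then a contradiction with the finite length (your explicit Lipschitz bound) of the orbit curve sitting in the Euclidean factor of the cone. The "delicate point" you flag — that $(\frac{1}{L}h_j)y_j\to(\frac{1}{L}h_\infty)y_\infty$ — is exactly the Claim in the paper's proof, which is justified not merely by the limit groups being Lie groups but by Lemma \ref{boom_bound} (giving precompactness of the fractional powers $\frac{1}{L}h_j$) together with Lemma \ref{same_orb_point} (identifying the limit orbit point from $\beta^L y_\infty=h_\infty y_\infty$).
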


\begin{proof}
	We argue by contradiction. Suppose that for each integer $L_j=j$, there are $(Y_j,y_j,H_j)\in \Omega(\widetilde{M},\langle \gamma \rangle)$ and $h_j\in H_j$ such that $d(h_jy_j,y_j)=1$ and 
	$$L^{1-s}\cdot d((\frac{1}{L}h_j)y_j,y_j) > \epsilon$$
	for all $2\le L\le L_j$. After passing to a subsequence, we consider the convergence
	$$(Y_j,y_j,H_j,h_j)\overset{GH}\longrightarrow (Y',y',H',h').$$

	\textbf{Claim:} For any integer $L\ge 2$, we have
	$$(Y_j,(\frac{1}{L}h_j)y_j)\overset{GH}\to (Y',(\frac{1}{L}h')y').$$
	In fact, due to Lemma \ref{boom_bound}, there is a constant $C_1$ such that
	$$d((\frac{1}{L}h_j)y_j,y_j)\le C_1$$
	for any integer $L\ge 2.$ Thus after passing to a subsequence, we can assume that $\frac{1}{L}h_j$ converges to some limit isometry $\beta\in H'$ as $j\to\infty$. Note that
	$$\beta^L y'=\lim\limits_{j\to\infty} (\frac{1}{L}h_j)^L y_j = \lim\limits_{j\to\infty} h_j y_j= h'y'.$$
	Applying Lemma \ref{same_orb_point}, we see that $\beta y'= (\frac{1}{L}h')y'$ and the claim follows.
	
    The above claim and the hypothesis together imply that for any integer $L\ge 2$,
	$$L^{1-s}\cdot d((\frac{1}{L}h')y',y') = \lim\limits_{j\to\infty} L^{1-s}\cdot d((\frac{1}{L}h_j)y_j,y_j) \ge \epsilon.$$
	Thus
	$$L\cdot d((\frac{1}{L}h')y',y')\ge L^s \epsilon \to \infty $$
	as $L\to\infty$. This shows that in $(Y',y',H')$, the path $P(h')y'$ from $y'$ to $h'y'$ has infinite length, which cannot be true since $P(h')y'$ comes from an $\mathbb{R}$-orbit of some isometric actions embedded in a Euclidean factor $\mathbb{R}^k$.
\end{proof}

\section{Almost linear growth and virtual abelianness}\label{sec_abel}

We prove the almost linear growth estimate (Theorem \ref{growth_estimate}) and Theorem A in this section.

In Lemma \ref{fraction_convergence} and \ref{length_cut_estimate} below, we always assume that the manifold $M$ satisfies the assumptions in Theorem A(1) and the fundamental group is an infinite nilpotent group $N$. We fix $\gamma$ as an element of infinite order in $N$. The purpose of Lemma \ref{length_cut_estimate} is to transfer Lemma \ref{orbit_length_bound}, as an estimate in the asymptotic limits, to an estimate on $\widetilde{M}$ at large scale.


\begin{lem}\label{fraction_convergence}
	Let $b_i\to\infty$ be a sequence of positive integers and let $r_i=d(\gamma^{b_i}\tilde{p},\tilde{p})$. 
	We consider the convergence
	$$(r_i^{-1}\widetilde{M},\tilde{p},\langle\gamma\rangle,\gamma^{b_i})\overset{GH}\longrightarrow (Y,y,H,h). $$
	Then for any integer $L\in\mathbb{Z}_+$, we have $$(r_i^{-1}\widetilde{M},\gamma^{\lceil b_i/L \rceil}\tilde{p})\overset{GH}\rightarrow (Y,(\frac{1}{L}h)y),$$
	where $\lceil \cdot \rceil$ means the ceiling function.
\end{lem}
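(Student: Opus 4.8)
\textbf{Proof proposal for Lemma \ref{fraction_convergence}.}

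The plan is to control the sequence $\gamma^{\lceil b_i/L\rceil}$ by comparing it with the element $h$ that we already understand, using the fact that $L$ copies of $\gamma^{\lceil b_i/L\rceil}$ nearly reconstitute $\gamma^{b_i}$. First I would note that $\lceil b_i/L\rceil \le b_i/L + 1$, so the displacement $d(\gamma^{\lceil b_i/L\rceil}\tilde p,\tilde p)$ is bounded above (after rescaling by $r_i^{-1}$) by roughly $1/L$ plus a vanishing error coming from the single extra factor $|\gamma|/r_i\to 0$; more carefully, $d(\gamma^{\lceil b_i/L\rceil}\tilde p,\tilde p)\le d(\gamma^{\lfloor b_i/L\rfloor}\tilde p,\tilde p)+|\gamma|$ and $d(\gamma^{\lfloor b_i/L\rfloor}\tilde p,\tilde p)\le \tfrac1L d(\gamma^{b_i}\tilde p,\tilde p)\cdot(1+o(1))$ by subadditivity of the displacement along powers, hence this rescaled sequence is bounded. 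So after passing to a subsequence, $\gamma^{\lceil b_i/L\rceil}$ converges to some limit isometry $\beta\in H$, and the orbit point $\gamma^{\lceil b_i/L\rceil}\tilde p$ converges to $\beta y$.

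Next I would pin down $\beta$ by raising it to the $L$-th power. Write $L\lceil b_i/L\rceil = b_i + c_i$ where $0\le c_i\le L-1$ is bounded; passing to a further subsequence, $c_i$ is eventually a constant $c\in\{0,\dots,L-1\}$, so $\gamma^{c_i}=\gamma^{c}$ is a fixed element whose rescaled displacement tends to $0$, hence $\gamma^{c_i}\overset{GH}\to g_0\in H$ with $g_0 y=y$. Then, using the equivariant convergence and the group relation $\gamma^{L\lceil b_i/L\rceil}=\gamma^{b_i}\gamma^{c_i}$, I get
$$\beta^L y = \lim_{i\to\infty}\gamma^{L\lceil b_i/L\rceil}\tilde p = \lim_{i\to\infty}\gamma^{b_i}\gamma^{c_i}\tilde p = h g_0 y = hy.$$
By Lemma \ref{same_orb_point}, this forces $\beta y = (\tfrac1L h)y$, which identifies the limit orbit point independently of the chosen subsequence. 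Since every subsequence has a further subsequence converging to this same point $(\tfrac1L h)y$, the full sequence $\gamma^{\lceil b_i/L\rceil}\tilde p$ converges to $(\tfrac1L h)y$, which is the claim.

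The main obstacle I anticipate is purely bookkeeping: making sure the exponent arithmetic with the ceiling function (the sign and size of the correction term $c_i$, and the subadditivity estimate that keeps the rescaled displacements bounded) is handled cleanly so that the single application of Lemma \ref{same_orb_point} is legitimate. The genuinely essential input is Lemma \ref{same_orb_point}: it is what lets us conclude from $\beta^L y = hy$ (an equality of orbit points, not of group elements) that $\beta y$ equals the canonically-defined point $(\tfrac1L h)y$; without it the limit could in principle depend on the subsequence. Everything else is a standard diagonal/subsequence argument within the compact framework of Proposition \ref{cpt_cnt}.
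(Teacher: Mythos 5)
Your second half is exactly the paper's argument: write $L\lceil b_i/L\rceil=b_i+c_i$ with $c_i$ eventually constant, note the fixed correction $\gamma^{c}$ limits to an isometry fixing $y$, deduce $\beta^L y=hy$, invoke Lemma \ref{same_orb_point} to identify $\beta y=(\tfrac1L h)y$, and conclude by the subsequence principle. The gap is in your first step. You assert $d(\gamma^{\lfloor b_i/L\rfloor}\tilde p,\tilde p)\le \tfrac1L\, d(\gamma^{b_i}\tilde p,\tilde p)\cdot(1+o(1))$ ``by subadditivity of the displacement along powers.'' Subadditivity (the triangle inequality) gives precisely the opposite inequality: $d(\gamma^{b_i}\tilde p,\tilde p)\le L\, d(\gamma^{\lceil b_i/L\rceil}\tilde p,\tilde p)+L|\gamma|$, i.e.\ a \emph{lower} bound of order $r_i/L$ on the intermediate displacement, not an upper bound. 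There is no elementary reason why $m\mapsto d(\gamma^m\tilde p,\tilde p)$ should be monotone or approximately linear; a priori $\gamma^{\lceil b_i/L\rceil}\tilde p$ could be far more than $r_i$ away from $\tilde p$, and this lack of control over intermediate powers is exactly the difficulty the escape-rate machinery is built to handle. The paper itself flags this: it notes that no analogue of Lemma \ref{boom_bound} is available for this sequence, which is why the boundedness must be proved first.

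The paper closes this gap with an argument occupying half of its proof. Assuming $r_i^{-1}d(\gamma^{\lceil b_i/L\rceil}\tilde p,\tilde p)\to\infty$, it sets $R_i=\max_{1\le m\le b_i}d(\gamma^m\tilde p,\tilde p)$ (so $r_i^{-1}R_i\to\infty$), rescales by $R_i^{-1}$, and passes to the limit of the symmetric sets $S_\gamma(b_i)=\{\mathrm{id},\gamma^{\pm1},\dots,\gamma^{\pm b_i}\}$. In this rescaling $\gamma^{b_i}$ converges to an isometry fixing the basepoint, so the limit orbit set $By'$ is closed under multiplication (the Claim 4 argument from the proof of Proposition \ref{R_orb}) and bounded; Lemma \ref{sym_closed_multi}, which rests on Proposition \ref{topol_dim_group}, then forces $By'=\{y'\}$, contradicting that by the choice of $R_i$ the set $By'$ contains a point at distance $1$ from $y'$. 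So the boundedness you treated as bookkeeping is the real content of the lemma; once it is established, your remaining steps are correct and coincide with the paper's.
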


The above statement also holds if one replaces the power $\lceil b_i/L \rceil$ by $\lfloor b_i/L \rfloor$. We use the ceiling function in Lemma \ref{fraction_convergence} for later applications.

\begin{proof}[Proof of Lemma \ref{fraction_convergence}]
	The statement of Lemma \ref{fraction_convergence} is to some extent similar to the Claim in the proof of Lemma \ref{orbit_length_bound}, but at the moment we don't have an estimate similar to Lemma \ref{boom_bound} on the sequence. So, we need to first derive a similar estimate on the distance.
	
	\textbf{Claim}: There is a number $C$ such that
	$$r_i^{-1}d(\gamma^{\lceil b_i/L \rceil}\tilde{p},\tilde{p})\le C$$
	for all $i$. This claim assures that $\gamma^{\lceil b_i/L \rceil}\tilde{p}$ subconverges to some limit point in $Y$. Suppose the contrary, that is, 	
	$$r_i^{-1}d(\gamma^{\lceil b_i/L \rceil}\tilde{p},\tilde{p})\to \infty.$$
	For each $i$, we put 
	$$R_i:=\max_{m=1,...,b_i} d(\gamma^m\tilde{p},\tilde{p}).$$
	If follows from the hypothesis that $r_i^{-1}R_i\to\infty$. We consider an asymptotic cone from the sequence $R_i$:
	$$(R_i^{-1}\widetilde{M},\tilde{p},\langle\gamma\rangle,\gamma^{b_i},S_\gamma(b_i))\overset{GH}\longrightarrow (Y',y',h',B),$$
	where 
	$$S_\gamma(b_i)=\{\mathrm{id},\gamma^{\pm 1},...,\gamma^{\pm b_i}\}$$
	and $d(h'y',y')=0$. By Lemma \ref{sym_closed_multi} and the same argument as Claim 4 in the proof of Proposition \ref{R_orb}, we see that $By'$ is closed under multiplication and thus $By'=y'$. On the other hand, by the construction $S_\gamma(b_i)$ and $R_i$, $By'$ should contain a point with distance $1$ to $y'$. This contradiction verifies the claim.
	
	For convenience, below we write $m_i=\lceil b_i/L \rceil$. With the claim, we can pass to a subsequence such that
	$$(r_i^{-1}\widetilde{M},\tilde{p},\gamma^{m_i})\overset{GH}\longrightarrow (Y,y,\alpha),$$
	where $\alpha\in H$. Since 
	$$Lm_i-L\le b_i \le Lm_i$$
	for each $i$, we can pass to a subsequence such that $b_i=Lm_i-K$, where $K$ is some integer between $0$ and $L$. Thus
	$$\gamma^{b_i}=\gamma^{Lm_i}\cdot \gamma^{-K}\overset{GH}\to \alpha^L\cdot\beta$$
	for some $\beta\in H$ with $\beta y=y$. Recall that $h\in H$ is the limit of $\gamma^{b_i}$. It follows that $\alpha^L\beta=h$ and
	$$(\frac{1}{L}h)^L y=hy=\alpha^L\beta y=\alpha^L y.$$
	Applying Lemma \ref{same_orb_point}, we conclude that $(\frac{1}{L}h) y=\alpha y$, that is, $\gamma^{m_i}\tilde{p}\overset{GH}\to (\frac{1}{L}h) y$.
\end{proof}

\begin{lem}\label{length_cut_estimate}
	Give $s\in(0,1)$, there are constants $L_0=L_0(\widetilde{M},\gamma,s)$ and $R_0=R_0(\widetilde{M},\gamma,s)$ such that for all $b\in\mathbb{Z}_+$ with $|\gamma^b|\ge R_0$, there is some integer $2\le L\le L_0$ with
	$$|\gamma^b|\ge L^{1-s}\cdot|\gamma^{\lceil b/L \rceil}|,$$
	where $\lceil \cdot \rceil$ means the ceiling function.
\end{lem}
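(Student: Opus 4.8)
The plan is to prove Lemma \ref{length_cut_estimate} by contradiction, converting a failure of the inequality on $\widetilde{M}$ into a failure of Lemma \ref{orbit_length_bound} in an equivariant asymptotic cone. Fix $s\in(0,1)$, and set $\epsilon=1/2$ (any fixed constant in $(0,1)$ will do); let $L_0=L_0(\widetilde{M},\gamma,s,\epsilon)$ be the constant furnished by Lemma \ref{orbit_length_bound}. Suppose the conclusion fails for this $L_0$: then there is a sequence $b_i\to\infty$ of positive integers with $|\gamma^{b_i}|\to\infty$ such that, for every integer $2\le L\le L_0$,
\begin{equation*}
	|\gamma^{b_i}| < L^{1-s}\cdot |\gamma^{\lceil b_i/L\rceil}|.
\end{equation*}
Put $r_i=d(\gamma^{b_i}\tilde p,\tilde p)=|\gamma^{b_i}|$. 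After passing to a subsequence, I would take the equivariant convergence
\begin{equation*}
	(r_i^{-1}\widetilde{M},\tilde p,\langle\gamma\rangle,\gamma^{b_i})\overset{GH}\longrightarrow (Y,y,H,h),
\end{equation*}
so that $d(hy,y)=1$. By Proposition \ref{R_orb}, $(Y,y,H)$ is of type $(1,0)$, and since the orbit is connected we may assume $h\in H_0$.

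Next I would apply Lemma \ref{fraction_convergence}: for every fixed integer $L\ge 2$, the sequence $\gamma^{\lceil b_i/L\rceil}\tilde p$ converges (in $r_i^{-1}\widetilde{M}$) to the orbit point $(\tfrac1L h)y$. Consequently
\begin{equation*}
	r_i^{-1}|\gamma^{\lceil b_i/L\rceil}| = r_i^{-1} d(\gamma^{\lceil b_i/L\rceil}\tilde p,\tilde p) \longrightarrow d\bigl((\tfrac1L h)y,\,y\bigr).
\end{equation*}
Dividing the assumed inequality by $r_i$ and letting $i\to\infty$ gives, for every integer $2\le L\le L_0$,
\begin{equation*}
	1 \le L^{1-s}\cdot d\bigl((\tfrac1L h)y,\,y\bigr),
\end{equation*}
i.e. $L^{1-s}\cdot d((\tfrac1L h)y,y)\ge 1 > \epsilon=\tfrac12$ for all such $L$. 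But $(Y,y,H)\in\Omega(\widetilde{M},\langle\gamma\rangle)$ and $h\in H_0$ with $d(hy,y)=1$, so Lemma \ref{orbit_length_bound} asserts the existence of some integer $2\le L\le L_0$ with $L^{1-s}\cdot d((\tfrac1L h)y,y)\le\epsilon$ — a contradiction. This yields $L_0$; the threshold $R_0=R_0(\widetilde{M},\gamma,s)$ is then obtained by a second, routine contradiction argument: if no such $R_0$ worked, one would get a sequence $b_i$ with $|\gamma^{b_i}|\to\infty$ for which the cut inequality fails for every $2\le L\le L_0$, which is exactly the situation just ruled out. (Alternatively one packages both constants into a single contradiction sequence from the start.)

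The main obstacle is making sure the limiting step is legitimate: one must verify that $r_i\to\infty$ (so that the blow-down is genuinely an asymptotic cone and Proposition \ref{R_orb} applies), which is immediate from $|\gamma^{b_i}|\to\infty$; and that Lemma \ref{fraction_convergence} applies with the ceiling function and with a \emph{fixed} $L$ while $b_i$ varies — this is exactly the form in which that lemma is stated, so no extra work is needed, but one should be careful that the finitely many values $L=2,\dots,L_0$ can be handled simultaneously by passing to a common subsequence. A minor subtlety is that Lemma \ref{fraction_convergence} requires $r_i = d(\gamma^{b_i}\tilde p,\tilde p)$ exactly, which is how we have set things up, so the hypotheses match verbatim. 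Everything else is bookkeeping.
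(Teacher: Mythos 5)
Your argument coincides with the paper's proof: both take $L_0=L_0(\widetilde{M},\gamma,s,1/2)$ from Lemma \ref{orbit_length_bound}, assume a contradicting sequence $b_i\to\infty$ with the cut inequality failing for all $2\le L\le L_0$, blow down at scale $r_i=|\gamma^{b_i}|$, use Lemma \ref{fraction_convergence} to get $d((\tfrac{1}{L}h)y,y)\ge L^{-(1-s)}$ for each such $L$, and contradict Lemma \ref{orbit_length_bound}. The proposal is correct and essentially identical to the paper's proof, including the bookkeeping points (common subsequence for the finitely many $L$, and folding $R_0$ into the contradiction sequence).
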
	

\begin{proof}
	Let $L_0=L_0(\widetilde{M},\gamma,s,1/2)$, the constant in Lemma \ref{orbit_length_bound}. We argue by contradiction to prove the statement. Suppose that there is a sequence $b_i\to\infty$ such that
	$$|\gamma^{b_i}|\le L^{1-s}\cdot |\gamma^{\lceil b_i/L\rceil}|$$
	for all $L=2,...,L_0.$ Let $r_i=|\gamma^{b_i}|\to\infty$. We consider
	$$(r_i^{-1}\widetilde{M},\tilde{p},\langle\gamma\rangle,\gamma^{b_i})\overset{GH}\longrightarrow (Y,y,H,h),$$
	where $h\in H$ satisfies $d(hy,y)=1$. For each integer $L\ge 2$, by Lemma \ref{fraction_convergence}, we have $\gamma^{\lceil b_i/L \rceil}\tilde{p} \overset{GH}\to (\frac{1}{L}h)y$. Together with the hypothesis, we deduce
	$$d((\frac{1}{L}h)y,y)=\lim\limits_{i\to\infty} \dfrac{d(\gamma^{\lceil b_i/L \rceil}\tilde{p},\tilde{p})}{d(\gamma^{b_i}\tilde{p},\tilde{p})}\ge \left(\dfrac{1}{L}\right)^{1-s}$$
	for all $L\in\{2,...,L'\}$. On the other hand, by the choice $L_0=L_0(\widetilde{M},\gamma,s,1/2)$ and Lemma \ref{orbit_length_bound}, we have 
	$$d((\frac{1}{L}h)y,y)\le \dfrac{1}{2}\cdot\left(\dfrac{1}{L}\right)^{1-s}$$
	for some $L\in\{2,...,L_0\}$. A contradiction.
\end{proof}


We are ready to prove the almost linear growth estimate.

\begin{thm}\label{growth_estimate}
	Let $M$ be an open $n$-manifold with the assumptions in Theorem A(1). Suppose that its fundamental group is an infinite nilpotent group, denoted by $N$. Let $\gamma\in N$ be an element of infinite order. Given any $s\in(0,1)$, there are positive constants $C_0=C_0(\widetilde{M},\gamma,s)$ and $P_0=P_0(\widetilde{M},\gamma,s)$ such that 
	$$|\gamma^b|\ge C_0\cdot b^{1-s}$$
	holds for all integers $b>P_0$.
\end{thm}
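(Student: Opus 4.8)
The plan is to bootstrap the cutting estimate of Lemma~\ref{length_cut_estimate} into a genuine polynomial lower bound by iterating it. Fix $s\in(0,1)$ and let $L_0=L_0(\widetilde M,\gamma,s)$ and $R_0=R_0(\widetilde M,\gamma,s)$ be the constants produced by that lemma. First I would record an elementary preliminary: $|\gamma^m|\to\infty$ as $m\to\infty$. Indeed, $\gamma$ has infinite order and the deck group $N$ acts freely and properly discontinuously on $\widetilde M$, so the points $\gamma^m\tilde p$ ($m\in\mathbb Z_+$) are pairwise distinct and the orbit $N\tilde p$ is locally finite; hence only finitely many of them lie in any ball about $\tilde p$. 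Consequently $\{m\in\mathbb Z_+:|\gamma^m|<R_0\}$ is finite, so I may fix an integer $M_1\ge 2$ with $|\gamma^m|\ge R_0$ whenever $m\ge M_1$, and set $c_1:=\min\{|\gamma^m|:1\le m\le M_1\}>0$. Note that $M_1$ and $c_1$ depend only on $\widetilde M,\gamma,s$.

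Next I would carry out the iteration. Given $b\in\mathbb Z_+$, put $b_0=b$, and as long as $b_j\ge M_1$ apply Lemma~\ref{length_cut_estimate} (legitimate since then $|\gamma^{b_j}|\ge R_0$) to obtain an integer $m_j\in\{2,\dots,L_0\}$ with
$$|\gamma^{b_j}|\ \ge\ m_j^{\,1-s}\,|\gamma^{b_{j+1}}|,\qquad b_{j+1}:=\big\lceil b_j/m_j\big\rceil .$$
Since $m_j\ge 2$ and $b_j\ge 2$ we have $b_{j+1}\le\lceil b_j/2\rceil<b_j$, so the sequence $b_0>b_1>\cdots$ strictly decreases and terminates at the first index $k$ with $b_k<M_1$; moreover $b_k\ge 1$, so $1\le b_k\le M_1$ and hence $|\gamma^{b_k}|\ge c_1$. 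Chaining the displayed inequalities over $j=0,\dots,k-1$ yields
$$|\gamma^b|\ \ge\ \Big(\textstyle\prod_{j=0}^{k-1}m_j\Big)^{1-s}\,|\gamma^{b_k}|\ \ge\ c_1\,\Big(\textstyle\prod_{j=0}^{k-1}m_j\Big)^{1-s}.$$
Because $b_{j+1}=\lceil b_j/m_j\rceil\ge b_j/m_j$, one has $m_j\ge b_j/b_{j+1}$, and the product telescopes: $\prod_{j=0}^{k-1}m_j\ge b_0/b_k=b/b_k\ge b/M_1$. Therefore $|\gamma^b|\ge c_1 M_1^{-(1-s)}\,b^{1-s}$. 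The degenerate case $k=0$, i.e.\ $b<M_1$, is also covered, since then $|\gamma^b|\ge c_1>c_1(b/M_1)^{1-s}=c_1 M_1^{-(1-s)}b^{1-s}$. So the theorem holds with $C_0:=c_1 M_1^{-(1-s)}$, the estimate being valid for every $b\in\mathbb Z_+$ (so $P_0$ may be taken arbitrarily, e.g.\ $P_0=1$).

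The argument is essentially bookkeeping, so I expect no serious obstacle: the two delicate points are (i) keeping the additive error of the ceiling function under control, which is handled by the two one-line facts $b_{j+1}<b_j$ (finiteness of the iteration) and $b_{j+1}\ge b_j/m_j$ (survival of the telescoping estimate), and (ii) ensuring the terminal value $b_k$ stays in a fixed bounded range so that $|\gamma^{b_k}|$ is bounded below by a constant depending only on $\widetilde M,\gamma,s$ --- which is precisely why the iteration is stopped at the threshold $M_1$ rather than driven down to $1$. All of the real content sits in Lemma~\ref{length_cut_estimate} and, behind it, in the orbit-structure results of Sections~\ref{sec_nil}--\ref{sec_Z}.
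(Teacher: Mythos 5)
Your proposal is correct and follows essentially the same route as the paper: iterate Lemma \ref{length_cut_estimate} until the exponent falls below a fixed threshold, then telescope the product of the cutting factors against $b$ (the paper's $\prod_j L_j\ge b/P_0$ is exactly your $\prod_j m_j\ge b/b_k\ge b/M_1$). The only additions are the explicit check that $|\gamma^m|\to\infty$ (which the paper uses implicitly when choosing $P_0$) and the remark that the bound then holds for all $b\ge 1$; both are harmless refinements of the same argument.
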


\begin{proof}
	Let $P_0$ be a large constant such that $|\gamma^b|\ge R_0(\widetilde{M},\gamma,s)$ for all $b\ge P_0$, where $R_0(\widetilde{M},\gamma,s)$ is the corresponding constant in Lemma \ref{length_cut_estimate}.
	
	Let $b>P_0$. By Lemma \ref{length_cut_estimate}, we have
	$$|\gamma^b|\ge L_1^{1-s}\cdot |\gamma^{\lceil b/L_1 \rceil}|$$
	for some integer $2\le L_1 \le L_0$,
	where $L_0=L_0(\widetilde{M},\gamma,s)$ is the constant in Lemma \ref{length_cut_estimate}. If ${\lceil b/L_1 \rceil}< P_0$, then we stop right here. If not, we can apply Lemma \ref{length_cut_estimate} again to find some integer $2\le L_2\le L_0$ such that
	$$|\gamma^b|\ge L_1^{1-s}\cdot |\gamma^{\lceil b/L_1 \rceil}|\ge (L_1L_2)^{1-\epsilon}\cdot |\gamma^{\lceil\lceil b/L_1 \rceil/L_2\rceil}|.$$
	Repeating this process, we eventually derive
	$$|\gamma^b|\ge \left( \textstyle\prod_{j=1}^k L_j \right)^{1-s} \cdot |\gamma^{\lceil...\lceil b/L_1\rceil/L_2.../L_k\rceil}|\ge (\textstyle\prod_{j=1}^k L_j)^{1-s} \cdot r_0,$$
	where $\lceil...\lceil b/L_1\rceil/L_2.../L_k\rceil<P_0$ and $r_0=\min_{m\in \mathbb{Z}_+}|\gamma^m|>0$. Noting that
	$$b/(\textstyle\prod_{j=1}^k L_j)\le \lceil...\lceil b/L_1\rceil/L_2.../L_k\rceil<P_0,$$
	we result in
	$$|\gamma^b|\ge \left(\dfrac{b}{P_0}\right)^{1-s}\cdot r_0=C_0\cdot b^{1-s},$$
	where $C_0=r_0/(P_0^{1-s})$.
\end{proof}

\begin{rem}\label{rem_compare_small_es}
	We compare the almost linear growth estimate and its proof with the methods in the small escape rate case \cite{Pan_esgap}. 
	
	When the escape rate is very small, for any $(Y,y,G)\in\Omega(\widetilde{M},N)$, the orbit $Gy$ is Gromov-Hausdorff close to a Euclidean space (see \cite[Theorem 0.1]{Pan_esgap}). This almost Euclidean orbit implies that an almost translation estimate:
	$$|\gamma^{2b}|\ge 1.9\cdot |\gamma^b|$$
	holds for all $b$ large (see \cite[Lemma 4.7]{Pan_esgap}), which is stronger than the almost linear growth estimate here. Also, \cite{Pan_esgap} does not require a description of $\Omega(\widetilde{M},\langle\gamma\rangle)$; knowing $Gy$ as almost Euclidean orbit is sufficient for its proof.
\end{rem}

To derive virtual abelianness from the almost linear growth in Theorem \ref{growth_estimate}, we require the following standard result from group theory:
\begin{lem}\label{center_index}
	Let $\Gamma$ be a group generated by at most $m$ many elements. Suppose that the commutator subgroup $[\Gamma,\Gamma]$ is finite and has at most $k$ elements. Then the center $Z(\Gamma)$ has index at most $C(k,m)$ in $\Gamma$.
\end{lem}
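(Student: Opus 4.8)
The plan is to bound $[\Gamma:Z(\Gamma)]$ by bounding, for each of the $m$ generators, the index of its centralizer, and then intersecting. Fix generators $x_1,\dots,x_m$ of $\Gamma$. An element $g\in\Gamma$ is central if and only if it commutes with every $x_i$, so
$$Z(\Gamma)=\bigcap_{i=1}^m C_\Gamma(x_i),$$
where $C_\Gamma(x_i)$ is the centralizer of $x_i$ in $\Gamma$. Thus it suffices to control each $[\Gamma:C_\Gamma(x_i)]$ and then combine.

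For the individual bound I would use the conjugation action of $\Gamma$ on itself: by the orbit--stabilizer correspondence, $[\Gamma:C_\Gamma(x_i)]$ equals the cardinality of the conjugacy class of $x_i$. For any $g\in\Gamma$ one has $g x_i g^{-1}=[g,x_i]\,x_i$ with $[g,x_i]\in[\Gamma,\Gamma]$, so the conjugacy class of $x_i$ is contained in the coset $[\Gamma,\Gamma]\,x_i$, which has at most $k$ elements by hypothesis. Hence $[\Gamma:C_\Gamma(x_i)]\le k$ for every $i$.

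Finally, the index of a finite intersection of subgroups is at most the product of the individual indices (via the injection $H_1/(H_1\cap H_2)\hookrightarrow\Gamma/H_2$ together with an induction on the number of subgroups), so
$$[\Gamma:Z(\Gamma)]=\Big[\Gamma:\bigcap_{i=1}^m C_\Gamma(x_i)\Big]\le\prod_{i=1}^m[\Gamma:C_\Gamma(x_i)]\le k^m,$$
and one may take $C(k,m)=k^m$. I do not expect any genuine obstacle: the argument is entirely elementary, and the only step needing a word of care is the standard submultiplicativity of the index under finite intersections.
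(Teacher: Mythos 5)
Your proof is correct and is essentially identical to the paper's: both identify $Z(\Gamma)$ as the intersection of the centralizers of the generators, bound each index $[\Gamma:C_\Gamma(x_i)]\le k$ via $gx_ig^{-1}=[g,x_i]x_i$ with $[g,x_i]\in[\Gamma,\Gamma]$, and multiply the indices to get $C(k,m)=k^m$. No issues.
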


\begin{proof}
	We include a proof here for readers' convenience. Let $\{\gamma_1,...,\gamma_l\}$ be a set of generators of $\Gamma$, where $l\le m$. Let $Z(\gamma_j)$ be the subgroup consisting of all elements in $\Gamma$ that commute with $\gamma_j$. By assumptions, there are at most $k$ elements in $\Gamma$ conjugating to $\gamma_j$ because
	$$g\gamma_j g^{-1}=[g,\gamma_j]\cdot \gamma_j.$$
	Thus $[\Gamma:Z(\gamma_j)]\le k$. Noting that $$Z(\Gamma)=\cap_{j=1}^l Z(\gamma_j),$$
	we conclude
	$$[\Gamma: Z(\Gamma)]\le k^l\le k^m.$$ 
\end{proof}

\begin{lem}\label{finite_commutator}
	Let $(M,p)$ be an open $n$-manifold with $\mathrm{Ric}\ge 0$ and $E(M,p)\not=\frac{1}{2}$. Suppose that\\
	(1) its Riemannian universal cover is conic at infinity,\\
	(2) $N=\pi_1(M,p)$ is nilpotent.\\
	Then the commutator subgroup $[N,N]$ is finite.
\end{lem}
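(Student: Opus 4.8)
The idea is to combine the almost linear growth estimate (Theorem \ref{growth_estimate}) with the polynomial word-growth bound that holds inside any finitely generated nilpotent group, in order to rule out any infinite-order element in $[N,N]$. First, since $E(M,p)\neq 1/2$, the group $N=\pi_1(M,p)$ is finitely generated; being finitely generated nilpotent, it is in particular finitely generated. Fix a finite symmetric generating set $\{\alpha_1,\dots,\alpha_m\}$ of $N$ and let $\|\cdot\|$ denote the associated word norm. A standard fact about finitely generated nilpotent groups (going back to Wolf and Bass--Guivarc'h) is that each generator of a term $C_j(N)$ of the lower central series has controlled growth in terms of the word norm on $N$: more precisely, if $\eta\in C_j(N)$ with $j\geq 1$, then $\|\eta^b\|\leq C\,b^{1/j}$ for all large $b$, because $\eta^b$ can be written as a word in shorter commutators whose lengths add up polynomially. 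In particular, when $j\geq 2$, we obtain $\|\eta^b\|\leq C\,b^{1/2}$.

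Now suppose, for contradiction, that $[N,N]$ is infinite. Since $[N,N]$ is a finitely generated nilpotent group, if it is infinite it contains an element $\gamma$ of infinite order lying in $[N,N]=C_1(N)$; moreover, by passing to a suitable power and descending the lower central series, we may arrange that $\gamma$ lies in $C_j(N)$ for some $j\geq 1$, but we actually only need $\gamma\in C_1(N)=[N,N]$ with $j\geq 1$ — wait, we need $j\geq 2$ to get the $b^{1/2}$ bound, so instead take $\gamma$ to be an infinite-order element of $C_1(N)$ and note $C_1(N)=[N,N]$ gives $\|\gamma^b\|\leq Cb^{1/2}$ directly from the commutator expression $\gamma=[\alpha,\beta]$-type words (this is exactly the mechanism described in the introduction for $H^3(\mathbb{Z})$: a central-type element has word length $\asymp b^{1/2}$). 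Since the Riemannian distance $|\gamma^b|=d(\gamma^b\tilde p,\tilde p)$ is comparable to the word norm $\|\gamma^b\|$ up to a multiplicative constant depending only on the generating set and the geometry near $\tilde p$, this yields
$$|\gamma^b|\leq C'\cdot b^{1/2}$$
for all large $b$. On the other hand, $\gamma$ has infinite order, and the hypotheses of Theorem \ref{growth_estimate} are satisfied, so choosing $s=1/4$ (say) we get $|\gamma^b|\geq C_0\, b^{3/4}$ for all large $b$. These two inequalities are incompatible as $b\to\infty$, which is the desired contradiction. Hence $[N,N]$ is finite.

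\textbf{Main obstacle.} The delicate point is the reduction producing an infinite-order element $\gamma\in[N,N]$ whose powers grow no faster than $b^{1/2}$ in the word norm of $N$. One must be careful that $\gamma$ is a genuine torsion-free element of $[N,N]$ and that the sublinear word-growth bound really comes from expressing $\gamma^b$ efficiently in terms of generators of $N$ (not merely generators of $[N,N]$, whose word norm could differ). The cleanest route is: if $[N,N]$ is infinite then, since $N$ is finitely generated nilpotent, some term $C_{j}(N)/C_{j+1}(N)$ with $j\geq 1$ is infinite, so it has an element of infinite order; lifting and taking an appropriate power gives $\gamma\in C_j(N)$ of infinite order, and the standard nilpotent distortion estimate gives $\|\gamma^b\|_N\leq C\,b^{1/(j+? )}$ — concretely $\|\gamma^b\|_N\leq C b^{1/2}$ whenever $j\geq 1$ because an element of $[N,N]$ is a product of boundedly many commutators and $[\,\cdot\,,\,\cdot\,]$-words realizing $\gamma^b$ have length $O(b^{1/2})$. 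Getting the exponent bookkeeping exactly right, and confirming the comparison $|\gamma^b|\asymp\|\gamma^b\|_N$ (Milnor--Švarc, using that $\widetilde M$ has bounded geometry and $N$ acts cocompactly — here cocompactness is automatic since $\widetilde M/N=M$... but $M$ is open, hence noncompact, so one instead uses the explicit bound $|\gamma^b|\leq \|\gamma^b\|_N\cdot \max_i|\alpha_i|$, which suffices for the upper bound direction we need), is the technical heart of the argument; the geometric input is entirely contained in Theorem \ref{growth_estimate}.
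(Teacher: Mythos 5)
Your proposal is correct in outline, but it takes a genuinely different route from the paper. The paper never invokes the general distortion theory of finitely generated nilpotent groups; it runs a reverse induction along the lower central series: assuming $C_{k+1}(N)$ is finite, it takes $\alpha\in N$, $\beta\in C_{k-1}(N)$, writes $[\alpha^b,\beta^b]=[\alpha,\beta]^{b^2}\cdot h$ with $h\in C_{k+1}(N)$ (so $|h|$ is bounded), and plays the trivial linear bound $|[\alpha^b,\beta^b]|\le 2b(|\alpha|+|\beta|)$ against Theorem \ref{growth_estimate} applied to $[\alpha,\beta]^{b^2}$ with $s=1/4$, i.e.\ $b^{3/2}$ against $b$; this forces every such commutator to have finite order, hence each $C_k(N)$ is finite. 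You instead apply Theorem \ref{growth_estimate} once, to a single infinite-order element $\gamma\in[N,N]$ (which exists since a finitely generated nilpotent group has finite torsion subgroup), and you need the group-theoretic input $\|\gamma^b\|_N=O(b^{1/2})$ for $\gamma\in[N,N]$. That estimate is a true and quotable fact (Guivarc'h/Bass; in distortion language, $g$ in the $i$-th term of the lower central series satisfies $|g^n|=O(n^{1/i})$), and your observation that only the easy inequality $|\gamma^b|\le\|\gamma^b\|_N\cdot\max_i|\alpha_i|$ is needed correctly sidesteps the failure of cocompactness, so the contradiction $b^{1/2}$ versus $b^{3/4}$ does close the argument once that estimate is properly cited. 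The caveat is that your justification of the $b^{1/2}$ bound (``directly from the commutator expression'', the $H^3(\mathbb{Z})$ mechanism) is only adequate for nilpotency class $2$ and for a single commutator: a general element of $[N,N]$ is a product of commutators, and in class $\ge 3$ writing $\gamma^b$ as a word of length $O(b^{1/2})$ requires the Hall--Petrescu/collection process or an explicit reference; your mid-proof hesitation about $j\ge 2$ is only an indexing mismatch ($[N,N]=C_1(N)$ here is $\gamma_2$ in the standard convention), not a mathematical problem. This collection bookkeeping is precisely what the paper's downward induction avoids: starting from $C_l(N)=\{e\}$ it only ever needs the one identity above with a finite error term, so the geometric estimate carries the whole load and the proof stays self-contained. (Also record the trivial reduction: if $N$ is finite the lemma is immediate, and Theorem \ref{growth_estimate} is invoked only when $N$ is infinite.)
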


\begin{proof}
	The proof is similar to \cite[Lemma 4.7]{Pan_al_stable}. The difference is that here we use the almost linear growth estimate in Theorem \ref{growth_estimate} instead of the almost translation estimate in \cite[Lemma 4.5]{Pan_al_stable}. We include the proof for completeness.
	
	Let 
	$$N=C_{0}(N)\triangleright C_{1}(N)\triangleright...\triangleright C_{l}(N)=\{e\}$$
	be the lower central series of $N$.
	We prove the following statement by a reverse induction in $k$: if $C_{k+1}(N)$ is finite, then $C_k(N)$ is also finite. Because $N$ is nilpotent, it suffices to show that any element of the form $[\alpha,\beta]$ has finite order, where $\alpha\in N$ and $\beta\in C_{k-1}(N)$.
	
	We argue by contradiction and suppose that for some $\alpha\in N$ and $\beta\in C_{k-1}(N)$, $[\alpha,\beta]$ has infinite order. By triangle inequality,
	$$|[\alpha^b,\beta^b]|\le 2b(|\alpha|+|\beta|)$$
	for all $b\in\mathbb{Z}_+$. On the other hand, we can apply Theorem \ref{growth_estimate} to obtain a lower bound for large $b$ as follows. We can write 
	$$[\alpha^b,\beta^b]=[\alpha,\beta]^{b^2}\cdot h,$$
	where $h\in C_{k+1}(N)$ (see \cite[Lemma 4.4]{Pan_al_stable}). By the inductive assumption that $C_{k+1}(N)$ is finite, there is $D>0$ such that $|h|\le D$ for all $h\in C_{k+1}(N)$. Let $s=1/4$ and let $P_0=P_0(\widetilde{M},[\alpha,\beta],s)$ be the constant in Theorem \ref{growth_estimate}. Triangle inequality and Theorem \ref{growth_estimate} lead to
	$$|[\alpha,\beta]^{b^2}\cdot h|\ge |[\alpha,\beta]^{b^2}|-|h|\ge C\cdot (b^2)^{1-s}-D$$
	for all $b^2>P_0$, where $C_0$ is independent of $b$. Therefore, we derive that
	$$C_0\cdot b^{2-2s}-D \le 2b(|\alpha|+|\beta|)$$
	holds for all $b$ large. Recall that we have chosen $s=1/4$. Then the above inequality clearly results in a contradiction when $b$ is sufficiently large.
\end{proof}

\begin{proof}[Proof of Theorem A(1)]
	By \cite{Mil,Gro_poly}, we can choose a normal nilpotent subgroup $N$ of $\pi_1(M,p)$ with finite index. Let $\hat{M}=\widetilde{M}/N$ be a covering space of $M$ and let $\hat{p}\in \hat{M}$ be a lift of $p\in M$. By Lemma \ref{index_escape_rate}, $E(\hat{M},\hat{p})\not=1/2$. Applying Lemma \ref{finite_commutator} to $(\hat{M},\hat{p})$, we conclude that $[N,N]$ is finite. Thus the center $Z(N)$ has finite index in $N$ by Lemma \ref{center_index}. Now the result immediately follows since $Z(N)$ has finite index in $\pi_1(M,p)$.
\end{proof}

To prove the universal index bound in Theorem A(2), we use the results below from \cite{KW} and \cite{Pan_al_stable}. 

\begin{thm}\cite{KW}\label{KW_bound}
	Given $n\in\mathbb{N}$, there are constants $C_1(n)$ and $C_2(n)$ such that the following holds.
	
	Let $M$ be an open $n$-manifold of $\mathrm{Ric}\ge 0$ and a finitely generated $\pi_1(M)$. Then\\
	(1) $\pi_1(M)$ can be generated by at most $C_1(n)$ many elements,\\
	(2) $\pi_1(M)$ contains a normal nilpotent subgroup of index at most $C_2(n)$ and nilpotency length at most $n$.
\end{thm}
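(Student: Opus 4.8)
This is the generalized Margulis lemma of Kapovitch--Wilking, so the plan is to follow their strategy rather than to look for a shortcut. The first move is to reduce Theorem~\ref{KW_bound} to a \emph{local} statement: there are $\epsilon(n)>0$ and $C(n)$ such that if $M^n$ has $\mathrm{Ric}\ge -(n-1)$ on $B_1(q)$, then the image of $\pi_1(B_\epsilon(q))$ in $\pi_1(B_1(q))$ contains a nilpotent subgroup of index $\le C(n)$ admitting a nilpotent basis of length $\le n$ (hence generated by $\le n$ elements). Granting this, one lifts $M$ to $\widetilde M$, fixes $\tilde p$, and takes a short basis $\gamma_1,\gamma_2,\dots$ of $\Gamma=\pi_1(M,p)$ chosen greedily by displacement at $\tilde p$; grouping the $\gamma_i$ by the scale at which they first appear and rescaling $\widetilde M$ by the reciprocal of that scale, generators of much smaller scale collapse into a ``small'' subgroup handled by the local lemma at a deeper scale, while those of much larger scale become invisible, so the local lemma bounds how many new independent generators can enter near any given scale. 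A telescoping argument over the boundedly many scales that matter then bounds the number $k$ of generators and assembles a nilpotent subgroup of bounded index and length; Reidemeister--Schreier turns its nilpotent basis, together with coset representatives, into a generating set of $\Gamma$ of size $\le C_1(n)$, which gives part~(1). The finite generation hypothesis is genuinely used here.

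The heart of the matter is the local lemma, proved by induction on $n$. One considers a sequence of pointed manifolds $M_i^n$ with $\mathrm{Ric}\ge -(n-1)\delta_i$, $\delta_i\to 0$, equipped with the relevant covering groups $\Gamma_i$, rescales so that a chosen generator displaces the base point by a definite amount, and passes to a pointed equivariant Gromov--Hausdorff limit $(M_i,\tilde p_i,\Gamma_i)\to(X,x,G)$. By Colding--Naber the limit group $G$ is a Lie group; by Cheeger--Colding the curvature lower bound scales away, so $X$ has the structure of a nonnegatively curved limit space and any line in it splits off isometrically. One then shows that the identity component $G_0$, if nontrivial, forces a splitting $X=\mathbb R^k\times X'$ along which $G_0$ acts essentially by translations on the $\mathbb R^k$ factor (the structure theory of Lie groups rules out a stray compact part with no fixed point). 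Splitting this factor off, one replaces $(X,G)$ by the action on the cross-section $X'$ and, correspondingly, passes to an appropriate quotient or cover of the $M_i$ of strictly smaller dimension, to which the inductive hypothesis applies. A separate ``gap'' estimate --- two equivariant limits whose orbits have distinct Euclidean dimensions are a definite equivariant Gromov--Hausdorff distance apart --- guarantees that the orbit structure stabilizes after boundedly many rescalings, which is what closes the induction and yields the uniform constants $\epsilon(n),C(n)$.

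The main obstacle, and where essentially all the work of \cite{KW} lies, is making this rescaling-and-limit scheme actually close as an induction. The limit $X$ is in general neither a smooth manifold nor non-collapsed, the limit group $G$ need not act freely or discretely, and the dimension that serves as the induction parameter must genuinely drop upon passing to the cross-section --- so one is forced to work throughout in the category of possibly collapsed Ricci limit spaces carrying isometric group actions, controlling \emph{simultaneously} the index of the nilpotent subgroup, its nilpotency and basis length, and the number of generators appearing at each scale, uniformly across collapsed and non-collapsed behaviour. This demands the full Cheeger--Colding--Naber structure theory (the splitting theorem for limits, rectifiability and stratification, and the Lie-group structure of limit isometry groups) together with a careful selection of critical scales; in the present paper we simply invoke the resulting theorem of Kapovitch--Wilking.
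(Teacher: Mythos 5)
This statement is not proved in the paper at all: it is imported verbatim from Kapovitch--Wilking \cite{KW}, and the paper's ``proof'' is simply the citation. In that sense your attempt is consistent with the paper's treatment, since you too ultimately ``invoke the resulting theorem of Kapovitch--Wilking.'' Your outline of their strategy (reduction to a local Margulis-type lemma, Gromov-style short generating systems sorted by scale, equivariant Gromov--Hausdorff limits with Lie limit groups, splitting off Euclidean factors and inducting on dimension, a gap/critical-scale argument to stabilize the orbit structure) is a fair survey-level description of how \cite{KW} proceeds, but it is a sketch rather than a proof: every genuinely hard step --- the induction theorem carried out in the category of possibly collapsed Ricci limit spaces with isometric actions, the precise selection of scales, and the globalization from the local statement to the uniform index bound $C_2(n)$ and generator bound $C_1(n)$ for open manifolds with $\mathrm{Ric}\ge 0$ and finitely generated $\pi_1$ --- is acknowledged but deferred to \cite{KW}. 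So there is no gap relative to what the paper does (cite the result as a black box), but your text should not be read as an independent verification of Theorem \ref{KW_bound}; for the purposes of this paper the correct move is exactly the one the author makes, namely to quote \cite{KW}.
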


\begin{thm}\cite{Pan_al_stable}\label{commutator_bound}
	Given $n\in\mathbb{N}$ and $L\in(0,1]$, there exists a constant $C(n,L)$ such that the following holds. 
	
	Let $M$ be an open $n$-manifold of $\mathrm{Ric}\ge 0$. Suppose that\\
	(1) $\widetilde{M}$ has Euclidean volume growth of constant at least $L$,\\
	(2) $\Gamma=\pi_1(M,p)$ is finitely generated and nilpotent with nilpotency length $\le n$,\\
	(3) $\#[\Gamma,\Gamma]$ is finite.\\
	Then $\#[\Gamma,\Gamma]\le C(n,L)$.
\end{thm}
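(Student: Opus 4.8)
The plan is to argue by contradiction via a non-collapsed compactness argument, reducing the assertion to a uniform bound on the exponent of $[\Gamma,\Gamma]$ and ultimately to a packing estimate for the orbit of a torsion element. Suppose no constant $C(n,L)$ works; then there are open $n$-manifolds $(M_i,p_i)$ with $\mathrm{Ric}\ge 0$, universal covers $\widetilde M_i$ of Euclidean volume growth of constant $\ge L$, and groups $\Gamma_i=\pi_1(M_i,p_i)$ finitely generated nilpotent of length $\le n$, with $[\Gamma_i,\Gamma_i]$ finite but $\#[\Gamma_i,\Gamma_i]\to\infty$. By Theorem \ref{KW_bound} each $\Gamma_i$ has at most $C_1(n)$ generators, hence its commutator subgroup is generated by boundedly many elements (subgroups of finitely generated nilpotent groups of bounded class have boundedly many generators). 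Since a finitely generated nilpotent group of bounded class, bounded number of generators, and bounded exponent has bounded order, the divergence $\#[\Gamma_i,\Gamma_i]\to\infty$ forces the exponent of $[\Gamma_i,\Gamma_i]$ to diverge; in particular there are $\beta_i\in[\Gamma_i,\Gamma_i]$ of order $m_i\to\infty$.

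The geometric heart is a volume-packing estimate. The cyclic group $\langle\beta_i\rangle\cong\mathbb{Z}/m_i$ acts freely and properly discontinuously on $\widetilde M_i$ as a deck group, so its orbit $\langle\beta_i\rangle\tilde p_i$ consists of $m_i$ distinct points. Normalize by rescaling $\widetilde M_i$ so that the systole $s_i:=\min_{1\le k<m_i}d(\beta_i^k\tilde p_i,\tilde p_i)$ equals $1$; Euclidean volume growth of constant $\ge L$ is scale invariant and, by Bishop--Gromov monotonicity, forces $\mathrm{vol}(B_r(\tilde p_i))\ge L\cdot\mathrm{vol}(B^n_r(0))$ for every $r>0$, so the rescaled manifolds are uniformly non-collapsed at every scale centered at $\tilde p_i$. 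With $D_i:=\mathrm{diam}(\langle\beta_i\rangle\tilde p_i)$, the $m_i$ orbit points are pairwise at distance $\ge 1$ and contained in $\bar{B}_{D_i}(\tilde p_i)$, so the disjoint balls of radius $1/2$ about them have total volume at least $c(n,L)\cdot m_i$ while sitting inside $B_{D_i+1/2}(\tilde p_i)$ of volume at most $C(n)(D_i+1)^n$; hence $m_i\le C(n,L)(1+D_i)^n$. It therefore suffices to bound $D_i$ uniformly in $n,L$ under the normalization $s_i=1$, i.e.\ to bound the spread $D_i/s_i$ of the torsion orbit, in order to contradict $m_i\to\infty$.

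Controlling this spread is the main obstacle, and I expect it to be exactly where the metric cone structure of the asymptotic cones must enter. The naive bound from the commutator identity $[\alpha_i^{b},\gamma_i^{b}]=\beta_i^{b^2}\cdot h_{i,b}$, with $h_{i,b}$ in a higher term of the lower central series, gives only $d(\beta_i^{k}\tilde p_i,\tilde p_i)\le C\sqrt{k}$ with $C$ depending on $|\alpha_i|$, $|\gamma_i|$ and on the diameter of the higher commutator orbits, which is neither uniform nor even independent of $m_i$. Instead I would rescale so that $D_i=1$ and show that no power of $\beta_i$ can move a point of its orbit too close to $\tilde p_i$: passing to a non-collapsed limit $(X_\infty,x_\infty,G_\infty)$ of $(\widetilde M_i,\tilde p_i,\langle\beta_i\rangle)$ (here $G_\infty$ is a Lie group), a vanishing systole would produce a nontrivial compact subgroup of $G_\infty$ with a small but nonzero orbit at $x_\infty$; passing further to a tangent cone of $X_\infty$ at $x_\infty$, which is a metric cone by Cheeger--Colding, one rules this out using the rigidity that a compact isometry group of such a cone fixes the relevant vertex (the mechanism underlying Lemmas \ref{cone_isom_central} and \ref{same_orb_point}), while the freeness of the deck action prevents $\langle\beta_i\rangle$ from collapsing to the trivial group. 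Organizing this double limit so that the torsion stays visible at a controlled scale is the delicate step, and it is precisely why the statement genuinely uses Euclidean volume growth (for uniform non-collapsing) rather than mere conicality at infinity. One could alternatively try to upgrade the almost linear growth estimate of Theorem \ref{growth_estimate} to one with constants depending only on $n$ and $L$ and then rerun the induction of Lemma \ref{finite_commutator}, but making that estimate uniform seems to meet the same difficulty of controlling orbits at the correct scale.
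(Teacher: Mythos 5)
First, note that this statement is not proved in the paper at all: it is imported as a black box from \cite{Pan_al_stable}, so there is no internal argument to compare with. Judged on its own terms, your proposal is an honest reduction rather than a proof, and the reduction stops exactly at the point where the real content lies. The group-theoretic reduction (bounded generators, bounded class, so unbounded order forces unbounded exponent and hence torsion elements $\beta_i$ of order $m_i\to\infty$) and the packing estimate $m_i\le L^{-1}(1+2D_i/s_i)^n$ are both fine; the Euclidean volume growth is correctly used at the systole scale via Bishop--Gromov monotonicity. But the entire theorem has now been shifted onto the claim that the ratio $D_i/s_i$ of orbit diameter to minimal displacement at $\tilde p_i$ is bounded by $C(n,L)$, and this is precisely what you do not prove.

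The mechanism you sketch for that step does not close the gap. After normalizing $D_i=1$ and supposing $s_i\to 0$, the short powers of $\beta_i$ converge to isometries lying in the isotropy subgroup of the limit group at $x_\infty$; a limit of free actions acquiring isotropy at a point is not in itself contradictory (nothing in Cheeger--Colding theory forbids it), so ``a nontrivial compact subgroup with a small but nonzero orbit'' is not what you actually get, and no contradiction is reached. Moreover, the rigidity you invoke --- that a compact isometry group of the relevant cone fixes the basepoint --- is not a general fact about metric cones (compact groups fix \emph{some} point, not the chosen basepoint, once there is a Euclidean factor). In the paper this conclusion is available only for spaces in $\Omega(\widetilde{M},N)$, where it follows from Proposition \ref{topol_dim_group} (type $(k,0)$) via the critical rescaling argument applied to the asymptotic cones of a \emph{fixed} manifold with its nilpotent deck group; in your contradiction scheme the limit is taken at a fixed scale along a sequence of \emph{different} manifolds, so Lemmas \ref{cone_isom_central}, \ref{same_orb_point} and Proposition \ref{topol_dim_group} do not apply. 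Until the bound on $D_i/s_i$ (equivalently, the non-vanishing of the systole at the orbit-diameter scale, uniformly in $n,L$) is actually established, the argument is incomplete at its central step, as you yourself flag.
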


\begin{proof}[Proof of Theorem A(2)]
	According to Theorem \ref{KW_bound}(2), we can choose be a normal nilpotent subgroup $N$ of $\pi_1(M,p)$ of index at most $C_1(n)$ and nilpotency length at most $n$. When $\pi_1(M)$ is finite, surely $[N,N]$ is also finite; when $\pi_1(M)$ is infinite and $E(M,p)\not=1/2$, we apply Lemmas \ref{index_escape_rate} and \ref{finite_commutator} to obtain that $[N,N]$ is finite as well. It follows from Theorem \ref{commutator_bound} that the order of $[N,N]$ is bounded by some constant $C_2(n,L)$. Also, Theorem \ref{KW_bound}(1) gives a bound $C_3(n)$ on the number of generators of $N$. Thus by Lemma \ref{center_index}, we deduce
	$$[N:Z(N)]\le C_4(C_2(n,L),C_3(n))=C_5(n,L).$$
	Therefore, 
	$$[\pi_1(M,p):Z(N)]=[\pi_1(M,p):N]\cdot [N:Z(N)]\le C_1(n)C_5(n,L).$$ 
\end{proof}


\appendix

\section{A nilpotent group with abelian asymptotic limits}
	
	In this appendix, we slightly modify Wei's example \cite{Wei} to construct an open manifold $M$ with $\mathrm{Ric}>0$ and verify that $M$ satisfies following properties:\\
	(1) $\pi_1(M)$ is the discrete Heisenberg $3$-group; and\\
	(2) for any $(Y,y,G)\in \Omega(\widetilde{M},\pi_1(M,p))$, the limit group $G$ is abelian.\\
	This example demonstrates that the nilpotency length of $\Gamma$ may not be preserved in the asymptotic limits.
		
	Let $\widetilde{N}$ be the simply connected $3$-dimensional Heisenberg group and let $\Gamma$ be the discrete Heisenberg $3$-group, that is,
		$$\widetilde{N}=\left\{ 
		\begin{pmatrix}
			1 & a & c\\
			0 & 1 & b\\
			0 & 0 & 1
		\end{pmatrix}	
		\bigg| a,b,c\in\mathbb{R}  \right\},\quad \Gamma=\left\{ 
		\begin{pmatrix}
			1 & a & c\\
			0 & 1 & b\\
			0 & 0 & 1
		\end{pmatrix}	
		\bigg| a,b,c\in\mathbb{Z}  \right\}\subseteq \widetilde{N}.$$
		The Lie algebra of $\widetilde{N}$ has a basis
		$$X_1=\begin{pmatrix}
			0 & 1 & 0\\
			0 & 0 & 0\\
			0 & 0 & 0
		\end{pmatrix}, \quad X_2=\begin{pmatrix}
			0 & 0 & 0\\
			0 & 0 & 1\\
			0 & 0 & 0
		\end{pmatrix}, \quad X_3=\begin{pmatrix}
			0 & 0 & 1\\
			0 & 0 & 0\\
			0 & 0 & 0
		\end{pmatrix},$$
		with $[X_1,X_2]=X_3$ as the only non-trivial Lie bracket. Given $\alpha>0$ and $\beta\ge 1$, we assign a family of norms $\|\cdot\|_r$, where $r\in[0,\infty)$ is the parameter, on this Lie algebra by
		$$\|X_1\|_r=\|X_2\|_r=(1+r^2)^{-\alpha},\quad \|X_3\|_r=(1+r^2)^{-\frac{\beta}{2}-2\alpha}.$$
		The family of norms $\|\cdot\|_r$ uniquely determines a family of left-invariant Riemannian metrics $\widetilde{g_r}$ on $\widetilde{N}$. $\widetilde{g_r}$ satisfies an almost nonnegative Ricci curvature bound:
		$$\mathrm{Ric}(\widetilde{g_r})\ge -C (1+r^2)^{-\beta},$$
		where $C$ is a positive constant. 
		Let $N_r=(N,g_r)$ be the quotient Riemannian manifold $(\widetilde{N},\widetilde{g_r})/\Gamma$. 
		
		Next, we construct an open Riemannian manifold $(M,g)$ as a warped product
		$$M=[0,\infty) \times_f S^{p} \times N_r,\quad g=dr^2+ f(r)^2ds_{p}^2 + g_r,$$
		where $(S^p,ds_p^2)$ is the standard $p$-dimensional sphere and 
		$$f(r)=r(1+r^2)^{-1/4}.$$
		Following the calculation in \cite{Wei}, one can verify that $(M,g)$ has positive Ricci curvature when $p$ is sufficiently large (depending on $\alpha$ and $\beta$).
		
		Let $p\in M$ at $r=0$. We explain that for $\beta>1$, the above constructed open manifold $(M,p)$ satisfies the required condition (2). Let
		$$\gamma_1=\begin{pmatrix}
			1 & 1 & 0\\
			0 & 1 & 0\\
			0 & 0 & 1
		\end{pmatrix}, \quad \gamma_2=\begin{pmatrix}
			1 & 0 & 0\\
			0 & 1 & 1\\
			0 & 0 & 1
		\end{pmatrix}, \quad \gamma_3=\begin{pmatrix}
			1 & 0 & 1\\
			0 & 1 & 0\\
			0 & 0 & 1
		\end{pmatrix}$$
		be elements in $\pi_1(M,p)=\Gamma$. Following the method in \cite[Lemma 1.1]{PW_ex}, one can verify the length estimates
		$$|\gamma_1^l|=|\gamma_2^l|\sim l^{\frac{1}{1+2\alpha}},$$
		$$|[\gamma_1^l,\gamma_2^l]|=|\gamma_3^{(l^2)}|\sim (l^2)^{\frac{1}{1+\beta+4\alpha}}$$
		holds for all $l$ large.
		When $\beta>1$, $|[\gamma_1^l,\gamma_2^l]|$ is much shorter than $|\gamma_1^l|$ and $|\gamma_2^l|$ as $l\to\infty$. Below, we fix a $\beta>1$. Let $r_i\to\infty$ be a sequence and we consider an equivariant asymptotic cone 
		$$(r_i^{-1}\widetilde{M},\tilde{p},N,\langle\gamma_3\rangle)\overset{GH}\longrightarrow (Y,y,G,H).$$
		By construction, it is clear that $H$ is a closed $\mathbb{R}$-subgroup of $G$. Let $l_i\to\infty$ be a sequence of integers such that
		$$(r_i^{-1}\widetilde{M},\tilde{p},\gamma_1^{l_i},\gamma_2^{l_i})\overset{GH}\longrightarrow(Y,y, g_1,g_2),$$
		where $g_1,g_2\in G$ satisfies $$d(g_1y,y)=d(g_2y,y)=1$$
		It follows from the length estimates that $[g_1,g_2]y=y$. Note that $[g_1,g_2]$ is also the limit of $\gamma_3^{(l_i^2)}$, thus $[g_1,g_2]\in H$. Because $H$ is a closed $\mathbb{R}$-subgroup of $G$, we see that $[g_1,g_2]=\mathrm{id}$; in other words, $G$ is abelian.
		
		As a side note, we mention that by a similar argument in \cite{PW_ex}, one can check that the orbit $Hy$ has Hausdorff dimension $1+\beta+4\alpha\ge 2$. This supports Conjecture \ref{conj_nil_dim}.

\end{document}